\numberwithin{equation}{subsection}
\newtheorem{lemma}{Lemma}[section]
\newtheorem{proposition}{Proposition}[section]
\newtheorem{theorem}{Theorem}[section]
\newtheorem{corollary}{Corollary}[section]
\newtheorem{Definition}{Definition}[section]
\newtheorem{Remark}{Remark}[section]
\newtheorem{Conjecture}{Conjecture}
\newtheorem*{Claim}{Claim}
\newtheorem{Assumption}{Assumption}
\newcommand{\mR}{\mathbb{R}}
\newcommand{\mZ}{\mathbb{Z}}
\newcommand{\mA}{\mathbb{A}}
\newcommand{\mC}{\mathbb{C}}
\begin{document}
%
%
%
\title[Gan--Gross--Prasad conjecture and its refinement]{On  the Gan--Gross--Prasad conjecture and its refinement
  for $\left(\mathrm{U}\left(2n\right), \mathrm{U}\left(1\right)\right)$
  } 

\author{Masaaki Furusawa}
\address[Masaaki Furusawa]{
Department of Mathematics, Graduate School of Science,
              Osaka Metropolitan University,
         Sugimoto 3-3-138, Sumiyoshi-ku, Osaka 558-8585, Japan
}
\email{furusawa@omu.ac.jp}
\thanks{The research of the first author was supported in part by 
JSPS KAKENHI Grant Number
19K03407, 
22K03235
and by Osaka Central  Advanced Mathematical
Institute (MEXT Promotion of Distinctive Joint Research Center Program JPMXP0723833165).}
%
%
%
\author{Kazuki Morimoto}
\address[Kazuki Morimoto]{
Department of Mathematics, Graduate School of Science, 
Kobe University, 1-1 Rokkodai-cho, Nada-ku, Kobe 657-8501, Japan
}
\email{morimoto@math.kobe-u.ac.jp}
\thanks{The research of the second author was supported in part by
JSPS KAKENHI Grant Number 17K14166, 21K03164
and by
Kobe University Long Term Overseas Visiting Program for Young Researchers Fund.}
%
%
%
%
\keywords{Special values of $L$-functions \and Gan--Gross--Prasad conjecture}
\subjclass[2020]{Primary 11F55, 11F67; Secondary  11F27}
%
%
%
%
%
%
\begin{abstract}
We prove the Gan--Gross--Prasad conjecture for 
$\left(\mathrm{U}\left(2n\right), \mathrm{U}\left(1\right)\right)$ 
in the general case and prove  its refinement, namely the
 Ichino--Ikeda-type explicit formula
for the central $L$-values, 
under certain assumptions. 
By a similar argument, 
we also prove its split analogue, namely the Gan--Gross--Prasad conjecture
and its refinement for 
$\left(\mathrm{GL}_{2n}, \mathrm{GL}_1\right)$ in 
general.
\end{abstract}
%
%
%
\dedicatory{Dedicated to Freydoon Shahidi on the occasion 
of his 77th birthday.}
%
%
%
\maketitle
%
%
%
%
%
%
%
%
%
\section{Introduction}
Gross and Prasad~\cite{GP1,GP2} proclaimed a conjecture
relating
 non-vanishing of
certain period integrals of automorphic forms on orthogonal groups 
to
that of central special values of certain tensor product $L$-functions.
This conjecture has been
 extended to all classical groups and metaplectic groups by Gan, Gross and Prasad~\cite{GGP}. 
Meanwhile Ichino and Ikeda~\cite{II} formulated
a refinement of the Gross--Prasad conjecture which 
predicts a precise formula for the central $L$-value 
in terms of the period integral for irreducible
cuspidal tempered automorphic representations in the co-dimension one
case.
Then this refinement has been extended by Harris~\cite{Ha}, Liu~\cite{Liu2} and Xue~\cite{Xue}.
In the present
 paper, we study the Gan--Gross--Prasad conjecture and
 its refinement formulated by Liu~\cite{Liu2}
 in the case of $\left(\mathrm{U}\left(2n\right), \mathrm{U}\left(1\right)\right)$.
%
%

Let us introduce some notation to state our main results.
%
%
%
%
\subsection*{Notation}
Let $F$ be a number field. 
We denote its ring of adeles by $\mA_F$, which is mostly abbreviated as $\mA$ for simplicity. 
Let $\psi_F$ be a non-trivial character of $\mA \slash F$. 
For a place $v$ of $F$, we denote by $F_v$ the completion of $F$ at $v$ and by $\mathcal{O}_v$ its ring of integers
when $v$ is non-archimedean.

Let $E$ be a quadratic extension field of $F$ and 
$\mA_E$ be its ring of adeles. 
For a place $v$ of $F$, we denote $E \otimes_F F_v$ by $E_v$.
We denote by $\mathcal{O}_{E, v}$ its ring of integers when $v$ is non-archimedean.
For $x\in E$, 
let $x \mapsto \bar{x}$ be the unique non-trivial element of $\mathrm{Gal}(E \slash F)$.
Let us denote by $N_{E \slash F}$ the norm map from $E$ to $F$.
Let $\chi_E$ denote the quadratic character of $\mA^\times$ corresponding to $E \slash F$.
We define a character $\psi_E$ of $\mA_E \slash E$ by 
$\displaystyle{\psi_E(x) = \psi_F \left(\frac{x+\bar{x}}{2} \right)}$.
Since $\psi_E\left(x\right)=\psi_F\left(x\right)$
for $x\in\mA$, we simply write $\psi_E$ as $\psi$.
For any algebraic group $X$ defined over $F$ and a place $v$ of $F$, we often denote by $X_v$ the group of $F_v$-rational 
points $X\left(F_v\right)$ of $X$.
\emph{
Throughout the paper, we fix a character $\Lambda$ of $\mA_E^\times \slash E^\times$ whose
 restriction 
to $\mA^\times$ is trivial.}
%
%
%
\subsection{Unitary group $\mathrm{U}\left(V\right)$}
\label{subsection unitary group}
Let $(V, ( \, , \,)_V )$ be a $2n$-dimensional vector
 space over $E$ with a 
non-degenerate Hermitian pairing $(\, , \,)_V$.
We assume that the Witt index of $V$ is at least $n-1$.
Then we have a Witt decomposition $V = \mathbb{H}^{n-1} \oplus L$
where $\mathbb{H}$ is a hyperbolic plane over $E$ and $L$ is a $2$-dimensional Hermitian space over $E$.
Then $\mathcal{G}_n$ is defined as the set of 
$F$-isomorphism classes of the unitary groups $\mathrm{U}(V)$ for such $V$. We let $\mathrm{U}(V)$ act on $V$ from the left.
By abuse of notation, we shall often identify $\mathrm{U}(V)$ with 
its isomorphism class in $\mathcal{G}_n$.
We may decompose $V$ as a direct sum 
\begin{equation}\label{e: dedomposition of V}
V = X^- \oplus L \oplus X^+
\end{equation}
where $X^{\pm}$ are totally isotropic $(n-1)$-dimensional subspaces of $V$
which are dual to each other and orthogonal to $L$.
We take a basis $\{ e_{-1}, \dots, e_{-n+1} \}$ of $X^{-}$
and a basis
$\{ e_1, \dots, e_{n-1} \}$ of $X^{+}$, respectively so that
\begin{equation}
\label{(2)}
(e_{-i}, e_{j})_V = \delta_{i, j}
\end{equation}
for $1 \leq i, j \leq n-1$,
where $\delta_{i, j}$ denotes Kronecker's delta.
Then we shall write elements of $G$ as  matrices employing 
\[
e_{-1}, \dots, e_{-n+1}, \, \text{basis of $L$,} \, e_{n-1}, \dots, e_1,
\]
in this order, as a basis of $V$.

Let us denote by  $\left(\mathbb{V}_n,\left(\, , \, \right)_{\mathbb V}
\right)$ a $2n$-dimensional
Hermitian space with the Witt index $n$,
which is uniquely determined up to a scaling.
We often abbreviate $\mathbb{V}_n$ to $\mathbb V$.
When $V=\mathbb V$, we extend $X^-$ and $X^+$ to 
$V^-$ and $V^+$, respectively so that $V^-$ and $V^+$ are
totally isotropic $n$-dimensional subspaces of $\mathbb V$
which are dual to each other.
We take $e_{-n}\in V^-$ and $e_{n}\in V^+$,
respectively, so that \eqref{(2)} holds for $1 \leq i,j \leq n$.
Let $\mathbb G_n$ denote the unitary group 
$\mathrm{U}\left(\mathbb V_n\right)$.
We often abbreviate $\mathbb G_n$ to $\mathbb G$.
We employ 
\[
e_{-1}, \dots e_{-n+1}, e_{-n}, e_n, e_{n-1}, \dots, e_1,
\]
in this order, as a basis of $V$ for the matrix representation of 
elements of $\mathbb G_n$.
Then we have
\begin{equation}\label{unitary G_n}
\mathbb{G}_n\left(F\right)= \left\{ h \in \mathrm{GL}_{2n}(E) \colon {}^{t}\bar{h} w_{2n} h = w_{2n} \right\}.
\end{equation}
Here for a positive integer $r$, $w_r$ denotes 
the $r$ by $r$ matrix
\begin{equation}\label{w_r}
w_r =  \begin{pmatrix} &&1\\ &\iddots&\\ 1&& \end{pmatrix}.
\end{equation}
When we need to emphasize the distinction between
 $\mathbb G_n$
and the unitary group $\mathbb G_n^-=\mathrm{U}\left(
\mathbb W_n\right)$ for a skew Hermitian form $\mathbb W_n$
defined later by \eqref{e: def of G_n^-},
we denote $\mathbb G_n$ by $\mathbb G_n^+$ or $\mathbb G^+$.

\subsection{Bessel period}
Suppose $G = \mathrm{U}(V) \in \mathcal{G}_n$.
Let $P^\prime$ be the maximal parabolic subgroup of $G$ preserving 
the isotropic subspace $X^{-}$. 
Then $P^\prime$ has the Levi decomposition
$P^\prime=M^\prime S^\prime$ where 
$M^\prime$ is Levi part given by
\[
 M^\prime = 
 \left\{ \begin{pmatrix}g&&\\ &h&\\ &&g^\ast \end{pmatrix} \colon \, g \in \mathrm{Res}_{E \slash F} \,\mathrm{GL}_{n-1} ,\,  h \in \mathrm{U}(L) \right\}
 \]
with $g^\ast = w_{n-1} {}^{t} \bar{g}^{-1} w_{n-1}$
and $S^\prime$ the unipotent radical of $P^\prime$.

Let us take an anisotropic vector $e\in L$.
Then we define a character $\chi_e$ of $S^\prime\left(\mA\right)$
by
\[
\chi_e \begin{pmatrix} 1_{n-1}&A&B\\ &1_2&A^\prime\\ &&1_{n-1} \end{pmatrix}
\coloneqq \psi\left( \left(Ae, e_{n-1}\right)_V\right).
\]
Let us define a subgroup $\mathrm{St}_e$ of $M^\prime$ by
\[
\mathrm{St}_e\coloneqq
\left\{ \begin{pmatrix}p&&\\ &h&\\ &&p^\ast \end{pmatrix} \colon \, p \in \mathcal{P}_{n-1},\,  h \in \mathrm{U}(L), \, h e = e \right\}
\]
where $\mathcal{P}_{n-1}$ denotes the mirabolic subgroup of $\mathrm{Res}_{E \slash F}\, \mathrm{GL}_{n-1}$, i.e.
\[
\mathcal{P}_{n-1} = \left\{ \begin{pmatrix}\alpha&u\\ &1 \end{pmatrix} \colon \alpha \in \mathrm{Res}_{E \slash F}\, \mathrm{GL}_{n-2},
u \in \mathrm{Res}_{E \slash F}\, \mathbb{G}_a^{n-2} \right\}.
\]
Then $t\in \mathrm{St}_e\left(\mA\right)$
stabilizes $\chi_e$, i.e.
$\chi_e\left(ts^\prime t^{-1}\right)=\chi_e\left(s^\prime\right)$
for any $s^\prime\in S^\prime\left(\mA\right)$.

For a positive integer $r$, let $U_{r}$ denote the group of upper unipotent matrices in $\mathrm{Res}_{E \slash F}\, \mathrm{GL}_{r}$.
For $u \in U_{n-1}$, we define $\check{u} \in P^\prime$ by
\[
\check{u} \coloneqq \begin{pmatrix} u&&\\ &1_2&\\ &&u^\ast \end{pmatrix}.
\]
Then we define a unipotent subgroup $S$ of $P^\prime$ by
\[
S \coloneqq S^\prime S^{\prime \prime}
\quad
\text{where}
\quad
S^{\prime \prime} = \left\{ \check{u} \colon u \in U_{n-1}\right\}
\]
and we extend $\chi_e$ to a character of $S(\mA)$ by putting
\[
\chi_e(\check{u}) = \psi(u_{1, 2} + \cdots + u_{n-2, n-1}) \quad \text{for} \quad u =\left(u_{i,j}\right)\in U_{n-1}(\mA).
\]
We define a subgroup $D_{e}$ of $G$ by
\begin{equation}\label{bessel reductive part}
D_{e} \coloneqq \left\{ \begin{pmatrix}1_{n-1}&&\\ &h&\\ &&1_{n-1} \end{pmatrix} \colon h \in \mathrm{U}(L), \, h e = e \right\}
\end{equation}
and let $R_{e} \coloneqq D_{e} S$.
Then the elements of $D_e(\mA)$ stabilize a character $\chi_e$ of $R_e(\mA)$ by conjugation.
We note that 
\[
D_e(F) \simeq \{ a \in E^\times \colon \bar{a}a =1 \}.
\]
We may regard $\Lambda$ as a character of $D_e(\mA)$ by $d \mapsto \Lambda(\det d)$.
Then we define a character $\chi_{e, \Lambda}$ of $R_e(\mA)$ by
\[
\chi_{e, \Lambda}(ts) \coloneqq \Lambda(t) \chi_e(s) \quad \text{for} \quad t \in D_e(\mA), \, s \in S(\mA).
\]
%
%
%
\begin{Definition}\label{def of bessel}
For a cusp form $\varphi$ on $G(\mA)$, we define the $(e, \psi, \Lambda)$-Bessel period of $\varphi$ by 
\[
B_{e, \psi, \Lambda}(\varphi) \coloneqq \int_{D_e(F) \backslash D_e(\mA)} \int_{S(F) \backslash S(\mA)}
\chi_{e, \Lambda}\left(ts\right)^{-1} \varphi\left(ts\right) \, ds \, dt.
\]
For an irreducible cuspidal automorphic representation $(\pi, V_\pi)$ of $G(\mA)$,
we say that $\pi$ has the $(e, \psi, \Lambda)$-Bessel period when
 $B_{e, \psi, \Lambda}\not\equiv 0$  on $V_\pi$.
\end{Definition}
%
\begin{Remark}
Suppose that $e, e^\prime \in L$ satisfy $(e, e)_V = (e^\prime, e^\prime)_V \ne 0$.
Then 
for an irreducible cuspidal automorphic representation $\pi$ of $G(\mA)$,
we note that  $\pi$ has the $(e, \psi, \Lambda)$-Bessel period
if and only if $\pi$ has the $(e^\prime, \psi, \Lambda)$-Bessel period.
\end{Remark}
%
%
%
\subsection{The Gan--Gross--Prasad conjecture}
Our first main result is the implication in one direction
of the Gan--Gross--Prasad conjecture in the case of $(\mathrm{U}(2n), \mathrm{U}(1))$.
%
%
\begin{theorem}
\label{period to L-value thm}
Let $\pi$ be an irreducible cuspidal automorphic representation
of $G(\mA)$ for $G \in \mathcal{G}_n$ whose local component 
$\pi_w$
at some finite place $w$ is generic. 
Suppose that $\pi$ has the $(e, \psi, \Lambda)$-Bessel period.
Then we have
\[
L \left(\frac{1}{2}, \pi \times \Lambda \right) \ne 0.
\]
\end{theorem}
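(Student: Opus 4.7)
The plan is to establish the implication by constructing a global Rankin--Selberg integral representation of $L(s,\pi\times\Lambda)$ that unfolds to the Bessel period, following the standard approach to the ``easy direction'' of Gan--Gross--Prasad. Specifically, I would set up a zeta integral $Z(s,\varphi,f_s)$ obtained by pairing a cusp form $\varphi\in V_\pi$ against an Eisenstein series $E(h;s,f_s)$ on a suitable auxiliary unitary group $H$ containing $G$, with $E$ induced from a parabolic subgroup whose inducing datum involves the character $\Lambda$. The choice of $H$, its parabolic, and the embedding $G\hookrightarrow H$ is dictated by the requirement that the unfolding of $Z(s,\varphi,f_s)$ produces precisely the Bessel subgroup $R_e$ with the character $\chi_{e,\Lambda}$; a natural candidate is $H=\mathbb{G}_{n+1}$ with an Eisenstein series induced from its Siegel parabolic.

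The key computation is the unfolding. Using the Bruhat decomposition of $H$ relative to the chosen parabolic together with the cuspidality of $\varphi$, one would discard all degenerate double cosets, leaving a single open orbit whose contribution is an integral over $R_e(\mA)\backslash G(\mA)$ of $\varphi$ against a local function built from $f_s$. Factorizing over all places, at almost every $v$ the local zeta integral is computed by a Casselman--Shalika type formula for the unramified Bessel functional to equal $L(s+1/2,\pi_v\times\Lambda_v)$ divided by non-vanishing normalizing $L$-factors, giving a global identity of the shape
\begin{equation*}
Z(s,\varphi,f_s)=\frac{L(s+1/2,\pi\times\Lambda)}{L^S(\cdots)}\cdot\prod_{v\in S}Z_v(s,\varphi_v,f_{s,v})
\end{equation*}
for a finite bad set $S$. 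Since $B_{e,\psi,\Lambda}\not\equiv 0$ on $V_\pi$ by assumption, I would pick a factorizable $\varphi$ with $B_{e,\psi,\Lambda}(\varphi)\ne 0$ and choose $f_s$ so that the finitely many local zeta integrals $Z_v(0,\varphi_v,f_{0,v})$ are non-zero. At the distinguished place $w$ where $\pi_w$ is generic, the existence of a non-zero Bessel functional for generic representations yields this local non-vanishing; at the remaining ramified places a standard density argument in $f_{s,v}$ suffices. Specialising the identity at $s=0$ then forces $L(1/2,\pi\times\Lambda)\ne 0$.

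The main obstacle is the combined unfolding and unramified computation: the matrix analysis must be executed for the specific $\mathrm{U}(1)$-stabilizer configuration rather than the higher-rank Bessel models already treated in the literature, and the normalizing $L$-factors must be tracked so that they are regular and non-vanishing at $s=0$. The genericity hypothesis at $w$ enters only in the final non-vanishing argument, to secure local non-vanishing at a place where the unramified calculation does not apply.
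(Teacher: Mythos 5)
The paper's actual proof is by theta correspondence, not by a Rankin--Selberg integral. Precisely, it uses the Weil representation of the dual pair $(\mathrm{U}(V), \mathbb{G}_m^-)$ and the pull-back computation of Proposition~\ref{pullback whittaker prp}, which shows that the $(e,\psi,\Lambda)$-Bessel period of $\varphi$ on $G$ appears as the inner integral in the Whittaker period of the theta lift $\theta^\phi_{\psi,\chi_\Lambda^\Box}(\varphi)$ to $\mathbb{G}_n^-(\mA)$. Hence the Bessel period nonvanishing makes $\Theta_{V,\mathbb{W}_n}(\pi,\psi,\chi_\Lambda^\Box)$ globally generic, in particular nonzero. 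Crucially, the genericity hypothesis at one finite place $w$ is used for something entirely different from what you propose: it forces the twisted Jacquet module $(\omega_{\tau,\nu})_{N_n,\tau_{N_n,\lambda}}$ to vanish for smaller groups, so the lower theta lifts in the Witt tower vanish and one can invoke Yamana's local--global criterion (Theorem~\ref{yam thm}), which then converts nonvanishing of the first occurrence theta lift into nonvanishing of $L(1/2,\pi\times\Lambda)$.

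Your Rankin--Selberg proposal is a genuinely different route, but as written it has two gaps. First, the construction is speculative: it is not established that a Siegel Eisenstein series on $\mathbb{G}_{n+1}$ paired against $\varphi$ under some embedding $G\hookrightarrow\mathbb{G}_{n+1}$ unfolds to precisely the Bessel subgroup $R_e$ with the character $\chi_{e,\Lambda}$. For $(\mathrm{U}(2n),\mathrm{U}(1))$ the codimension is large and the known integral representations that touch this period (Ginzburg--Jiang--Rallis, Jiang--Zhang) go through automorphic descent, not a single Eisenstein series unfolding; moreover those works require global genericity or a generic $A$-parameter, hypotheses strictly stronger than genericity at a single place, which is itself evidence that the direct Eisenstein series route is not available under the theorem's weaker assumption. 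Second, your use of the genericity at $w$ is misplaced: if $B_{e,\psi,\Lambda}\not\equiv 0$, the local Bessel functionals are automatically nonzero at every place by factorization and uniqueness, so no genericity would be needed to pick data making the ramified local zeta integrals nonvanish — a density argument suffices. The fact that your sketch seems not to need the hypothesis at all, while the theorem as stated does, is a sign that your approach does not match the mechanism of the proof. In the paper's method, that hypothesis is exactly what makes the theta-lift tower argument close, and it has no analogue in your outline.
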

%
%
%
%
\begin{Remark}
When $\pi$ is globally generic, this direction of the
Gan--Gross--Prasad conjecture has been
 proved by Ginzburg, Jiang and Rallis~\cite{GJR} 
when the base change lift of $\pi$ is cuspidal and  then by Ichino and Yamana~\cite[Theorem~5.1]{IY} in general.
Meanwhile
 Jiang and Zhang~\cite[Theorem~5.7]{JZ} proved this direction when $\pi$ has a generic $A$-parameter.
In their case there exists an irreducible cuspidal globally generic representation of $\mathbb{G}(\mA)$ 
which is nearly equivalent to $\pi$,
thanks to the global descent method,
 and in particular, our assumption on local genericity is satisfied.
We note that in a recent preprint, Hazeltine, Liu and Lo~\cite[Theorem~1.8]{HLL} proved that a local $A$-packet
is tempered if and only if the packet contains a generic member,
 in the case of symplectic groups and also in
 the case of odd special orthogonal groups. 
We note that if a similar result holds for $G(F_w)$,  then our assumption is equivalent to the one
that $\pi$ has a generic $A$-parameter.
\end{Remark}
\color{black}
%
When $\pi$ is tempered, we may
 prove the following theorem
as predicted in \cite[Conjecture~24.1]{GGP}. 
\begin{theorem}
\label{opp GGP}
Let $\pi$ be an irreducible cuspidal tempered automorphic representation
of $G(\mA)$ for $G \in \mathcal{G}_n$.
Let us fix a triple $\left(e, \psi,\Lambda\right)$.

Then the following two conditions are equivalent :
\begin{enumerate}
\item $L \left(\frac{1}{2}, \pi \times \Lambda \right) \ne 0$.
\item There exists a pair
$\left(G^\prime, \pi^\prime\right)$
where $G^\prime \in \mathcal{G}_n$ and 
$\pi^\prime$ an irreducible 
cuspidal tempered automorphic representation  of $G^\prime(\mA)$
such that $\pi^\prime$ is nearly equivalent to $\pi$
and $\pi^\prime$ has the $(e, \psi, \Lambda)$-Bessel period.
\end{enumerate}
Also the following two conditions are equivalent :
\begin{enumerate}
\item[(a)]
$L \left(\frac{1}{2}, \pi \times \Lambda \right) \ne 0$
and $\operatorname{Hom}_{R_{e,v}}
\left(\pi_v,\chi_{e, \Lambda, v}\right) \ne 0$
at any place $v$ of $F$.
\item[(b)]
 $\pi$ has the $(e, \psi, \Lambda)$-Bessel period.
\end{enumerate}
\end{theorem}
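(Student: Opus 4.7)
The plan is to derive Theorem \ref{opp GGP} from two ingredients: Theorem \ref{period to L-value thm}, and the Ichino--Ikeda--Liu type explicit formula for the $(e,\psi,\Lambda)$-Bessel period of a tempered cusp form, which is the refinement proved elsewhere in this paper. That formula expresses the absolute square of a factorizable Bessel period of $\varphi\in V_\pi$ in terms of the central value $L(1/2,\pi\times\Lambda)$, normalized by ratios of adjoint $L$-values, times a product of local Bessel functionals $\alpha_v$ on matrix coefficients. A key feature in the tempered setting is that $\alpha_v\neq 0$ precisely when $\operatorname{Hom}_{R_{e,v}}(\pi_v,\chi_{e,\Lambda,v})\neq 0$.

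Granting this, the refined equivalence (a) $\Leftrightarrow$ (b) is almost immediate. For (b) $\Rightarrow$ (a), non-vanishing of the global Bessel period forces each local Hom to be non-zero by factorization; meanwhile, since a tempered $\pi$ has generic unramified local components at almost every place, Theorem \ref{period to L-value thm} applies and yields $L(1/2,\pi\times\Lambda)\neq 0$. Conversely, for (a) $\Rightarrow$ (b), the hypothesis that each local Hom is non-zero makes every $\alpha_v$ non-zero; combined with $L(1/2,\pi\times\Lambda)\neq 0$, the explicit formula produces a factorizable $\varphi$ whose Bessel period does not vanish.

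For the first equivalence, (2) $\Rightarrow$ (1) follows from (b) $\Rightarrow$ (a) applied to $\pi^\prime$, since $\pi$ and $\pi^\prime$ being nearly equivalent share the same weak base change to $\operatorname{GL}_{2n}(\mA_E)$ and hence the same central $L$-value. The converse (1) $\Rightarrow$ (2) relies on the local Gan--Gross--Prasad conjecture for unitary groups in the tempered case, due to Beuzart-Plessis, Xue and others: for every place $v$ there is a unique member $\pi^\prime_v$ of the tempered local Vogan $L$-packet of $\pi_v$, sitting on the appropriate pure inner form of $G$, with $\operatorname{Hom}_{R_{e,v}}(\pi^\prime_v,\chi_{e,\Lambda,v})\neq 0$. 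Arthur's multiplicity formula for unitary groups then guarantees that the collection $\{\pi^\prime_v\}_v$ assembles into a global tempered cuspidal representation $\pi^\prime$ of some $G^\prime\in\mathcal{G}_n$ nearly equivalent to $\pi$, and applying (a) $\Rightarrow$ (b) to $\pi^\prime$ completes the proof.

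The principal obstacle is the second ingredient itself, i.e.\ the Ichino--Ikeda--Liu formula established earlier under the standing assumptions of the paper; once that is available, the remaining work is a soft combination of local GGP and Arthur's endoscopic classification, together with the local-temperedness-based nonvanishing criterion for $\alpha_v$.
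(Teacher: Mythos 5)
Your proposal has two genuine gaps.

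First, you invoke the Ichino--Ikeda--Liu explicit formula (Theorem~\ref{refined ggp thm}) as the engine driving (a) $\Rightarrow$ (b), but that formula is only proved in this paper under heavy additional hypotheses ($F$ totally real, $E$ totally imaginary, discrete series at all archimedean places, etc.), whereas Theorem~\ref{opp GGP} is stated and proved with no such restrictions. The paper's actual proof of (a) $\Rightarrow$ (b) bypasses the explicit formula entirely: it first establishes the existence of the auxiliary $(G^\prime,\pi^\prime)$ of condition (2), then observes that $\pi_v$ and $\pi^\prime_v$ lie in the same Vogan $L$-packet, and concludes $\pi = \pi^\prime$ by Beuzart-Plessis' local uniqueness (only one member of each tempered Vogan packet carries a given Bessel model) together with multiplicity one. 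That argument is purely local-packet combinatorics and works unconditionally.

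Second, and more seriously, your (1) $\Rightarrow$ (2) step asserts that ``Arthur's multiplicity formula guarantees that the collection $\{\pi^\prime_v\}_v$ assembles into a global tempered cuspidal representation.'' This is precisely the nontrivial point, not a black box. Arthur's multiplicity formula gives a sign condition that a tuple of local members of an $A$-packet must satisfy to be automorphic; it does not follow automatically that the tuple ``the unique local member with the Bessel model at each $v$'' satisfies this sign condition, nor that this tuple lands on a pure inner form belonging to $\mathcal{G}_n$. Verifying this compatibility is essentially a global statement about $\epsilon$-factors that is the substance of the theorem. The paper avoids the abstract assembly issue altogether: it uses the Ginzburg--Rallis--Soudry descent to produce a globally generic $\pi^\circ$ on $\mathbb{G}_n^-(\mA)$ nearly equivalent to $\pi$, then chooses the local Hermitian spaces $V^\prime_v$ via $\epsilon$-dichotomy for the local theta correspondence (Harris--Kudla--Sweet, Gan--Ichino, and Paul at real places), checks that the Hasse-invariant product is trivial precisely because $L(1/2,\pi\times\Lambda)\neq 0$ forces $\prod_v\epsilon(1/2,\pi^\circ_v\times\Lambda_v,\psi_v^{-1})=1$, and then takes $\pi^\prime = \Theta(\pi^\circ)$ on the resulting $\mathrm{U}(V^\prime)$. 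The non-vanishing and cuspidality of this global theta lift (Yamana) and the pull-back computation of Whittaker periods (Proposition~\ref{pullback whittaker prp}) then give the Bessel period concretely. So your outline is a legitimate heuristic for why the theorem ought to be true, but the claimed ``soft combination of local GGP and Arthur's classification'' is exactly the gap the theta-theoretic construction is designed to fill.
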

%
%
%
\begin{Remark}
The first equivalence was proved also in Jiang and Zhang~\cite[Theorem~6.10]{JZ}
under the assumption that \cite[Conjecture~6.8]{JZ} holds for $\pi$.
\end{Remark}
%
%
%
We prove Theorems~\ref{period to L-value thm} and
\ref{opp GGP}
by computing the pull-back of the
Whittaker periods with respect to  the 
theta lifting from $G$ to $\mathbb{G}_n^-$.
This is an adaptation of the method used
in the proof of \cite[Theorem~1]{FM1} and  \cite[Corollary~1]{FM1}
to our situation.
%
%
%
\subsection{Refinement of the Gan--Gross--Prasad conjecture}
\label{intro ref ggp}
We may
prove the refinement of the Gan--Gross--Prasad conjecture
for $(\mathrm{U}(2n), \mathrm{U}(1))$, i.e.
the Ichino--Ikeda-type central $L$-value formula,
utilizing an explicit formula for Whittaker periods as in
\cite{FM2}.
In order to state our result, let us introduce  some additional notation.

Let $(\pi, V_\pi)$ be an irreducible tempered cuspidal automorphic representation
of $G\left(\mathbb A\right)$ for $G\in\mathcal G_n$.
Let us take the Tamagawa measures $dg$ on $G(\mA)$ 
and $dt$ on $D_e(\mA)$, respectively. Then we note that 
\[
\mathrm{Vol} \left(G(F) \backslash G(\mA), dg \right) = \mathrm{Vol} \left(D_e(F) \backslash D_e(\mA), dt \right) = 2.
\]

Let $\langle\, \,,\,\,\rangle$ denote the $G\left(\mathbb A\right)$-invariant
Hermitian inner product on $V_\pi$
given by the Petersson inner product, i.e.
\[
\langle\varphi_1,\varphi_2\rangle=
\int_{G\left(F\right)\backslash G\left(\mathbb A\right)}
\varphi_1\left(g\right)\,\overline{\varphi_2\left(g\right)}\,
dg
\quad\text{for $\varphi_1,\varphi_2\in V_\pi$.}
\]
Since $\pi=\otimes_v \,\pi_v$ where $\pi_v$ is unitary
for each place $v$ of $F$, we may 
choose a $G(F_v)$-invariant Hermitian
inner product $\langle\,\,,\,\,\rangle_v$ on $V_{\pi_v}$, 
the space of $\pi_v$,  
so that we have
 \[
 \langle\varphi_1 ,\varphi_2\rangle=\prod_v\,\langle\varphi_{1,v} ,\varphi_{2,v}\rangle_v
 \quad\text{for}\quad
\varphi_i=\otimes_v\,\varphi_{i,v}\in V_\pi\quad\left(i=1,2\right).
 \]
We choose a local Haar measure 
$dg_v$ on $G(F_v)$ at each place $v$ of $F$
so that $\mathrm{Vol}\left(G(\mathcal{O}_v),dg_v\right)=1$
at almost all finite places $v$.
Let us also choose a local Haar measure $dt_v$
on $D_{e,v}=D_e\left(F_v\right)$ at each place $v$
so that $\mathrm{Vol}\left(D_e(\mathcal{O}_v),dt_v\right)=1$
at almost all finite places $v$.
We define positive constants $C_G$ and $C_e$,
called Haar measure constants in \cite{II},  by
%
%
%
\begin{equation}\label{e: measure comparison}
dg=C_G\cdot\prod_v \,dg_v\quad\text{and}\quad
dt=C_e\cdot \prod_v\, dt_v,
\end{equation}
respectively.
%
%
\subsubsection{Local Bessel period}
\label{local integral sec}
At each place $v$, a local Bessel period
$\alpha_v\left(\varphi_v,\varphi_v^\prime\right)$ for smooth vectors
$\varphi_v,\,\varphi_v^\prime\in V_{\pi_v}$ is defined as follows.
%

Suppose that $v$ is non-archimedean.
We  define $\alpha_v\left(\varphi_v,\varphi^\prime_v\right)$ by
\begin{equation}\label{e: local integral 1}
\alpha_v\left(\varphi_v,\varphi_v^\prime\right)
\coloneqq
\int_{D_{e, v}}\int_{S_v}^{\mathrm{st}}
\langle\pi_v\left(s_vt_v\right)\varphi_v,\varphi_v^\prime\rangle_v\,
\chi_{e, \Lambda}^{-1}\left(s_v t_v\right)\, ds_v\,dt_v.
\end{equation}
Here, the integral over $S_v =S(F_v)$ is the stable integral 
defined as in \cite[Definition~2.1,Remark~2.2]{LM}.
Indeed it is shown in Liu~\cite{Liu2} that for any $t_v\in D_{e, v}$
the inner integral of \eqref{e: local integral 1} stabilizes at a certain open compact subgroup of $S_v$
 \cite[Proposition~3.1]{Liu2} and 
the outer integral of \eqref{e: local integral 1} converges \cite[Theorem~2.1]{Liu2}.
%

Now suppose that $v$ is archimedean.
For $u=\left(u_{i,j}\right)\in S_v$, we put
\[
u_{i} = u_{i, i+1} \quad (1 \leq i \leq n-2), \quad u_{n-1} = \left(u e, e_{n-1}\right)_V.
\]
For $\gamma \geq - \infty$, we define 
\[
S_{v, \gamma} =  \left\{ u \in S_v \colon |u_i| \leq e^\gamma \, (1 \leq i \leq n-1) \right\}.
\]
Then Liu~\cite[Corollary~3.13]{Liu2} showed that 
\[
\alpha_{\varphi_v, \varphi_v^\prime}(u)\coloneqq
\int_{D_{e, v}} \int_{S_{v, -\infty}} \langle \pi_v\left(us_vt_v\right)\varphi_v,\varphi_v^\prime\rangle_v \, ds_v \, dt_v
\]
converges absolutely and it gives a tempered distribution on $S_v \slash S_{v, -\infty} \simeq E_v^n$.

For an abelian Lie group $N$ over $E_v$, we denote by $\mathcal{D}(N)$ (resp. $\mathcal{S}(N)$)
the space of tempered distributions (resp. Schwartz functions) on $N$.
Then we have a natural
bilinear pairing $(\,, \,) \colon \mathcal{D}(N) \times \mathcal{S}(N) \rightarrow \mC$.
We define the Fourier transform $\hat{} \colon \mathcal{D}(N)  \rightarrow \mathcal{D}(N) $ by the formula
\[
\left(\hat{\mathfrak{a}}, \phi\right) = \left(\mathfrak{a}, \hat{\phi}\right) 
\quad\text{for $\mathfrak{a}\in\mathcal{D}(N)$
and $\phi \in \mathcal{S}(N)$}
\]
where $\hat{\phi}$ is the Fourier transform of $\phi$.

Let $\left(\widehat{S_v \slash S_{v, -\infty}}\right)^{\rm reg}$ be the regular locus of the
Pontryagin dual $\widehat{S_v \slash S_{v, -\infty}}$. We note that $\left(\widehat{S_v \slash S_{v, -\infty}}\right)^{\rm reg}$
is an open dense subset of $\widehat{S_v \slash S_{v, -\infty}}$.
For $\phi \in \mathcal{S} \left(\left(\widehat{S_v \slash S_{v, -\infty}}\right) \right)$, we define 
its  Fourier transform $\widehat{\phi}\,$  by 
\[
\widehat{\phi}(n) = \int_{\left(\widehat{S_v \slash S_{v, -\infty}}\right)^{\rm reg}} \,\phi\left(\widehat{u}\,\right) \,\widehat{u}(n)\, d \widehat{u}
\]
with the dual measure $d \widehat{u}$ of $ds_v$ on $S_v \slash S_{v, -\infty}$. In this way, we may regard the Fourier transform of 
$\mathcal{S} \left(\left(\widehat{S_v \slash S_{v, -\infty}}\right) \right)$ as a subspace of $\mathcal{S}(S_v \slash S_{v, -\infty})$.
Then, by \cite[Proposition~3.14]{Liu2}, there exists a smooth function $f_{\varphi_v, \varphi_v^\prime} \in  \mathcal{S} \left(\left(\widehat{S_v \slash S_{v, -\infty}}\right) \right)$
such that 
\[
\left(\widehat{\alpha_{\varphi_v, \varphi_v^\prime}}, \phi\right) = \int_{\left(\widehat{S_v \slash S_{v, -\infty}}\right)^{\rm reg}}\, f_{\varphi_v, \varphi_v^\prime}(t)\, \phi(t) \, dt
\]
for any $\phi \in C_c^\infty \left(\left(\widehat{S_v \slash S_{v, -\infty}}\right)^{\rm reg} \right)$.
In this sense, the Fourier transform $\widehat{\alpha_{\varphi_v, \varphi_v^\prime}}$
is smooth on $\left(\widehat{S_v \slash S_{v, -\infty}}\right)^{\rm reg}$.

Then we may define an archimedean local Bessel period
$\alpha_v\left(\varphi_v,\varphi_v^\prime\right)$ by
\begin{equation}\label{e: local integral 2}
\alpha_v\left(\varphi_v,\varphi_v^\prime\right) \coloneqq \widehat{\alpha_{\varphi_v, \varphi_v^\prime}} \left( \chi_{e, \Lambda, v} \right)
\end{equation}
since $ \chi_{e, \Lambda, v} \in \left(\widehat{S_v \slash S_{v, -\infty}}\right)^{\rm reg}$.
%

We recall that the local multiplicity one property holds at
any place $v$, i.e.
\begin{equation}
\label{uniqueness}
\dim_{\mathbb C}\,\operatorname{Hom}_{R_{e,v}}
\left(\pi_v,\chi_{e, \Lambda, v}\right)\le 1.
\end{equation}
We refer \eqref{uniqueness}
to  Gan, Gross and Prasad~\cite[Corollary~15.3]{GGP}
and Jiang, Sun and Zhu~\cite[Theorem~A]{JSZ}
for the non-archimedean case and the archimedean case, respectively.
Moreover when $v$ is not-split, it is shown by
Beuzart-Plessis~\cite{BP1,BP2}  that
\begin{multline}\label{e: multiplicity one}
\dim_{\mathbb C}\,\operatorname{Hom}_{R_{e,v}}
\left(\pi_v,\chi_{e, \Lambda, v}\right)= 1
\\
\Longleftrightarrow
\text{$\alpha_v\left(\varphi_v,\varphi_v^\prime\right)\ne 0$
for some $\varphi_v,\,\varphi_v^\prime\in V_{\pi_v}^\infty$}
\end{multline}
where
$V_{\pi_v}^\infty$ denotes the space of smooth vectors in $V_{\pi_v}$.
As we remarked in \cite[(1.11)]{FM2}, the 
condition on the right-hand side 
of \eqref{e: multiplicity one} is equivalent to:
\begin{equation}\label{e: multiplicity one2}
\text{$\alpha_v\left(\varphi_v,\varphi_v\right)\ne 0$
for some
$\varphi_v\in V_{\pi_v}^\infty$.}
\end{equation}
%
%
%
%
\subsubsection{Normalization of local integrals}
\label{section: normalization}
We fix maximal compact subgroups $K_G\coloneqq\prod_v K_{G,v}$ 
of $G\left(\mA\right)$ and $K_e\coloneqq\prod_v K_{e,v}$
of $D_e\left(\mA\right)$
such that $K_{G,v}=G(\mathcal{O}_v)$ and $K_{e, v} = D_e(\mathcal{O}_v)$
for almost all finite places $v$.

We say that a place $v$ is  \emph{good}
(with respect to $\pi$ and a decomposable vector $\varphi=\otimes_v\,\varphi_v
\in V_\pi=\otimes_v\, V_{\pi_v}$)
if the following conditions are satisfied:
%
\begin{subequations}\label{e: good place}
\begin{equation}
\text{$v$ is non-archimedean and is not lying over $2$};
\end{equation}
\begin{equation}
\text{$E_v$ is an unramified quadratic extension of $F_v$
or $E_v=F_v\oplus F_v$};
\end{equation}
\begin{equation}
\text{$K_{G,v}$ is a hyperspecial maximal compact subgroup of $G(F_v)$};
\end{equation}
\begin{equation}
\text{$\pi_v$ is an unramified representation of $G(F_v)$};
\end{equation}
\begin{equation}
\text{$\phi_v$ is a $K_{G,v}$-fixed vector such that $\langle\varphi_v,
\varphi_v\rangle_v=1$ and $\chi_{e, \Lambda, v}$ is $K_{e, v}$-fixed};
\end{equation}
\begin{equation}
\text{$K_{e, v}\subset K_{G,v}$
and $\mathrm{Vol}\left(K_{G,v},dg_v\right)=\mathrm{Vol}\left(K_{e, v},
dt_v\right)=1$}.
\end{equation}
\end{subequations}
%
Then Liu's theorem~\cite[Theorem~2.2]{Liu2} implies that when
$v$ is good, one has
\begin{equation}\label{e: liu's theorem}
\alpha_v\left(\varphi_v,\varphi_v\right)=
\frac{L\left(1/2,\pi_v \times \Lambda_v \right)
\,\prod_{j=1}^{2n}L\left(j, \chi_{E, v}^j \right)}{
L\left(1,\pi_v, \mathrm{Ad}\right)
L\left(1,\chi_{E,v}\right)}.
\end{equation}

We define the normalized  local Bessel period
$\alpha_v^\natural\left(\varphi_v,\varphi^\prime_v\right)$
at each place $v$ of $F$
by
\begin{equation}\label{e: normalized}
\alpha_v^\natural\left(\varphi_v,\varphi^\prime_v\right)\coloneqq
\frac{
L\left(1,\pi_v, \mathrm{Ad}\right)
L\left(1,\chi_{E,v}\right)}
{L\left(1/2,\pi_v \times \Lambda_v \right)
\,\prod_{j=1}^{2n}L\left(j, \chi_{E, v}^j \right)}\cdot
\alpha_v\left(\varphi_v,\varphi_v^\prime\right).
\end{equation}
Here, local $L$-factors are defined as follows.
For each place $v$ of $F$, let $\phi_{\pi_v}$ denote
 the local Langlands parameter of $\pi_v$ given by \cite{KMSW,Mok}.
Then we define 
\[
L\left(s,\pi_v,\mathrm{Ad}\right)  \coloneqq L\left(s,\phi_{\pi_v},\mathrm{Ad}\right). 
\]
Since  
\[
L\left(s,\phi_{\pi_v},\mathrm{Ad}\right) = L\left(s, \mathrm{BC} (\phi_{\pi_v}),\mathrm{As}^-\right),
\]
we have 
\begin{equation}
\label{Lfct bc as}
L\left(s,\pi,\mathrm{Ad}\right) = L\left(s,\Pi,\mathrm{As}^-\right)
\end{equation}
where $\Pi$ denotes the base change lift of $\pi$ to $\mathrm{GL}_{2n}(\mA_E)$
and $\mathrm{As}^-$ stands for the Asai $L$-function.
The existence of $\Pi$ follows from \cite{KMSW} and \cite{Mok}.
We know that  $L\left(s,\pi,\mathrm{Ad}\right)$ has a meromorphic continuation to the entire complex plane $\mC$
and is holomorphic and non-zero at $s=1$ by \cite[Corollary~2.5.9]{Mok} and \cite[Theorem~5.1]{Sha0}.
Meanwhile we may define $L\left(s,\pi_v \times \Lambda_v \right)$ 
by the $\gamma$-factors defined by the
doubling method as in Lapid and Rallis~\cite{LR2}.
By the uniqueness of $\gamma$-factors, we have 
\[
L\left(s,\pi_v \times \Lambda_v \right) = L\left(s,\phi_{\pi_v} \times \Lambda_v \right),
\]
which is holomorphic for $\mathrm{Re}\left(s\right)>0$ by Yamana~\cite{Yam}
since $\pi_v$ is tempered.
%
%
%
\subsubsection{Refinement of the Gan--Gross--Prasad conjecture}
Lapid and Mao~\cite{LM} 
conjectured the Ichino--Ikeda-type explicit formula
for the Whittaker periods.
See Conjecture~\ref{main conj} 
in Section~\ref{LM-conjecture}
for the case we need in this paper.
Under certain assumptions,
the second author proved Conjecture~\ref{main conj}
in \cite{Mo2}.
The refined Gan--Gross--Prasad
conjecture for the Bessel period $B_{e,\psi,\Lambda}$
follows by combining it with the pull-back computation of 
the Whittaker periods of the theta lift of $\pi$
to $\mathbb{G}_n^-\left(\mA\right)$ as in
\cite{FM2}.

Indeed we prove the following theorem.
%
%
%
\begin{theorem}
\label{refined ggp thm}
Let $(\pi, V_\pi)$ be an irreducible cuspidal tempered automorphic representation of $G(\mA)$ where $G \in \mathcal{G}_n$.
Assume the following conditions:
\begin{itemize}
\item  $F$ is totally real and $E$ is a  totally imaginary quadratic extension of $F$,
\item $\pi_v$ is a discrete series representation at every archimedean place $v$ of $F$,
\item $\pi_v, \psi_v$ and $\Lambda_v$ are unramified at all finite split places.
\item   every finite place $v$ is not split when $v$ divides $2$.
\end{itemize}
Then for any non-zero decomposable vector $\varphi = \otimes \varphi_v \in V_\pi$, we have 
\begin{multline}
\label{formula uni}
\frac{|B_{e, \psi, \Lambda}(\varphi)|^2}{\langle \varphi, \varphi \rangle} = \frac{C_e}{\left|S(\Psi(\pi))\right|} 
\,
\left(\prod_{j=1}^{2n} L(j, \chi_{E}^j) \right)
\\
\times \frac{L(\frac{1}{2}, \pi \times \Lambda)}{L\left(1, \pi, \mathrm{Ad} \right) L(1, \chi_{E})}
\,
\prod_v \frac{\alpha_v^\natural(\varphi_v)}{\langle \varphi_v, \varphi_v \rangle}
\end{multline}
where $\Psi(\pi)$ denote the $A$-parameter of $\pi$
and $S(\Psi(\pi))$ its $S$-group.
Here we use the notation
\begin{equation}\label{e: abbreviation}
\alpha_v^\natural\left(\varphi_v\right)=\alpha_v^\natural\left(\varphi_v,\varphi_v\right)
\quad\text{for $\varphi_v\in V_{\pi_v}$}.
\end{equation}
\end{theorem}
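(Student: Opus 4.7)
The plan is to deduce Theorem \ref{refined ggp thm} from the explicit Lapid--Mao-type formula for Whittaker periods on $\mathbb{G}_n^-(\mA)$, transferring the result from the theta lift of $\pi$ back to $\pi$ itself. This is the strategy already employed in \cite{FM2}, now adapted to the $(\mathrm{U}(2n),\mathrm{U}(1))$ setting that is the subject of this paper.

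First, I would set up the dual-pair theta correspondence between $G = \mathrm{U}(V)$ and $\mathbb{G}_n^-$, with Weil representation twisted by a splitting character built out of $\Lambda$ and $\chi_E$. Choose a decomposable Schwartz function $\Phi = \otimes_v \Phi_v$ and form the global theta lift $\theta_\Phi(\varphi)$ of $\varphi \in V_\pi$. Combining Theorem \ref{opp GGP} with the local--global theta correspondence for unitary groups, one checks that under the hypotheses of Theorem \ref{refined ggp thm} the theta lift $\sigma := \theta(\pi)$ is an irreducible cuspidal tempered globally generic representation of $\mathbb{G}_n^-(\mA)$. Unfolding the Whittaker coefficient of $\theta_\Phi(\varphi)$ against the theta kernel, using the Bessel subgroup structure, yields an identity
\[
W_\psi\bigl(\theta_\Phi(\varphi)\bigr) = B_{e,\psi,\Lambda}(\varphi) \cdot \mathcal{W}(\Phi)
\]
for an explicit functional $\mathcal{W}$ in $\Phi$. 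Taking absolute squares and applying the Rallis inner product formula yields a global identity of the shape
\[
\frac{\bigl|W_\psi(\theta_\Phi(\varphi))\bigr|^2}{\langle \theta_\Phi(\varphi), \theta_\Phi(\varphi)\rangle}
= \frac{\bigl|B_{e,\psi,\Lambda}(\varphi)\bigr|^2}{\langle \varphi, \varphi\rangle}\cdot \prod_v \mathcal{I}_v(\Phi_v),
\]
with $\mathcal{I}_v$ an explicit local quantity depending only on $\Phi_v$ and a matrix coefficient of $\pi_v$.

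The next step is to apply the Whittaker period formula of Conjecture \ref{main conj} to $\sigma$ on $\mathbb{G}_n^-$, which holds under the hypotheses of Theorem \ref{refined ggp thm} by \cite{Mo2}. It expresses the left-hand side above as a product of $L$-values of $\sigma$ times a product of normalized local Whittaker integrals of matrix coefficients of $\sigma_v$. Using the compatibility of local theta lifting with the Langlands parameters provided by \cite{KMSW,Mok}, the adjoint and standard $L$-factors of $\sigma$ are expressible in terms of $L(1/2,\pi\times\Lambda)$, $L(1,\pi,\mathrm{Ad})$, $L(1,\chi_{E/F})$ and the product $\prod_{j=1}^{2n} L(j,\chi_E^j)$, matching exactly the $L$-factors on the right-hand side of \eqref{formula uni}. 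The combinatorial factor $|S(\Psi(\pi))|$ arises by tracking the difference between the $S$-groups of $\pi$ and $\sigma$ under the theta correspondence, while the Tamagawa constant $C_e/2$ arises from the measure comparison \eqref{e: measure comparison} and the Tamagawa volume of $D_e(F)\backslash D_e(\mA)$.

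The main obstacle is the local matching: at each place $v$ one must show that $\mathcal{I}_v(\Phi_v)$ together with the normalized local Whittaker integral for $\sigma_v$ combine to give the normalized local Bessel integral $\alpha_v^\natural(\varphi_v)/\langle \varphi_v,\varphi_v\rangle_v$. This amounts to a local seesaw identity relating the local Bessel distribution of \cite{Liu2}---including the stable integral at non-archimedean $v$ and the Fourier transform regularization at archimedean $v$---to the local Whittaker distribution of the theta lift. At a good place the identity reduces via Liu's formula \eqref{e: liu's theorem} and the unramified theta computation to a routine $L$-factor manipulation, but at ramified non-archimedean places and at archimedean places this requires new local analysis that parallels and extends the archimedean arguments of \cite{FM2}. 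Once the local identity is in place at every $v$, assembling the global identity above with Conjecture \ref{main conj} for $\sigma$ and reconciling the measure and $S$-group constants produces the formula \eqref{formula uni}.
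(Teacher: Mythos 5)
Your proposal follows essentially the same route as the paper: theta lift from $G$ to $\mathbb{G}_n^-$, pull-back of Whittaker periods, Rallis inner product, Lapid--Mao explicit formula via \cite{Mo2}, and reduction to a local seesaw identity. However, two points deserve scrutiny.

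First, the displayed identity
\[
W_\psi\bigl(\theta_\Phi(\varphi)\bigr) = B_{e,\psi,\Lambda}(\varphi)\cdot\mathcal{W}(\Phi)
\]
is not correct as stated, and the subsequent ``global identity of the shape'' does not follow by simply taking absolute squares. The actual content of Proposition~\ref{pullback whittaker prp} is that the Whittaker period of the theta lift equals an \emph{integral}
\[
\int_{R^\prime_{e_\lambda}(\mA)\backslash G(\mA)}\omega_{\psi,\chi}(g,1)\phi(\ldots)\,B_{e_\lambda,\psi,\chi_V^{-1}}\bigl(\pi(g)\varphi\bigr)\,dg,
\]
i.e.\ the Bessel period appears inside a $g$-integration together with the Weil representation, and does not split off as a global scalar factor. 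Consequently the left side $|W_\psi(\theta_\Phi(\varphi))|^2/\langle\theta_\Phi(\varphi),\theta_\Phi(\varphi)\rangle$ is not immediately of the form $|B(\varphi)|^2/\langle\varphi,\varphi\rangle$ times a product of purely local factors. The nontrivial step---which the paper carries out via its Proposition~\ref{prp: local equality} and Lemma~\ref{l: main lemma}, defining the auxiliary integrals $W_v(\varphi_v;\phi_v)$, $Z_v^\circ$, $\mathcal{W}_v$ and $\mathcal{B}_{\sigma_v}$---is to massage the global double integral into a form where one can cancel the $|W_v(\varphi_v;\phi_v)|^2$ against $Z_v^\circ\cdot I_v$ at each place and be left with $\alpha_v^\natural$. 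This is precisely the content you gesture at with ``local seesaw identity,'' but your intermediate global formula is not a valid staging post toward it; you would need to replace it with the actual reduction.

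Second, you do not address the case $B_{e,\psi,\Lambda}\equiv 0$ on $V_\pi$. The theorem asserts the formula for \emph{all} nonzero decomposable vectors, and when the Bessel period vanishes identically the theta lift $\sigma$ to $\mathbb{G}_n^-$ is not $\psi_{N_n,\lambda_e}$-generic (indeed may be zero), so your pipeline does not start. In this situation both sides of \eqref{formula uni} must be shown to vanish, which the paper proves as a separate lemma using the local Gan--Gross--Prasad conjecture of Beuzart-Plessis and the endoscopic classification. This case needs to be dispatched before assuming genericity of $\sigma$.

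Apart from these two points the strategy is the correct one and matches the paper, including the role of \cite{Mo2} for the Whittaker formula on $\mathbb{G}_n^-$ under the stated hypotheses, the use of Liu's stable/Fourier-regularized local Bessel integrals, and the reconciliation of Haar-measure constants and $S$-group sizes.
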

%
%
%
%
\begin{Remark}
Here we note that \eqref{formula uni} is consistent with \cite[Conjecture~2.5]{Liu2}.
Let us define a $D_e(\mA)$-invariant inner product $\langle \,\,, \,\,\rangle_\Lambda$ of 
$\Lambda$
by the Petersson inner product with respect to 
the Tamagawa measure on $D_e(\mA)$. 
At each place $v$, we take the $D_e(F_v)$-invariant Hermitian inner product 
$\langle \,\,, \,\,\rangle_{\Lambda_v}$ given by 
$\langle a, b\rangle_{\Lambda_v} = a\bar{b}$ with $a, b \in \mC$.
Then we have
\[
\langle\, \,,\, \,\rangle_\Lambda = 2 \cdot \prod_v \langle\, \,, \,\,\rangle_{\Lambda_v}
\]
since the volume of $D_e(F) \backslash D_e(\mA)$ is equal to $2$.
Let $\Psi(\Lambda)$ denote the $A$-parameter of $\Lambda$.
Since $\left|S(\Psi(\Lambda))\right| =2$, we may rewrite \eqref{formula uni}
as 
\begin{multline*}
\frac{|B_{e, \psi, \Lambda}(\varphi)|^2}{\langle \varphi, \varphi \rangle \langle \Lambda, \Lambda \rangle_\Lambda } 
= \frac{C_e}{\left|S(\Psi(\pi))\right| \cdot \left|S(\Psi(\Lambda))\right|} 
\,\left(\prod_{j=1}^{2n} L(j, \chi_{E}^j) \right)
\\
\times \frac{L(\frac{1}{2}, \pi \times \Lambda)}{L\left(1, \pi, \mathrm{Ad} \right) L(1, \chi_{E})}
\prod_v \frac{\alpha_v^\natural(\varphi_v)}{\langle \varphi_v, \varphi_v \rangle  \langle \Lambda_v, \Lambda_v\rangle_{\Lambda_v}}
\end{multline*}
where we regard $\Lambda$ as an automorphic form on $D_e(\mA)$.
Thus \eqref{formula uni} is consistent with \cite[Conjecture~2.5]{Liu2}.
\end{Remark}
%
%
%
%
\begin{Remark}
Recently the Gan--Gross--Prasad conjecture and
its refinement in the co-dimension one
unitary group case has been  proved
in  remarkable papers  by 
Beuzart-Plessis, Liu, Zhang and Zhu~\cite{BPLZZ}
and Beuzart-Plessis, Chaudouard and Zydor~\cite{BPCZ},
in the stable case and in the endoscopic case, respectively.
Note that our $\left(\mathrm{U}\left(2n\right),
\mathrm{U}\left(1\right)\right)$ case
is not included  in the co-dimension one case unless $n=1$.
\end{Remark}
%
%
%
\begin{Remark}
After the submission of
 this paper, Beuzart-Plessis and Chaudouard 
posted an outstanding  paper \cite{BPC} on arXiv, in which they 
prove the
Gan--Gross--Prasad conjecture and its refinement 
for Bessel periods on unitary groups in general
by establishing  the Jacquet--Rallis relative trace formulas.
Though the main results of our paper are subsumed 
in theirs as a special case, the authors believe that
the different approach taken here has its own merit 
because of its simplicity and straightforwardness.
\end{Remark}
%
%
%
By an argument similar to the 
 proof of Theorem~\ref{refined ggp thm},
we may prove a split analogue, i.e. the 
refined Gan--Gross--Prasad conjecture in the case of $(\mathrm{GL}_{2n}, \mathrm{GL}_1)$.
Since an explicit formula for the Whittaker periods on $\mathrm{GL}_m$ holds in general by Lapid and 
Mao~\cite[Theorem~4.1]{LM},
we do not need to impose any assumption,
contrary to Theorem~\ref{refined ggp thm}.
In fact we have the following theorem where we use notation
similar to that in 
Theorem~\ref{refined ggp thm}. 
We refer to  Section~\ref{s:RGGP GL} for the precise definitions
in the split setting.
\begin{theorem}
\label{refined GGP GL}
Let $(\pi, V_\pi)$ be an irreducible cuspidal tempered automorphic representation of $\mathrm{GL}_{2n}(\mA)$
and $\eta$ a character of $\mA^\times \slash F^\times$.
Let us fix a decomposition of the measure $dg$ on $\mathrm{GL}_1(\mA)$
as $dg= C_1 \prod_v dg_v$ and also 
fix a decomposition of the
Petersson inner product $\langle \,\,, \,\,\rangle = \prod_v \langle \,\,, \,\,\rangle_v$.

Then for any non-zero decomposable vector $\varphi = \otimes \varphi_v \in V_\pi$, we have
\begin{multline}\label{refined ggp gl}
\frac{|B_{\psi, \eta}(\varphi)|^2}{\langle \varphi, \varphi \rangle} 
=C_1 \left( \prod_{j=2}^n \zeta_F(j) \right) 
\\
\times
\frac{L \left( \frac{1}{2}, \pi \times \eta \right) L \left( \frac{1}{2}, \pi^\vee \times \eta^{-1} \right)}{L(1, \pi, \mathrm{Ad})\,\mathrm{Res}_{s=1} \zeta_F(s)}
\prod_v \frac{\alpha_v^\natural (\varphi_v) }{\langle \varphi_v, \varphi_v \rangle_v}
\end{multline}
where $B_{\psi, \eta}\left(\varphi\right)$
denotes the $\left(\psi,\eta\right)$-Bessel period
of $\varphi$ defined by \eqref{e: def of bessel for GL}.
\end{theorem}
\subsection{Organization of the paper}
This paper is organized as follows.
In Section~\ref{s:Set up}, we introduce the set up.
In Section~\ref{Pull-back computation 1}, we compute the pull-back of Whittaker periods concerning the theta lift for the dual pair
$\left(G,\mathbb{G}^-\right)$.
In Section~\ref{s:proof thm}, we prove the Gan--Gross--Prasad conjecture, 
Theorem~\ref{period to L-value thm} and
Theorem~\ref{opp GGP},
based on the pull-back computation in Section~\ref{Pull-back computation 1}.
In Section~\ref{s:proof ref}, by fine-tuning the argument in Section~\ref{s:proof thm}, we prove the
refined Gan--Gross--Prasad conjecture in the case of 
$(\mathrm{U}(2n), \mathrm{U}(1))$.
In Section~\ref{s:RGGP GL}, by an argument similar to those in Section~\ref{Pull-back computation 1} and \ref{s:proof thm}, we shall prove the
refined Gan--Gross--Prasad conjecture in the case of $(\mathrm{GL}_{2n}, \mathrm{GL}_1)$.
In Appendix~\ref{s:Similitude unitary groups case}, by adapting the arguments for the proof
of Theorem~\ref{refined ggp thm},
we settle the refined Gan--Gross--Prasad conjecture
in the similitude unitary group case under certain assumptions,
which is indispensable 
in our forthcoming paper~\cite{FM3}
on the Gross--Prasad conjecture with its refinement
for $\left(\mathrm{SO}\left(5\right),\mathrm{SO}\left(2\right)\right)$
and the generalized B\"ocherer conjecture.
%
%
%
%

%
%
%
%
%
%
%
%
%
%
%
%
%
%
%
%
%
%
%
%
\section{Set up}
\label{s:Set up}
\subsection{The group $\mathbb{G}_n^-$}
\label{subsection skew hermtian}
For a positive integer $n$, let $(\mathbb{W}_n, (\, ,\, )_{\mathbb{W}_n})$ be a  $2n$-dimensional vector space over $E$ with non-degenerate skew-Hermitian pairing $(\, , \,)_{\mathbb{W}_n}$
whose Witt index is $n$, 
which is  determined uniquely up to a scaling.
We let its isometry group $\mathrm{U}(\mathbb{W}_n)$
act on $\mathbb W_n$ from the right.
We denote $\mathrm{U}(\mathbb{W}_n)$ by $\mathbb{G}_n^-$.
We  fix a basis of $\mathbb{W}_n$ 
so that the matrix representation of $\mathbb{G}_n^-$ 
is given by
\begin{equation}\label{e: def of G_n^-}
\mathbb{G}_n^-(F)= \left\{ g \in \mathrm{GL}_{2n}(E) \colon {}^{t}\bar{g} J_n g = J_n \right\}
\quad
\text{where}
\quad
J_n = \begin{pmatrix} &1_n\\ -1_n&\end{pmatrix}.
\end{equation}
Let  $\xi\in E^\times$
such that $\bar{\xi}=-\xi$.
Then
$\xi\cdot\left(\, , \,\right)_{{\mathbb W}_n}$
is a Hermitian pairing of Witt index $n$.
We note that hence we have
\begin{equation}\label{pm iso}
\mathbb G_n^-\simeq \mathbb G_n
\end{equation}
where $\mathbb G_n$ is defined by \eqref{unitary G_n}.

For $a \in \mathrm{Res}_{E \slash F}\, \mathrm{GL}_n$, we define $\hat{a} \in \mathbb{G}_n^-$ by
\begin{equation}\label{gl_n part}
\hat{a} \coloneqq \begin{pmatrix}a\\& {}^{t}\bar{a}^{-1} \end{pmatrix}
\end{equation}
and let
\begin{equation}\label{levi}
M_n \coloneqq \{ \hat{a} \colon a \in \mathrm{Res}_{E \slash F}\, \mathrm{GL}_n \}.
\end{equation}
For $b \in \mathrm{Herm}_n \coloneqq \{ X \in \mathrm{Res}_{E \slash F}\,\mathrm{Mat}_{n \times n} \colon {}^{t}\overline{X} = X \}$,
we define 
\begin{equation}\label{unipotent part}
n(b) \coloneqq \begin{pmatrix} 1_n&b\\ &1_n\end{pmatrix} \in \mathbb{G}_n^-
\end{equation}
and let 
\begin{equation}\label{unip}
U_{\mathbb{G}_n^-} \coloneqq  \{ n(b) \colon b \in \mathrm{Herm}_n \}.
\end{equation}
Then $M_n U_{\mathbb{G}_n^-}$ is the Siegel parabolic subgroup of $\mathbb{G}_n^-$.
%
%
%
%
%
%
%
%
%
\subsection{Weil representation and theta correspondence}
\label{ss:weil rep}
Let us briefly recall the Weil representation of unitary groups. 
Let $(X, (\, , \,)_X)$ be an $r$-dimensional Hermitian space over $E$,
and let  $(Y, (\,, \,)_Y)$ be an $s$-dimensional skew-Hermitian space over $E$.
Then we may define the symplectic space 
\[
\left(W_{X, Y}, (\,, \,)_{X, Y}\right) \coloneqq 
\left(\mathrm{Res}_{E \slash F}\left(X \otimes_E Y\right), \mathrm{Tr}_{E \slash F}\left((\,, \, )_X \otimes \overline{(\,,\,)_Y} \,\right) \right).
\] 
This is a $2rs$-dimensional
symplectic space over $F$.
We let its isometry group $\mathrm{Sp}\left( W_{X, Y}\right)$ act on $W_{X, Y}$ from the right. 
Then a homomorphism 
$
\mathrm{U}(X) \times \mathrm{U}(Y) \rightarrow \mathrm{Sp}\left( W_{X, Y}\right)
$ is
given by 
\begin{equation}
\label{dual pair action}
(x \otimes y)(g, h) := g^{-1}x \otimes yh
\quad\text{for $x \in X, y \in Y, g \in \mathrm{U}(X),h \in \mathrm{U}(Y)$}.
\end{equation}

For each place $v$ of $F$, we denote the metaplectic extension of $\mathrm{Sp}\left(W_{X, Y}\right)(F_v)$
by $\mathrm{Mp}\left( W_{X, Y}\right)(F_v)$. Also we denote
by $\mathrm{Mp}\left( W_{X, Y}\right)(\mA)$ the 
metaplectic extension of $\mathrm{Sp}\left(W_{X, Y}\right)(\mA)$.

Let $\chi=\left(\chi_X,\chi_Y\right)$ be a pair of characters of 
$\mA_E^\times \slash E^\times$
such that $\chi_{X}|_{\mA^\times} = \chi_E^r$ and $\chi_{Y}|_{\mA^\times} = \chi_E^s$.
For each place $v$ of $F$, let 
\begin{equation}
\label{local splitting}
\iota_{\chi_v} \colon \mathrm{U}(X)(F_v) \times \mathrm{U}(Y)(F_v) \rightarrow \mathrm{Mp}(W_{X, Y})(F_v)
\end{equation}
be the local splitting given by Kudla~\cite{Ku},
which  depends on $\chi_v =(\chi_{X,v}, \chi_{Y,v})$.
Using this local splitting, we get a global splitting 
\[
\iota_{\chi} \colon \mathrm{U}(X)(\mA) \times \mathrm{U}(Y)(\mA) \rightarrow \mathrm{Mp}(W_{X, Y})(\mA),
\]
which depends 
 on $\chi = (\chi_X, \chi_Y)$. Then by pulling back, we obtain the Weil representation $\omega_{\psi, \chi}$ of $\mathrm{U}(X)(\mA) \times \mathrm{U}(Y)(\mA)$.
When we fix a polarization $W_{X, Y} = W_{X, Y}^+ \oplus  W_{X, Y}^-$, we may realize  $\omega_{\psi, \chi}$
so that its space of smooth vectors is given by $\mathcal{S}(W_{X, Y}^+(\mA))$, the space of Schwartz-Bruhat functions
 on $W_{X, Y}^+(\mA)$.
Then for $\phi \in \mathcal{S}(W_{X, Y}^+(\mA))$, we define the theta function $\theta_{\psi, \chi}^\phi$ on 
$\mathrm{U}(X)(\mA) \times \mathrm{U}(Y)(\mA)$ by
\begin{equation}
\label{theta fct def}
\theta_{\psi, \chi}^\phi\,(g, h) \coloneqq \sum_{w \in W_{X, Y}^+(F)} \omega_{\psi, \chi}(g, h)\phi(w).
\end{equation}
Let $(\sigma, V_\sigma)$ be an irreducible cuspidal automorphic representation of $\mathrm{U}(X)(\mA)$.
Then for $\varphi \in V_\sigma$ and $\phi \in \mathcal{S}(W_{X, Y}^+(\mA))$, we define the theta lift of $\varphi$ by 
\[
\theta_{\psi, \chi}^\phi(\varphi)(h) \coloneqq \int_{\mathrm{U}(X)(F) \backslash \mathrm{U}(X)(\mA)} \varphi(g) \theta_{\psi, \chi}^\phi(g, h) \, dg,
\]
which is an automorphic form on  $\mathrm{U}(Y)(\mA)$.
Further, 
the theta lift of $\sigma$ to $\mathrm{U}\left(Y\right)\left(\mA\right)$,
which we denote by $\Theta_{X, Y}(\sigma, \psi, \chi)$,
is defined by
\[
\Theta_{X, Y}(\sigma, \psi, \chi)\coloneqq
\left<
\theta^\phi_{\psi, \chi}(\varphi) \colon\varphi \in V_\sigma,\, \phi \in \mathcal{S}(W_{X, Y}^+(\mA))
\right>
\]
where the right-hand side means an automorphic representation
of $\mathrm{U}\left(Y\right)\left(\mA\right)$
generated.
When the spaces $X$ and $Y$ which we consider are evident, we simply write $\Theta(\sigma, \psi, \chi)$ for 
$\Theta_{X, Y}(\sigma, \psi, \chi)$.
Conversely, for an irreducible cuspidal automorphic representation $\tau$ of $\mathrm{U}(Y)(\mA)$, its theta lift
$\Theta_{Y, X}(\tau, \psi, \chi) $
to $\mathrm{U}\left(X\right)\left(\mA\right)$
is similarly defined
and we may simply write it as $\Theta(\tau, \psi, \chi)$.

Let us fix a place $v$ of $F$. 
Let $\omega_{\psi_v, \chi_v}$ be the Weil representation of $\mathrm{U}(X)(F_v) \times \mathrm{U}(Y)(F_v)$
defined as above by using the local splitting \eqref{local splitting}.
For an irreducible admissible representation $\pi$ of $\mathrm{U}(X)(F_v)$ (resp. $\mathrm{U}(Y)(F_v)$),
the maximal $\pi^\vee$-isotypic quotient of $\omega_{\psi_v, \chi_v}$ is of the form
\begin{equation}
\label{local theta def form}
\pi^\vee \boxtimes \Theta_{X, Y}(\pi, \psi_v, \chi_v)
\end{equation}
where $\Theta(\pi, \psi_v, \chi_v) = \Theta_{X, Y}(\pi, \psi_v, \chi_v)$ is a smooth representation of $\mathrm{U}(Y)(F_v)$ (resp. $\mathrm{U}(X)(F_v)$).
Let $\theta(\pi, \psi_v, \chi_v) = \theta_{X, Y}(\pi, \psi_v, \chi_v)$ denote the maximal semisimple quotient of $\Theta(\pi, \psi_v, \chi_v)$.
Then $\theta(\pi, \psi_v, \chi_v)$ is either zero or irreducible
by
the Howe duality, which is  proved by Howe~\cite{Ho1} at archimedean places, by 
 Waldspurger~\cite{Wa} at odd finite places and finally by Gan and Takeda~\cite{GT} at all finite places.
 \begin{Remark}
\label{IP rem}
In the case when $F_v =\mR$ and $E_v = \mC$,
we shall use results by Ichino~\cite{Ich20} and Paul~\cite{Pau} 
on local theta correspondences.
Here we would like to remark about the
relation between our local theta correspondence and theirs.

From the definition, the theta lift in \cite{Ich20} is  $\theta(\pi^\vee, \psi_v, \chi_v)$ in our notation,
which is isomorphic to $\theta(\pi, \psi_v^{-1}, \chi_v^{-1})^\vee$
by Gan, Kudla and Takeda~\cite[Proposition~15.10]{GKT}.

On the other hand, Paul~\cite{Pau} studies the
theta lifts between genuine representations of covering groups of
$\mathrm{U}(X)(F_v)$ and $\mathrm{U}(Y)(F_v)$.
In order to apply 
the results in \cite{Pau},
we need to lift representations of $\mathrm{U}(X)(F_v)$ and $\mathrm{U}(Y)(F_v)$ to their covering groups.
We obtain an explicit relation between our theta lifts and those
in \cite{Pau} by considering twists by certain genuine characters,
as explained carefully in the discussion after Lemma~3.5
in Xue~\cite{Xue23} (see also Atobe~\cite[Section~3.3, 3.4]{HA}).
In particular, thanks to the relation \cite[(3.16)]{Xue23}, we may  apply the results in \cite{Pau} directly
to our case. Note that Xue~\cite{Xue23} uses
the same local theta lift  as in \cite{Ich20}.
\end{Remark}

%
%
%
%
%
%
%
%
%
%
%
%
%
%
%
%
%
%
%
%
%
%
%
%
%
%
%
%
%
\section{Pull-back computation of the Whittaker periods}
\label{Pull-back computation 1}
%
For positive integers $m$ and $n$, 
we consider $\mathbb G_m^-$ and
$G= \mathrm{U}(V) \in \mathcal{G}_n$.
We take characters $\chi_{V}$ and $\chi_{\mathbb{W}}$ of $\mA_E^\times \slash E^\times$
such that $\chi_{V}|_{\mA^\times}=\chi_{\mathbb{W}}|_{\mA^\times} =1$.
We may realize the Weil representation $\omega=\omega_{\psi, \chi}$, where $\chi = (\chi_V, \chi_{\mathbb W})$,
of $G(\mA) \times \mathbb{G}_m^{-}(\mA)$
with $\mathcal{S}(V(\mA)^m)$
as its space of smooth vectors.
The actions on $\phi \in \mathcal{S}(V(\mA)^m)$
are explicitly given by,
\begin{align}
\label{weil rep}
\notag \omega(g,1) \phi(x_1, \dots, x_m) &=\chi_V(\det g)  \phi(g^{-1} (x_1, \dots, x_m)), \\
\omega (1, \hat{a}) \phi(x_1, \dots, x_m)&= \chi_{\mathbb{W}}(\det a) |\det a|^{m} \phi((x_1, \dots, x_m) a),\\
\notag \omega(1, n(b)) \phi(x_1, \dots, x_m)& = \psi\left(\frac{1}{2} \,\mathrm{tr}(b\, \mathrm{Gr}(x_1, \dots, x_m)) \right) \phi\left(x_1, \dots, x_m\right),
\end{align}
where $g\in G\left(\mA\right)$,
$a \in \mathrm{GL}_m(\mA_E)$ and $b \in \mathrm{Herm}_m(\mA)$.
Here $\mathrm{Gr}(x_1, \dots, x_m)$
stands for the Gram matrix 
$ \left(\left( x_i, x_j \right)_V\right)_{1\le i,j\le m}$ and $g (x_1, \dots, x_m)\coloneqq( g x_1, \dots,  g x_m)$
for $g\in G\left(\mA\right)$ and $\left(x_1,\dots , x_m\right)
\in V\left(\mA\right)^m$.

Let $(\pi, V_\pi)$ be an irreducible cuspidal automorphic representation of $G(\mA)$. In this section, we study the theta lift $\Theta_{V, \mathbb{W}_m}(\pi, \psi, \chi)$ 
of $\pi$ to $\mathbb{G}_m^-(\mA)$ and the Whittaker period 
on $\Theta_{V, \mathbb{W}_m}(\pi, \psi, \chi)$.

Let $N_m$ be the unipotent subgroup of $\mathbb{G}_m^-$ defined by 
\[
N_m = \left\{ \hat{u}\, n(b) \colon u \in U_m, \, b \in \mathrm{Herm}_m\right\}\]
where we recall
\eqref{gl_n part} and \eqref{unipotent part}.
We also denote by $T_m$ the subgroup consisting of diagonal matrices in $\mathbb{G}_m^-$.
Then $T_m N_m$ is 
a Borel subgroup of $\mathbb{G}_m^-$.

For $\lambda\in F^\times$, a non-degenerate character
$\psi_{N_m,\lambda} \colon N_m\left(\mA\right)\to\mathbb C^\times$
is defined by 
\begin{equation}\label{generic character}
\psi_{N_m, \lambda}(u) = \psi \left( \sum_{i=1}^{m-1}u_{i, i+1} + \frac{\lambda}{2}\, u_{m, 2m}\right)
\quad
\text{for $u=\left(u_{i,j}\right)\in N_m$}.
\end{equation}

For an automorphic form $\varphi$ on $\mathbb{G}_m^-(\mA)$, we define the
$\psi_{N_m, \lambda}$-Whittaker period of $\varphi$
by
\begin{equation}\label{whittaker period}
W_{\psi, \lambda}(\varphi) \coloneqq \int_{N_m(F) \backslash N_m(\mA)} \varphi(u)\, \psi_{N_m, \lambda}(u)^{-1} \, du.
\end{equation}

For an irreducible cuspidal automorphic representation $(\sigma, V_\sigma)$ of $\mathbb{G}_m^-(\mA)$,
we say that $\sigma$ is $\psi_{N_m, \lambda}$-generic (or simply, generic)
if $W_{\psi,\lambda}\not\equiv 0$  on $V_\sigma$.
Now, recall that $L$ is the
$2$-dimensional hermitian space over $E$ with the decomposition \eqref{e: dedomposition of V}
as defined in Section~\ref{subsection unitary group}.
%
%
%
\begin{proposition}
\label{pullback whittaker prp}
Let $(\pi, V_\pi)$ be an irreducible cuspidal automorphic representation of $G(\mA)$ with $G = \mathrm{U}(V) \in \mathcal{G}_n$.
\begin{enumerate}
\item Suppose that $m>n$. Then $W_{\psi, \lambda} \equiv 0$ on $\Theta_{V, \mathbb{W}_m}(\pi, \psi, \chi)$.
\item Suppose that $m=n$.
\begin{enumerate}
%
\item 
Unless there exists $e\in L$ such that $\left(e,e\right)_V=\lambda$,
we have $W_{\psi, \lambda} \equiv 0$ on $\Theta_{V, \mathbb{W}_n}(\pi, \psi, \chi)$.
%
\item Suppose that $e_\lambda \in L$ satisfies 
$\left(e_\lambda, e_\lambda\right)_V = \lambda$. 
Then for $\phi \in \mathcal{S}(V(\mA)^n)$ and $\varphi \in V_\pi$, 
we have
\begin{multline}
\label{pullback whittaker prp identity}
W_{\psi, \lambda}\left(\theta^\phi_{\psi, \chi}(\varphi)\right) =
 \int_{R_{e_\lambda}^\prime(\mA) \backslash G(\mA)}
   \omega_{\psi, \chi}(g, 1)
  \phi(e_{-1}, \dots,e_{-n+1}, e_\lambda)
  \\
  \times  B_{e_\lambda, \psi, \chi_V^{-1}}(\pi(g) \varphi) \, dg
\end{multline}
where 
\[
R_{e_\lambda}^\prime = \{ g \in G \colon g e_{-1} = e_{-1}, \,\dots,\, g e_{-n+1} = e_{-n+1}, \,g e_\lambda = e_\lambda \}.
\]
\end{enumerate}

In particular, $\Theta_{V, \mathbb{W}_n}(\pi, \psi, \chi)$ is $\psi_{N_n, \lambda}$-generic
if and only if 
$\pi$ has the  $(e_\lambda, \psi, \chi_V^{-1})$-Bessel period.
\end{enumerate}
\end{proposition}
\begin{proof}
Our claim is proved by applying the argument
in \cite[p.94--98]{Fu}, mutatis mutandis, to our setting.

From the definition of the Whittaker period, we may write 
\begin{multline*}
W_{\psi, \lambda}\left(\theta^\phi_{\psi, \chi}(\varphi)\right)
= \int_{N_M(F) \backslash N_M(\mA)} \int_{U_{\mathbb{G}_m^-}(F) \backslash U_{\mathbb{G}_m^-}(\mA)}
\\
\times
 \theta^\phi_{\psi, \chi}(\varphi)(vu) \psi_{N_m, \lambda}(vu)^{-1} \, du \, dv
\end{multline*}
where $N_M = N_m \cap M_m$.
Moreover, by the definition of the theta lift, this is equal to
\begin{multline*}
 \int_{N_M(F) \backslash N_M(\mA)} \int_{U_{\mathbb{G}_m^-}(F) \backslash U_{\mathbb{G}_m^-}(\mA)} \int_{G(F) \backslash G(\mA)} 
 \\
 \times
 \sum_{(a_i) \in V(F)^m} 
 \omega_{\psi, \chi}(g, vu) \phi(a_1, \dots, a_m)
\,
\varphi(g)\, \psi_{N_m, \lambda}(vu)^{-1}\, dg \, du \, dv.
\end{multline*}
Further,
this integral is equal to
\begin{multline}\label{lemma1 in F}
 \int_{N_M(F) \backslash N_M(\mA)}  \int_{G(F) \backslash G(\mA)} 
 \\
 \times
 \sum_{(a_i) \in \mathcal{X}^\lambda} \omega_{\psi, \chi}(g, v) \phi(a_1, \dots, a_m)
\varphi(g) 
\,\psi_{N_m, \lambda}(v)^{-1}\, dg  \, dv
\end{multline}
where 
\[
\mathcal{X}^\lambda = \left\{(a_1, \dots, a_m) \in V(F)^m \colon \mathrm{Gr}(a_1, \dots, a_m) = \begin{pmatrix}0&\dots&\dots&0\\ \vdots&&&\vdots\\ 0&\dots&\dots&0 \\ 0&\dots&0&\lambda \end{pmatrix}\right\}.
\]
Then as \cite[Lemma~1]{Fu}, in \eqref{lemma1 in F},
only the terms such that $a_1,\dots , a_m$ are linearly 
independent contribute.

Suppose that $m>n$ and that 
there exists $\left(a_1,\dots , a_m\right)\in \mathcal X^\lambda$
such that $a_1,\dots , a_m$ are linearly independent.
We denote the subspace of $V$ generated by 
$a_1,\dots , a_{m-1}$ by $V^+$.
Then $V^+$ is  an 
$\left(m-1\right)$-dimensional totally isotropic subspace
of $V$.
 Since the Witt index of $V$ is at most $n$,
we have $m=n+1$ and the Witt index of $V$ is $n$.
Then
 $\left(a_i,a_{n+1}\right)_V=0\,\,\left(1\le i\le n\right)$
implies that $a_{n+1}\in V^+$.
This is a contradiction since $a_1,\dots , a_n, a_{n+1}$
are linearly independent.
Hence the integral \eqref{lemma1 in F} must vanish
when $m>n$.

Suppose that  $m=n$.
When  $\left(a_1,\dots, a_n\right)\in \mathcal X^\lambda$ and 
$a_1,\dots, a_n$ are linearly independent,
there exists an element $h\in G\left(F\right)$
such that $ha_i=e_{-i}$ for $1\le i\le n-1$ and $ha_n\in L$
by Witt's theorem.
Hence the integral \eqref{lemma1 in F} vanishes when there does not exist
$e\in L$ such that $\left(e,e\right)_V=\lambda$.

Suppose $m=n$ and $e_\lambda\in L$ satisfies $\left(e_\lambda,e_\lambda\right)_V=\lambda$.
Then by Witt's theorem, the integral \eqref{lemma1 in F} becomes 
\begin{align*}
 &\int_{N_M(F) \backslash N_M(\mA)}  \int_{G(F) \backslash G(\mA)} 
 \\
&\times
\sum_{\gamma \in R_{e_\lambda}^\prime(F) \backslash G(F)} \omega_{\psi, \chi}(g, v)
\phi(\gamma^{-1} e_{-1}, \dots, \gamma^{-1} e_{-n+1}, \gamma^{-1}e_\lambda)
\\
&
\qquad\qquad\qquad\qquad\qquad\qquad\qquad\qquad
 \times 
\varphi(g)\, \psi_{N_n, \lambda}(v)^{-1}\, dg  \, dv
\\
=
 &\int_{N_M(F) \backslash N_M(\mA)}  \int_{R_{e_\lambda}^\prime(F) \backslash G(\mA)} \omega_{\psi, \chi}(g, v)
  \phi(e_{-1}, \dots,e_{-n+1}, e_\lambda)
\\
&
\qquad\qquad\qquad\qquad\qquad\qquad\qquad\qquad
\times
\varphi(g)\, \psi_{N_n, \lambda}(v)^{-1}\, dg  \, dv.
\end{align*}
Here $R_{e_\lambda}^\prime = D_{e_\lambda} S_{e_\lambda}^{\prime}$
with
$D_{e_\lambda}$  defined by \eqref{bessel reductive part}
for $e=e_\lambda$ and
\[
S_{e_\lambda}^{\prime} 
\coloneqq \left\{ \begin{pmatrix}1_{n-1}&A&B\\ &1_2&A^\prime\\ &&1_{n-1} \end{pmatrix} \in S^\prime \colon Ae_\lambda = 0 \right\}.
\]
We note that $R_{e_\lambda}^\prime$ is unimodular.
From the formulas~\eqref{weil rep}, the integral above is written as
\begin{align*}
 &\int_{R_{e_\lambda}^\prime(\mA) \backslash G(\mA)} \int_{D_{e_\lambda}(F) \backslash D_{e_\lambda}(\mA)}
  \int_{S_{e_\lambda}^{\prime}(F) \backslash S_{e_\lambda}^{\prime}(\mA)} \int_{N_M(F) \backslash N_M(\mA)} 
  \\
   &\quad\quad\times
   \omega_{\psi, \chi}(h s^{\prime} g, v) \phi(e_{-1}, \dots,e_{-n+1}, e_\lambda)
\varphi(h s^{\prime} g) \psi_{N_n, \lambda}(v)^{-1} \, dv \, ds^{\prime} \, dh \, dg
\\
=
 &\int_{R_{e_\lambda}^\prime(\mA) \backslash G(\mA)} \int_{D_{e_\lambda}(F) \backslash D_{e_\lambda}(\mA)}
  \int_{S_{e_\lambda}^{\prime}(F) \backslash S_{e_\lambda}^{\prime}(\mA)} \int_{N_M(F) \backslash N_M(\mA)}
  \\
 &\,\,\times   \omega_{\psi, \chi}(g, v)
  \phi(e_{-1}, \dots,e_{-n+1}, e_\lambda) \chi_V(\det h)
\varphi(h s^{\prime} g) \psi_{N_n, \lambda}(v)^{-1}  \, dv \, ds^{\prime} \, dh \, dg.
\end{align*}
When we regard $U_{n-1}$ as a subgroup of $U_n$ by 
$u \mapsto \begin{pmatrix} u&\\ &1\end{pmatrix}$, we may write 
\[
U_n = U_{n-1} U_0^\prime
\]
where 
\begin{equation}
\label{def u_0^prime}
U_0^\prime = \left\{ u_0^\prime=\begin{pmatrix} 1&0&\cdots&0&a_1\\ &1&\ddots&\vdots&\vdots\\ 
&&\ddots&0&a_{n-2}\\ &&&1&a_{n-1}\\ &&&&1\end{pmatrix} \in U_n \right\}.
\end{equation}
%
For $j (1 \leq j \leq n-1)$, let $L_j$ denote
 the subspace of $V$ spanned by $e_{-j}$, $e_\lambda$ and $e_j$.
Let $r_j(a_j)$ be the elements in $\mathrm{U}(L_j, \mA)$ whose matrix representation
with respect to the basis 
$\{ e_{-j}, e_\lambda, e_j \}$ is given by
\[
r_j(a_j) = \begin{pmatrix}1&a_j&-\frac{1}{2}\lambda^{-1} N_{E \slash F}(a_j)\\ &1&-\lambda^{-1}\overline{a_j}\\ &&1 \end{pmatrix}.
\]
We extend $r_j(a_j)$ to an element of $G(\mA)$ by letting it act trivially on $L_j^\perp$.
We note that $r_j(a_j)$ is an adaptation
 of $s_j(a_j)$ in \cite[p.97]{Fu} to our setting.
Then we have 
\begin{align}
\label{act r_i}
 \notag &\omega_{\psi, \chi}(g, \widehat{u_0^\prime u} h) \varphi(e_{-1}, \dots, e_{-n+1}, e_\lambda)
\\
=
 &\omega_{\psi, \chi}(g, \hat{u} h) \varphi(e_{-1}, \dots, e_{-n+1}, e_\lambda+\sum_{j=1}^{n-1}a_j e_{-j})
\\
=
\notag &\omega_{\psi, \chi}(r_1(a_1)^{-1} \cdots r_{n-1}(a_{n-1})^{-1} g, \hat{u} h)
\varphi(e_{-1}, \dots, e_{-n+1}, e_\lambda).
\end{align}
Hence our integral becomes
\begin{align*}
 &\int_{R_{e_\lambda}^\prime(\mA) \backslash G(\mA)} \int_{D_{e_\lambda}(F) \backslash D_{e_\lambda}(\mA)}
  \int_{E^{n-1} \backslash \mA_E^{n-1}}\int_{U_{n-1}(F) \backslash U_{n-1}(\mA)}
    \int_{S_{e_\lambda}^{\prime}(F) \backslash S_{e_\lambda}^{\prime}(\mA)} 
  \\
  &\qquad\times
\omega_{\psi, \chi}(r_1(a_1)^{-1} \cdots r_{n-1}(a_{n-1})^{-1} g, \hat{u})
\phi(e_{-1}, \dots, e_{-n+1}, e_\lambda)
\\
 &\qquad\qquad\times \chi_V(\det h)
\varphi(h s^{\prime} g) \psi_{N_n, \lambda}(\widehat{u_0} \widehat{u})^{-1}  \, ds^{\prime} \, du \, du_0  \, dh \, dg
\\
=
 &\int_{R_{e_\lambda}^\prime(\mA) \backslash G(\mA)} \int_{D_{e_\lambda}(F) \backslash D_{e_\lambda}(\mA)}
\int_{U_{n-1}(F) \backslash U_{n-1}(\mA)}
    \int_{S^{\prime}(F) \backslash S^{\prime}(\mA)} 
  \\
&\qquad\times\omega_{\psi, \chi}(g, \hat{u})
\phi(e_{-1}, \dots, e_{-n+1}, e_\lambda) 
\\
&\qquad\qquad\times
\chi_V(\det h)
\varphi(h s^{\prime} g) \chi_{e_\lambda}(s^\prime)^{-1} 
 \psi_{N_n, \lambda}(\widehat{u})^{-1} \, ds^{\prime} \, du \, dh \, dg
\end{align*}
since we may take $\left\{ r_{n-1}(a_{n-1}) \cdots r_1(a_1)\colon
a_i\in \mathbb A_E \right\}$ as a set of representatives for
$S_{e_\lambda}^{\prime}\left(\mA\right) \backslash S^{\prime}\left(\mA\right)$.
Then by a computation similar to that
 in \cite[p.96--98]{Fu}, we finally have
\begin{align*}
&W_{\psi, \lambda}\left(\theta^\phi_{\psi, \chi}(\varphi)\right)
=
 \int_{R_{e_\lambda}^\prime(\mA) \backslash G(\mA)}
   \omega_{\psi, \chi}(g, 1)
  \phi(e_{-1}, \dots,e_{-n+1}, e_\lambda) 
  \\
 & \qquad\qquad\qquad\qquad\times
  \int_{R_{e_\lambda}(F) \backslash R_{e_\lambda}(\mA)}
  \chi_{e_\lambda, \chi_V^{-1}}(r)^{-1} \varphi(rg)\, dr \, dg
  \\
  =
 &\int_{R_{e_\lambda}^\prime(\mA) \backslash G(\mA)}
   \omega_{\psi, \chi}(g, 1)
  \phi(e_{-1}, \dots,e_{-n+1}, e_\lambda)  B_{e_\lambda, \psi, \chi_V^{-1}}(\pi(g) \varphi) \, dg.
 \end{align*}

%
The last assertion concerning the non-vanishing follows from this formula by
an argument similar to that for the proof of \cite[Proposition~2]{FM1}.
Let us briefly recall the argument. One direction is clear. Let us suppose that 
there exists $\varphi \in V_\pi$ such that $B_{e_\lambda, \psi, \chi_V^{-1}}(\varphi) \ne 0$.
Since $R_{e_\lambda}^\prime(F_v) \backslash G(F_v) \simeq  G(F_v) \cdot (e_{-1}, \dots,e_{-n+1}, e_\lambda)$ 
is locally closed in $V(F_v)^n$, as in the proof of \cite[Proposition~2]{FM1}, 
we can reduce our assertion to the non-vanishing of 
\[
I_{\Sigma} =  \int_{R_{e_\lambda}^\prime(\mA_\Sigma) \backslash G(\mA_\Sigma)}
   \omega_{\psi, \chi}(g, 1)
  \phi(e_{-1}, \dots,e_{-n+1}, e_\lambda)  B_{e_\lambda, \psi, \chi_V^{-1}}(\pi(g) \varphi) \, dg
\]
for $\mA_{\Sigma} = \prod_{v \not \in \Sigma} F_v$
where $\Sigma$ is a sufficiently large finite set of places of $F$.
Indeed, let us take $\Sigma$ so that for $v \not \in \Sigma$, 
$\psi_v, \chi_{V, v}, \chi_{W, v}$ are unramified, $E_v \slash F_v, $ is unramified or split, 
$\varphi$ is $G(\mathcal{O}_v)$-invariant, $G(F_v) \simeq \mathbb{G}(F_v)$, 
$e_\lambda \in L(\mathcal{O}_v)$ and $\phi_v$ is the characteristic function of $V(\mathcal{O}_v)^n$.
As in \cite{FM1}, the decomposition in \cite[Lemma~2.4]{Su} is applicable to our case since 
\[
\mathrm{U}(L)(F_v) \simeq \mathbb{G}_1(F_v) \simeq \{ (g, a) \in \mathrm{GL}_2(F_v) \times E_v \colon \det g \cdot N_{E_v \slash F_v}(a) =1\}
\]
when $v \not \in \Sigma$.
Hence, we obtain
\begin{multline*}
I_{\Sigma} = 
\int_{S^{\prime \prime}(\mA_\Sigma)} \int_{T_n(\mA_\Sigma)} \int_{(\mA_\Sigma \otimes E)^{n-1}}
\\
   \omega_{\psi, \chi}(r_1(a_1)^{-1} \cdots r_{n-1}(a_{n-1})^{-1} ut , 1)
  \phi(e_{-1}, \dots,e_{-n+1}, e_\lambda)  
  \\
  \times B_{e_\lambda, \psi, \chi_V^{-1}}(\pi(r_1(a_1)^{-1} \cdots r_{n-1}(a_{n-1})^{-1} ut) \varphi) \,da_1 \dots da_{n-1}\, dt \, du
\end{multline*}
where $T_n$ is the subgroup of $\mathbb{G}$ consisting of diagonal matrices.
By an argument similar to that in  the proof of \cite[Proposition~2]{FM1} with the action \eqref{act r_i} of $r_i(a_i)$, 
the  integrand of this integral is supported on $(t, u, (a_1, \dots, a_{n-1})) \in T_n(\mathcal{O}_\Sigma) \times S^{\prime \prime}(\mathcal{O}_\Sigma) \times 
(\mathcal{O}_{E, \Sigma})^{n-1}$
with $\mathcal{O}_\Sigma = \prod_{v \not \in \Sigma}\, \mathcal{O}_v$ and $\mathcal{O}_{E, \Sigma}= \prod_{v \not \in \Sigma}\, \mathcal{O}_{E, v}$.
Thus
\begin{align}\label{I_Sigma =B}
I_{\Sigma}&=
\int_{S^{\prime \prime}(\mathcal{O}_\Sigma)} \int_{T_n(\mathcal{O}_\Sigma)} \int_{(\mathcal{O}_{E, \Sigma})^{n-1}}
\\
  & \qquad\omega_{\psi, \chi}(r_1(a_1)^{-1} \cdots r_{n-1}(a_{n-1})^{-1}t u , 1)
  \phi(e_{-1}, \dots,e_{-n+1}, e_\lambda)  \notag
  \\
&\quad\times B_{e_\lambda, \psi, \chi_V^{-1}}(\pi(r_1(a_1)^{-1} \cdots r_{n-1}(a_{n-1})^{-1}tu) \varphi) \,da_1 \dots da_{n-1} \, dt \, du
\notag
  \\
&=
B_{e_\lambda, \psi, \chi_V^{-1}}(\varphi) \ne 0.
\notag
\end{align}
\end{proof}
%
%
%
%
%
%
%
%
%
%
%
%
%
%
%
%
%
%
%
%
\section{Proof of Theorem~\ref{period to L-value thm} and Theorem~\ref{opp GGP}}
\label{s:proof thm}
%
\subsection{Proof of Theorem~\ref{period to L-value thm}}
First we note that our argument is similar to the proof of \cite[Theorem~1]{FM1}.

The following proposition which is a unitary group case analogue of \cite[Proposition~2.3]{MR} (cf.  (\cite[Proposition~9.4]{GS}) holds,
by a  computation similar to that in the proof of Theorem~\ref{pullback whittaker prp} (1).
Indeed, the computation there
remains valid for the theta lift in the opposite direction.
%
\begin{proposition}
Let $k$ be a non-archimedean local field of characteristic zero
and $l$ be a quadratic extension of $k$.
Let $\tau$ be a non-trivial additive character of $k$, and $\nu_1, \nu_2$ be characters of $l^\times$
such that $\nu_i|_{k^\times} = 1$. Let $\omega_{\tau, (\nu_1, \nu_2)}^{(n,m)}$ be the Weil representation
of $\mathbb{G}_n^+(k) \times \mathbb{G}_m^-(k)$ corresponding to $\tau$ and $(\nu_1, \nu_2)$.
Let $\mathcal{U}_n$ be the group of
upper triangular unipotent matrices in $\mathbb{G}_n^+(k)$,
 which is the unipotent radical of 
a Borel subgroup of $\mathbb{G}_n^+(k)$,
and $\mu$ be a non-degenerate character of $\mathcal{U}_n$.

Suppose that $m<n$. Then as a representation of $\mathbb{G}_m^-(k)$, we have
\[
\left( \omega_{\tau, (\nu_1, \nu_2)}^{(n,m)} \right)_{\mathcal{U}_n, \mu} = 0
\]
where the left-hand side denotes the twisted Jacquet module for $(\mathcal{U}_n, \mu)$.

\end{proposition}
From this proposition, the following vanishing result on the
local theta correspondence readily follows. 
Note that it also should follow from Baki\'{c}
and Hanzer~\cite{BH}.
%
\begin{corollary}
\label{vanishing cor}
If $\pi$ is a $\mu$-generic irreducible representation of $\mathbb{G}_n^+(k)$,
then $\pi$ does not participate in the theta correspondence with $\mathbb{G}_m^-(k)$ for $m <n$.
\end{corollary}
A similar proposition holds in the split case 
thanks to
the analogous
 computations by Watanabe~\cite[Proposition~1]{TW}.
\begin{proposition}
Let $\omega_{\tau}^{(n,m)}$ be the Weil representation
of $\mathrm{GL}_n(k) \times \mathrm{GL}_m(k)$.
Suppose that $m<n$. Then as a representation of $\mathrm{GL}_m(k)$, we have
\[
\left( \omega_{\tau}^{(n,m)} \right)_{N_{n, s}, \tau_{N_{n, s}}} = 0
\]
where $N_{n, s}$ denotes the group of upper triangular unipotent matrices in $\mathrm{GL}_n(k)$,
$\tau_{N_{n,s}}$ is a non-degenerate character of $N_{n, s}$, and 
the left-hand side denotes the twisted Jacquet module for $(N_{n, s}, \tau_{N_{n, s}})$.
\end{proposition}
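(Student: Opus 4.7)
The plan is to follow Watanabe's computation in \cite[Proposition~1]{TW}, which establishes the same style of vanishing for a Weil representation in a split dual-pair setting.

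First, I would give an explicit realization of $\omega_\tau^{(m,n)}$ as a Schr\"odinger model on a Schwartz space $\mathcal{S}(\mathcal{X})$, where the action of $\mathrm{GL}_n(k)$ is constructed via the splitting $E=k\oplus k$. Crucially, the action of the Borel unipotent $N_{n,s}$ on this model combines both (i) an affine change-of-variables in the argument and (ii) multiplication by a character depending bilinearly/quadratically on the argument, mirroring the formulas \eqref{weil rep} in the unitary setting.

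Next, I would analyze $(N_{n,s},\tau_{N_{n,s}})$-equivariant functionals $T$ on this model. The multiplicative-phase action (ii) localizes the support of $T$ to a subvariety $\mathcal{X}_{\text{iso}}\subset\mathcal{X}$ whose points correspond to configurations of $n$ vectors in an $m$-dimensional ambient space satisfying orthogonality relations analogous to those encountered in the proof of Proposition~\ref{pullback whittaker prp}. The equivariance under (i) then forces a linear-independence (non-degeneracy) condition on these vectors. When $m<n$, no $n$ such vectors exist in an $m$-dimensional space, so $\mathcal{X}_{\text{iso}}=\emptyset$ and $T\equiv 0$. Passing to coinvariants then gives the claimed vanishing.

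The main technical obstacle is to carry out Watanabe's construction in our precise setup, which requires transposing the mixed-model formulas from the quadratic-extension setting of \S\ref{ss:weil rep} to the split case $E=k\oplus k$, while keeping careful track of the character twists $\chi_V$ and $\chi_{\mathbb{W}}$. Once the explicit action of $N_{n,s}$ is written down, the vanishing becomes formally identical to the pigeonhole/dimension argument underpinning the unitary analogue stated immediately before, so no new conceptual input is needed beyond a clean bookkeeping of the split-case conventions.
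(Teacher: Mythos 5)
The paper itself gives no proof of this proposition: it is attributed to "the similar computations by Watanabe~\cite[Proposition~1]{TW}" and left at that, so there is no in-paper argument to compare yours against, and your plan of extracting the argument from Watanabe's type-II computation is the appropriate move. That said, the mechanism you describe is closer to the unitary (type-I) case than to the type-II $\mathrm{GL}\times\mathrm{GL}$ one. In the natural Schr\"odinger model $\mathcal{S}(\mathrm{Mat}_{m\times n}(k))$, the unipotent $N_{n,s}$ acts purely by right translation --- there is no quadratic or bilinear phase, hence no Gram matrix and no notion of isotropy. The phase $\psi(\mathrm{tr}({}^{t}xSy))$ of \eqref{weil rep GL} only appears after a partial Fourier transform to the mixed model, and the localization it forces is a rank constraint on the product $y\,{}^{t}x$ together with a maximal-rank condition (compare the set $\mathcal{X}^\lambda_s$ in Section~\ref{s:RGGP GL}), not an "orthogonality relation." Your sketch should state explicitly that it is working in this mixed model and should not let the unitary vocabulary (Gram matrix, isotropy, $\mathcal{X}_{\mathrm{iso}}$) stand in for the actual type-II constraint.

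More substantively, the dimension count "no $n$ such vectors exist in an $m$-dimensional space" gives the wrong threshold. A Bernstein--Zelevinsky stratification of $\mathcal{S}(\mathrm{Mat}_{m\times n}(k))$ by rank shows that the open (rank-$m$) stratum contributes a $\mathrm{GL}_n(k)$-subrepresentation of the form $\mathrm{ind}_{P}^{\mathrm{GL}_n}\bigl(C_c^\infty(\mathrm{GL}_m)\boxtimes\mathbf{1}_{\mathrm{GL}_{n-m}}\bigr)$, whose $(N_{n,s},\tau_{N_{n,s}})$-twisted Jacquet module is nonzero precisely when $n-m\le 1$. So the vanishing holds for $m<n-1$, not for all $m<n$; already for $(m,n)=(1,2)$ the functional $\phi\mapsto\int_k\phi(1,x)\,\tau(-x)\,dx$ is a nonzero $(N_{2,s},\tau)$-equivariant functional on $\mathcal{S}(k^2)$. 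The boundary case $m=n-1$ never arises in the paper's application --- in the split analogue of the preceding unitary proposition both GL sizes are even, so the gap between them is automatically at least two --- but a complete proof must land on the sharper bound $m\le n-2$, and your argument as sketched would not catch this. You should either establish the correct threshold or make explicit the parity hypothesis that is implicitly in force.
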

\begin{corollary}
\label{vanishing cor split}
If $\pi$ is a generic irreducible representation of $\mathrm{GL}_n(k)$,
then $\pi$ does not participate in the theta correspondence with $\mathrm{GL}_m(k)$ for $m <n$.
\end{corollary}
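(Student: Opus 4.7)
The plan is to deduce the corollary from the preceding Proposition by a standard Frobenius reciprocity argument for twisted Jacquet modules. The underlying heuristic is that a generic representation of $\mathrm{GL}_n(k)$ can only occur as a quotient of a representation whose twisted Jacquet module along $(N_{n,s}, \tau_{N_{n,s}})$ is non-zero; since the Proposition forces that Jacquet module of the Weil representation to vanish, no such $\pi$ can appear in the theta correspondence.

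First I would suppose, for the sake of contradiction, that a $\tau_{N_{n,s}}$-generic irreducible $\pi$ of $\mathrm{GL}_n(k)$ participates in the theta correspondence with some irreducible $\sigma$ of $\mathrm{GL}_m(k)$ with $m<n$. By definition this yields a non-zero $\mathrm{GL}_m(k)\times \mathrm{GL}_n(k)$-equivariant quotient map
\[
T : \omega_\tau^{(m,n)} \twoheadrightarrow \sigma \boxtimes \pi.
\]
Next, I would use the genericity of $\pi$: there exists a non-zero Whittaker functional $\ell_\pi : \pi \to \mathbb{C}_{\tau_{N_{n,s}}}$. Composing $T$ with $\mathrm{id}_\sigma \otimes \ell_\pi$ produces a non-zero $\mathrm{GL}_m(k)\times N_{n,s}$-equivariant map
\[
\omega_\tau^{(m,n)} \longrightarrow \sigma \boxtimes \mathbb{C}_{\tau_{N_{n,s}}}.
\]

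Then I would invoke the universal property of the twisted Jacquet functor: such a map factors through a non-zero $\mathrm{GL}_m(k)$-equivariant map
\[
\bigl(\omega_\tau^{(m,n)}\bigr)_{N_{n,s},\,\tau_{N_{n,s}}} \longrightarrow \sigma.
\]
By the Proposition this Jacquet module vanishes, so the composite map must be zero, contradicting the non-vanishing of $T$ and the non-vanishing of $\ell_\pi$ on the irreducible $\pi$.

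There is no real obstacle here: once the twisted Jacquet module vanishing in the preceding Proposition is in hand, the corollary is a purely formal consequence. The only points requiring minor care are the equivariance check for the composed map and the appeal to the universal property (equivalently, Frobenius reciprocity) identifying $\mathrm{Hom}_{N_{n,s}}(V, \mathbb{C}_{\tau_{N_{n,s}}})$ with $\mathrm{Hom}(V_{N_{n,s},\tau_{N_{n,s}}}, \mathbb{C})$ for any smooth representation $V$, both of which are standard.
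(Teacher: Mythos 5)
Your argument is correct and is exactly the standard Frobenius-reciprocity deduction the paper has in mind when it states that the corollary ``readily follows'' from the vanishing of the twisted Jacquet module (the paper gives no further details for either Corollary~4.2 or 4.4). You have simply written out the formal step the authors left implicit.
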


Before proceeding to a proof of Theorem~\ref{period to L-value thm},
we recall the following local-global criterion concerning
the  non-vanishing of theta correspondences.
\begin{theorem}[Theorem~10.1 in \cite{Yam}]
\label{yam thm}
Let $\pi$ be an irreducible cuspidal automorphic representation of $G(\mA)$ with $G \in \mathcal{G}_n$.
Suppose that the theta lift $\Theta_{V, \mathbb{W}_m}(\pi, \psi, \chi)$ of $\pi$ to $\mathbb{G}_{m}^-(\mA)$ is zero for any $m <n$.

Then  $\Theta_{V, \mathbb{W}_n}(\pi, \psi, \chi)$ is non-zero if and only if the following two conditions hold:
\begin{enumerate}
\item $L\left( \frac{1}{2}, \pi \times \chi_V^{-1}\right) \ne 0$.
\item  For any place $v$ of $F$, the local theta lift $\theta_{V, \mathbb{W}_n}(\pi_v, \psi_v, \chi_v)$ is non-zero.
\end{enumerate}
\end{theorem}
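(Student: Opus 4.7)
The plan is to prove the claimed equivalence by evaluating the Petersson inner product of the global theta lift via the Rallis inner product formula, which in this equal-rank setting expresses $\langle \theta^\phi(\varphi), \theta^{\phi}(\varphi)\rangle$ on $\mathbb{G}_n^-(\mA)$ as a product of the central $L$-value $L(\tfrac12, \pi \times \chi_V^{-1})$ and local doubling zeta integrals whose non-vanishing is controlled by the local theta correspondence. Non-vanishing of the global theta lift is then equivalent to non-vanishing of this inner product for some choice of data $(\varphi,\phi)$, which splits naturally into the two conditions in the statement.

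Concretely, I would first form the doubled theta kernel on $G(\mA) \times G(\mA)$ attached to the doubled skew-Hermitian space $\mathbb{W}_n \oplus \mathbb{W}_n^-$ and pair it against $\varphi \otimes \overline{\varphi}$. The hypothesis $\Theta_{V, \mathbb{W}_m}(\pi, \psi, \chi) = 0$ for $m < n$ places us in the first-occurrence range of the Kudla--Rallis tower, so the regularized Siegel--Weil formula for unitary groups (due to Ichino and refined by Yamana) applies and identifies this integrated kernel with a value/residue of a Siegel Eisenstein series $E(g_1, g_2, s, \Phi)$ on the doubled group $\mathbb{G}_{2n}^-$ induced from the Siegel parabolic, evaluated at the equal-rank point $s = 1/2$ in the normalization of the doubling method.

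Substituting and unfolding by the Piatetski-Shapiro--Rallis doubling method converts $\langle \theta^\phi(\varphi), \theta^\phi(\varphi)\rangle$ into the global doubling zeta integral for $\pi \otimes \pi^\vee$, which factors into an Euler product of local doubling zeta integrals $Z_v(s, \varphi_v, \Phi_v)$. By the Lapid--Rallis--Yamana theory of local $\gamma$-factors and local $L$-factors defined via the doubling method, each local factor equals $L(s, \pi_v \times \chi_{V,v}^{-1})$ times a normalized zeta integral $Z_v^\natural$ that is holomorphic at $s = 1/2$, yielding
\[
\langle \theta^\phi(\varphi), \theta^\phi(\varphi)\rangle = C \cdot L\!\left(\tfrac12, \pi \times \chi_V^{-1}\right) \cdot \prod_v Z_v^\natural\!\left(\tfrac12, \varphi_v, \phi_v\right),
\]
with $C$ a nonzero constant. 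The final local step is to identify non-vanishing of $Z_v^\natural(1/2)$ with non-vanishing of the local theta lift $\theta_{V, \mathbb{W}_n}(\pi_v, \psi_v, \chi_v)$, using the local Rallis inner product formula together with the uniqueness of the $\mathrm{U}(V)(F_v)$-invariant bilinear form on $\pi_v \otimes \omega_{\psi_v,\chi_v}$; condition $(2)$ of the statement matches precisely the local pieces contributing to the global inner product, while condition $(1)$ appears as the $L$-factor.

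The main obstacle is the pole and regularization analysis at the equal-rank point $s = 1/2$ on $\mathbb{G}_{2n}^-$: this value lies on the boundary of the convergence range of the Siegel Eisenstein series, so extracting a clean non-zero contribution from the doubled theta integral requires the full regularized Siegel--Weil machinery, with the lower-level vanishing hypothesis playing an essential role by suppressing the ``second-term'' tower contributions that would otherwise obstruct the identity. A secondary but non-trivial technical point is the archimedean and ramified match between the local doubling zeta integral and the local $L$-factor, where one must prove holomorphy and appropriate non-vanishing of $Z_v^\natural$ at $s = 1/2$ -- this is exactly where Yamana's $\gamma$-factor characterization of local $L$-factors via the doubling method is essential.
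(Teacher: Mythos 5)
First, a point of reference: the paper does not prove this statement at all. It is imported verbatim as Theorem~10.1 of Yamana \cite{Yam}, so the only meaningful comparison is with Yamana's argument, and your sketch does follow that route in outline (Rallis inner product formula, regularized Siegel--Weil, doubling Euler product, reduction to a local non-vanishing statement). The same circle of ideas is what the paper itself recalls in Section~5 when it quotes the Rallis inner product formula (Theorem~\ref{Rallis inner} $=$ Lemma~10.1 of \cite{Yam}).

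However, your doubling configuration is set up on the wrong side, and this is a genuine error rather than a notational slip. To compute $\langle\theta^\phi_{\psi,\chi}(\varphi),\theta^\phi_{\psi,\chi}(\varphi)\rangle$ for the lift from $G=\mathrm{U}(V)$ to $\mathbb{G}_n^-=\mathrm{U}(\mathbb{W}_n)$, one integrates the product of two theta kernels over $[\mathbb{G}_n^-]$; that product is a theta kernel for the pair $\left(\mathrm{U}(V^\Box),\mathrm{U}(\mathbb{W}_n)\right)$ with $V^\Box=V\oplus(-V)$ the doubled \emph{Hermitian} space, and the regularized Siegel--Weil formula converts the $\mathbb{G}_n^-(\mA)$-integral into a Siegel Eisenstein series on $\mathrm{U}(V^\Box)(\mA)$ restricted to $\iota(G\times G)$, which is then paired against $\varphi\otimes\overline{\varphi}$ by the doubling method. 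You instead double the skew-Hermitian space $\mathbb{W}_n\oplus\mathbb{W}_n^-$ and place the Eisenstein series on $\mathbb{G}_{2n}^-$ while still evaluating it at $(g_1,g_2)\in G\times G$: as written this is not a function of the variables you pair against, and the see-saw you describe is the one that computes inner products of lifts in the \emph{opposite} direction, from $\mathbb{G}_n^-$ to $G$. The correct configuration (the space $V^\Box$, the degenerate principal series $I_v(s)$ on $\mathrm{U}(V^\Box,F_v)$, evaluation at the center) is exactly the one recorded in Section~5 of the paper. Two further points are used but not supplied: the hypothesis $\Theta_{V,\mathbb{W}_m}(\pi,\psi,\chi)=0$ for $m<n$ is needed first of all, via the tower property, to guarantee that $\Theta_{V,\mathbb{W}_n}(\pi,\psi,\chi)$ is cuspidal so that the Petersson norm you propose to compute is even defined (not only to tame the second-term range of the Siegel--Weil formula); and the local equivalence ``the normalized doubling zeta integral is non-vanishing at the central point for some data $\Longleftrightarrow$ $\theta_{V,\mathbb{W}_n}(\pi_v,\psi_v,\chi_v)\neq 0$'' is a substantial local theorem in \cite{Yam}, proved through the local see-saw and a detailed analysis of the doubled degenerate principal series, not a formal consequence of uniqueness of invariant pairings as your last paragraph suggests.
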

%
%
%
%
Now we are ready to prove Theorem~\ref{period to L-value thm}.
\begin{proof}[Proof of Theorem~\ref{period to L-value thm}]
As a pair of characters to define the splitting, we may take 
\[
\chi_\Lambda^\Box \coloneqq (\Lambda^{-1}, \Lambda^{-1}).
\]
Since $\pi_w$ is generic at some finite place $w$ of $F$, local theta lift  $\theta_{V, \mathbb{W}_m}(\pi_w, \psi_w, \chi_{\Lambda, w}^\Box)$ is zero for $m <n$
by Corollary~\ref{vanishing cor} and Corollary~\ref{vanishing cor split}. Hence, the global theta lift $\Theta_{V, \mathbb{W}_m}(\pi, \psi, \chi_{\Lambda}^\Box)$  
is also zero for $m<n$.
On the other hand, since $\pi$ has the
$(e, \psi, \Lambda)$-Bessel period, 
$\Theta_{V, \mathbb{W}_n}(\pi, \psi, \chi_{\Lambda}^\Box)$ has the $\psi_{N_n, \lambda}$-Whittaker period
by Proposition~\ref{pullback whittaker prp}
where $\lambda = (e, e)_V$.
Hence in particular, 
$\Theta_{V, \mathbb{W}_n}(\pi, \psi, \chi_{\Lambda}^\Box) \ne 0$.
Therefore, by Theorem~\ref{yam thm}, we obtain $L\left( \frac{1}{2}, \pi \times \Lambda \right) \ne 0$ .
\end{proof}
\subsection{Proof of Theorem~\ref{opp GGP}}
\label{proof of theorem ggp}
In this section, we shall prove Theorem~\ref{opp GGP}.
First we recall here the claim of  the condition (2)
in Theorem~\ref{opp GGP}.
%
\begin{Claim}
\label{opp GGP lem}
There exists  a pair $\left(G^\prime, \pi^\prime\right)$ where
$G^\prime \in \mathcal{G}_n$ and $\pi^\prime$ 
an irreducible 
cuspidal tempered automorphic representation 
of $G^\prime(\mA)$
such that $\pi^\prime$ is nearly equivalent to $\pi$
and $\pi^\prime$ admits the $(e, \psi, \Lambda)$-Bessel period.
\end{Claim}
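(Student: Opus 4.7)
The plan is to prove Claim~\ref{opp GGP lem} by reversing the route used in the proof of Theorem~\ref{period to L-value thm}: instead of starting with a Bessel period on $\pi$ and deducing $L(\tfrac{1}{2}, \pi \times \Lambda) \neq 0$ via a nonzero theta lift, one starts with the $L$-value hypothesis and manufactures, via automorphic descent, a globally generic automorphic representation $\sigma$ on $\mathbb{G}_n^-(\mA)$; its theta lift back to a suitable $G^\prime \in \mathcal{G}_n$ then produces $\pi^\prime$, and the generic Whittaker period on the lift forces the Bessel period on $\pi^\prime$ via the biconditional in Proposition~\ref{pullback whittaker prp}(2)(b).

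The first step would be to construct a nonzero globally generic cuspidal tempered automorphic representation $\sigma$ on the quasi-split group $\mathbb{G}_n^-(\mA)$ whose functorial transfer to $\mathrm{GL}_{2n}(\mA_E)$ matches, after the appropriate twisting by $\Lambda$, the base change $\Pi$ of $\pi$ provided by \cite{KMSW, Mok}. Since $\Pi$ is isobaric tempered and
\[
L(1/2, \Pi \times \Lambda) = L(1/2, \pi \times \Lambda)
\]
is nonzero by hypothesis, one obtains such a $\sigma$ by invoking the automorphic descent of Ginzburg-Rallis-Soudry type adapted to unitary groups. The $L$-value hypothesis is exactly what guarantees that the descent is cuspidal (and not residual) and gives a nonzero representation; temperedness of $\sigma$ is inherited from that of $\Pi$.

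Next I would theta-lift $\sigma$ back to $\mathcal{G}_n$. At each place $v$, local theta dichotomy for tempered generic representations selects a distinguished Hermitian space $V_v^\prime$ of dimension $2n$ such that the local theta lift of $\sigma_v$ to the corresponding unitary group is nonzero; at almost all places, $V_v^\prime \simeq V_v$. These local choices assemble to a global Hermitian space $V^\prime$ and hence to a group $G^\prime \in \mathcal{G}_n$, and the global theta lift
\[
\pi^\prime := \Theta_{\mathbb{W}_n, V^\prime}(\sigma, \psi, \chi_\Lambda^\Box)
\]
is a nonzero cuspidal tempered automorphic representation of $G^\prime(\mA)$ whose base change equals $\Pi$. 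In particular $\pi^\prime$ is nearly equivalent to $\pi$.

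Finally, to see that $\pi^\prime$ admits the $(e, \psi, \Lambda)$-Bessel period, apply Proposition~\ref{pullback whittaker prp}(2)(b) with the roles of $\pi$ and $\sigma$: the theta lift of $\pi^\prime$ back to $\mathbb{G}_n^-(\mA)$ contains $\sigma$ by Rallis-tower/Howe reciprocity in the equal-rank setting, and $\sigma$ is $\psi_{N_n, \lambda_e}$-generic by construction, where $\lambda_e = (e,e)$. Hence the theta lift of $\pi^\prime$ is generic, and the equivalence in Proposition~\ref{pullback whittaker prp} then yields the required Bessel period on $\pi^\prime$. The main obstacle in this outline is the descent step: one must appeal to a version of the Ginzburg-Rallis-Soudry descent for unitary groups which produces a cuspidal generic tempered representation on $\mathbb{G}_n^-$ with the precise $L$-parameter prescribed by (a suitable $\Lambda$-twist of) $\pi$, and verifying cuspidality and temperedness of the descent output requires a careful analysis of the Arthur/Vogan-packet structure of $\pi$ provided by \cite{KMSW, Mok} in combination with the $L$-value hypothesis.
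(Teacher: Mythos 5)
Your outline follows essentially the same route as the paper: base change $\pi \rightsquigarrow \Pi$, GRS descent to a $\psi_{N_n,\lambda_e}$-generic cuspidal $\sigma$ on $\mathbb{G}_n^-(\mA)$, epsilon-dichotomy to select a local Hermitian space $V'_v$ at each place with nonvanishing local theta lift, glue to a global $V'$ and $G'=\mathrm{U}(V')\in\mathcal{G}_n$, take $\pi'=\Theta_{\mathbb{W}_n,V'}(\sigma,\psi,\chi_\Lambda^\Box)$, and finally apply Proposition~\ref{pullback whittaker prp}~(2)(b) to transfer genericity of $\theta(\pi')$ into a Bessel period on $\pi'$. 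That is exactly what the paper does. There is, however, a genuine misstep in where the $L$-value hypothesis is used, and two steps are treated as automatic that in fact need the real work of the argument.

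You assert that $L(\tfrac12,\pi\times\Lambda)\neq0$ is ``exactly what guarantees that the descent is cuspidal (and not residual) and gives a nonzero representation.'' That is not where the hypothesis enters: the GRS descent is cuspidal, generic, and nonzero for any admissible isobaric input (this is governed by the Asai-pole structure of the base change $\Pi$, not by the central $L$-value). What the nonvanishing of $L(\tfrac12,\pi\times\Lambda)$ actually buys is that the global root number equals $+1$, which, combined with the local epsilon-dichotomy identities $\varepsilon(\tfrac12,\pi^\circ_v\times\Lambda_v,\psi_v^{-1})=\omega_{\pi^\circ_v}(-1)\varepsilon(V'_v)\varepsilon(\mathbb{W}_v)$ at each place, forces $\prod_v\varepsilon(V'_v)=1$. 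Only then does the Hasse principle let the local Hermitian spaces ``assemble'' into a global $V'$ — the step you present as immediate. Without this verification the construction of $G'$ simply does not go through. Two further steps you compress deserve scrutiny: (i) the archimedean epsilon-dichotomy for tempered representations of $\mathrm{U}(p,q)$ in the equal-rank case is not off-the-shelf and the paper supplies a proof (Proposition~\ref{dicho archi}) via Harish-Chandra parameters and Paul's theta-correspondence computations; and (ii) the assertion that the theta lift of $\pi'$ back to $\mathbb{G}_n^-(\mA)$ ``contains $\sigma$ by Rallis-tower/Howe reciprocity'' conceals the actual argument: one must first show $L(\tfrac12,\pi'\times\Lambda)=L(\tfrac12,\sigma\times\Lambda)\neq0$ via invariance of local $\gamma$-factors under theta, invoke Yamana's criterion (Theorem~\ref{yam thm}) together with nonvanishing of all local theta lifts to get global nonvanishing, show cuspidality via the tower property, and then use local-global compatibility of theta plus multiplicity one for $\mathbb{G}_n^-$ to identify the global lift with $\sigma$.
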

%
First, we show that the condition (2) implies the condition (1).
We note that $\pi$ and $\pi^\prime$ in Claim
have the same $A$-parameter which may be proved in the same manner as Atobe and Gan~\cite[Proposition~7.4]{AG2}.
Moreover, since $\pi$ is tempered, the $A$-packet containing $\pi$ coincides with the $L$-packet of the associated $L$-parameter,
and thus $\pi$ and $\pi^\prime$ have the same local $L$-parameter.
Therefore, we find that 
\[
L \left(s, \pi \times \Lambda \right) = L \left(s, \pi^\prime \times \Lambda \right).
\]
As in \cite[Remark~2]{FM1}, when an irreducible cuspidal automorphic representation is tempered,
then its local component is  generic at some finite place.
Hence, by Theorem~\ref{period to L-value thm}, we obtain the condition (1), i.e., 
\[
L \left(\frac{1}{2}, \pi \times \Lambda \right) = L \left(\frac{1}{2}, \pi^\prime \times \Lambda \right) \ne 0.
\]
By the same argument, the condition (b) implies the condition (a).

Then our remaining task is to prove the implications (1) $\Longrightarrow$ (2) 
and (a) $\Longrightarrow$ (b).

By the following proposition, it is sufficient to 
prove the implication (1) $\Longrightarrow$ (2).
%
%
%
\begin{proposition}
\label{opp GGP prp}
Suppose that the implication (1) $\Longrightarrow$ (2) holds.

Then  the implication (a) $\Longrightarrow$ (b) holds.
\end{proposition}
\begin{proof}
%
Suppose that the condition (a) holds.
Then the condition (1) holds and hence the condition (2) also holds.
As we remarked above, $\pi$ and $\pi^\prime$ have the same $L$-parameter.
When $E_v \simeq F_v \oplus F_v$, then $G(F_v)  \simeq G^\prime(F_v) \simeq \mathrm{GL}_{2n}(F_v)$ and thus $\pi_v \simeq \pi_v^\prime$
because all $L$-packets of $ \mathrm{GL}_{2n}(F_v)$ are singleton. Suppose that $E_v$ is a quadratic extension field of $F_v$.
Then by Beuzart-Plessis~\cite{BP1, BP2}, we know that in each Vogan local $L$-packet,
only one element has the given Bessel model. This shows that $G(F_v) \simeq G^\prime(F_v)$ and $\pi_v \simeq \pi_v^\prime$. 
Hence, we obtain $G=G^\prime$ and $\pi \simeq \pi^\prime$.
Finally we have $\pi = \pi^\prime$
since the multiplicities of $\pi, \pi^\prime$ in the generic part of the discrete spectrum 
are one by  \cite[Theorem~2.5]{CZ} (see also \cite{KMSW, Mok}).
\end{proof}
%
Thus we are reduced to proving the implication
(1) $\Longrightarrow$ (2).

We shall use an argument similar to the proof
of \cite[Proposition~5]{FM1}
and the  epsilon dichotomy for the local theta correspondence of the unitary dual pairs in the equal rank case.
Konno and Konno~\cite{KK} studied a relation between non-vanishing of local theta correspondence and 
$\varepsilon$-factor, which essentially follows from the computation by Paul~\cite{Pau} 
(see Remark~\ref{IP rem} as for the comparison between our local theta lifts
and those in Paul~\cite{Pau}). 
For our purpose, we shall reformulate their result as follows.
%
%
%
\begin{proposition}
\label{dicho archi}
Let $V_{p,q}$ (resp. $V_{p^\prime,q^\prime}$) be a
skew-Hermitian (resp. Hermitian) space over $\mC$ with 
a skew-Hermitian
(resp. Hermitian ) form
of signature $(p,q)$ (resp. $(p^\prime,q^\prime)$) and $p+q=2n$ (resp. $p^\prime+q^\prime=2n$).
Let $\sigma$ be an irreducible admissible representation of 
$\mathrm{U}(V_{p,q})$,
and $\tau$  a non-trivial additive character of $\mR$.
Denote $\tau_\mC = \tau \circ \mathrm{tr}_{\mC \slash \mR}$.
Let $\xi_1$ and $\xi_2$ be characters of $\mC^\times$ such that $\xi_i = \chi_{m_i}$
with $m_1 \equiv m_2 \equiv 0$ (mod $2$).
Here for $a\in\mathbb Z$, $\chi_a$ 
is a character of $\mathbb C^\times$ defined by
\[
\chi_{a}(z) = \bar{z}^{-a} (z \bar{z})^{a\slash 2}.
\]
We note that $\chi_a\mid_{\mathbb R^\times}
=\left(\mathrm{sgn}\right)^a$.

Suppose that
 the theta lift $\theta_{\tau, (\xi_1, \xi_2)}(\sigma)$ of $\sigma$ to $\mathrm{U}(V_{p^\prime,q^\prime})$ is non-zero.
 Then we have
\begin{equation}\label{e: archimedean dichotomy}
\varepsilon \left(\frac{1}{2}, \sigma \times \xi_1^{-1}, \tau_\mC \right)
= \omega_{\sigma}(-1) \xi_1(i)^{2n} \varepsilon(V_{p,q}) \varepsilon(V_{p^\prime, q^\prime})
\end{equation}
where $\varepsilon(V_{p,q})$ and $ \varepsilon(V_{p^\prime,q^\prime})$ denote the Hasse invariants of $V_{p,q}$ and $V_{p^\prime,q^\prime}$, respectively, 
 and $\omega_{\sigma}$ denotes the central character of $\sigma$.
\end{proposition}
%
\begin{proof}
First we note that the identity \eqref{e: archimedean dichotomy}
in the general case readily follows from the one in
the case of (limit of) discrete series by 
\cite[Theorem~6.1]{Pau} and 
the multiplicativity of $\varepsilon$-factors.

Thus suppose that $\sigma$ is limit of discrete series.
We may take the Harish-Chandra parameter of $\sigma$ so that it is of the form
\begin{multline*}
\left(\frac{m_1}{2}, \dots, \frac{m_1}{2} ; \frac{m_1}{2}, \dots, \frac{m_1}{2} \right) 
\\
-([a_1]_{k_1}, \dots, [a_r]_{k_r}, [b_1]_{\ell_1}, \dots, [b_s]_{\ell_s}; [a_1]_{\bar{k}_1}, \dots, [a_r]_{\bar{k}_r}, [b_1]_{\bar{\ell}_1}, \dots, [b_s]_{\bar{\ell}_s})
\end{multline*}
where $(\sum_{x=1}^r k_i )+ (\sum_{y=1}^s\ell_y)=p$, $(\sum_{x=1}^r \bar{k}_i )+ (\sum_{y=1}^s\bar{\ell}_y)=q$,
$[a]_k$ denotes $k$-tuple $(a, \dots, a)$ and  $a_i, b_i \in \mZ + \frac{1}{2}$ satisfying 
\[
\varepsilon_{\tau}(a_i) >0, \quad  \varepsilon_{\tau} (b_i) <0
\]
and
\[
|a_1|> |a_2|>\cdots > |a_r|, \quad |b_1| > |b_2| > \cdots >|b_s|.
\]
Here, for $a \in \mZ+\frac{1}{2}$, we introduce the sign
\begin{equation}
\label{epsilon sgn}
\varepsilon_{\tau}(a) = \varepsilon\left( \frac{1}{2}, \chi_{2a}, \tau_\mC \right) \chi_{2a}(-i) =  \varepsilon_\tau  \mathrm{sgn}(a) 
\end{equation}
where we write $\tau(x) = e^{d_\tau  x}$ with $d_\tau \in i\, \mR$ and 
\[
\varepsilon_\tau =\frac{d_\tau}{|d_\tau|\, i} \in \{\pm 1\}.
\]
Then the Langlands parameter of $\sigma$
is given by
\[
\left( \oplus_{i=1}^r k_i \chi_{m_1-2a_i} \right) \oplus \left( \oplus_{i=1}^s \ell_i \chi_{m_1-2b_i} \right)
\oplus \left( \oplus_{i=1}^r \bar{k}_i \chi_{m_1-2a_i} \right)
\oplus \left( \oplus_{i=1}^s \bar{\ell}_i \chi_{m_1-2b_i}  \right)
\]
(for example, see Atobe~\cite[Theorem~2.1]{HA}).
Hence, we have 
\[
\varepsilon(s, \sigma \times \xi_1^{-1}, \tau_\mC)  = \prod_{i=1}^r  \varepsilon(s,  \chi_{-2a_i}, \tau_\mC)^{k_i+\bar{k}_i} 
 \prod_{i=1}^s  \varepsilon(s,  \chi_{-2b_i}, \tau_\mC)^{\ell_i+\bar{\ell}_i}.
\]
Hence, by \eqref{epsilon sgn}, we obtain
\begin{align}
\label{dicho pf 1}
&\varepsilon \left(\frac{1}{2}, \sigma \times \xi_1^{-1}, \tau_\mC \right) 
\\
=& \prod_{x=1}^r (\chi_{2a_x}(i)  \varepsilon_\tau(a_x) )^{k_x+\bar{k}_x} \times
 \prod_{y=1}^s (\chi_{2b_y}(i)  \varepsilon_\tau(b_y) )^{\ell_y+\bar{\ell}_y}\notag
 \\
 =&\prod_{x=1}^r \chi_{2a_x}(i)^{k_x+\bar{k}_x}   \prod_{y=1}^s \chi_{2b_y}(i)^{\ell_y+\bar{\ell}_y}
 \prod_{y=1}^s (-1)^{\ell_y+\bar{\ell}_y}.\notag
 \end{align}
 \color{black}
Moreover, we have
\begin{align}
\label{dicho pf 2}
\omega_\sigma(-1) &=\prod_{x=1}^r \chi_{m_1-2a_x}(i)^{k_x+\bar{k}_x}    \prod_{y=1}^s \chi_{m_1-2b_y}(i)^{\ell_y+\bar{\ell}_y}
\\
&=
i^{2m_1 n} 
\prod_{x=1}^r \chi_{2a_x}(i)^{k_x+\bar{k}_x}   \prod_{y=1}^s \chi_{2b_y}(i)^{\ell_y+\bar{\ell}_y}
\notag
\\
&=
\xi_1(i)^{2n} 
\prod_{x=1}^r \chi_{2a_x}(i)^{k_x+\bar{k}_x}   \prod_{y=1}^s \chi_{2b_y}(i)^{\ell_y+\bar{\ell}_y}.
\notag
\end{align}
For simplicity, we set 
\[
k=\sum_{x=1}^r k_x, \, \bar{k}=\sum_{x=1}^r \bar{k}_x, \,
\ell= \sum_{y=1}^s \ell_y, \, \bar{\ell} = \sum_{y=1}^s \bar{\ell}_y.
\]
Then by \cite[Theorem~6.1]{KK}, we know that the theta lift $\theta_{\tau, (\xi_1, \xi_2)}(\sigma)$ is non-zero if and only if 
\[
p^\prime =k+ \bar{\ell}.
\]
On the other hand, from the definition, 
we have $p=k+\ell$.
Therefore, if $p^\prime = k+\overline{\ell}$, then we have
\begin{align}
\label{dicho pf 3}
 &\prod_{y=1}^s (-1)^{\ell_i+\bar{\ell}_i} = (-1)^{\ell+\bar{\ell}} = (-1)^{k+\ell} (-1)^{k+\bar{\ell}}
 =(-1)^{n-p} (-1)^{n-p^\prime}
 \\
  = &\varepsilon(V_{p,q}) \,\varepsilon(V_{p^\prime, q^\prime})
  \notag
\end{align}
since $(-1)^{n-p} =  \varepsilon(V_{p,q})$ and $(-1)^{n-p^\prime} = \varepsilon(V_{p^\prime, q^\prime})$.
Then our claim for $\sigma$ readily follows from \eqref{dicho pf 1}, \eqref{dicho pf 2} and \eqref{dicho pf 3}. 
%
\end{proof}
\color{black}
\begin{proof}[Proof of (1) implies (2)]
Suppose that
\[
L\left(\frac{1}{2},\pi\times \Lambda\right)\ne 0.
\]

By Mok~\cite{Mok} and Kaletha, Minguez, Shin and White~\cite{KMSW}, we have the base change lift of $\pi$
to $\mathrm{GL}_{2n}(\mA_E)$. Then the global descent method by Ginzburg, Rallis and Soudry~\cite{GRS} shows that 
we obtain a globally 
$\psi_{N_n, (e,e)_V}$-generic
irreducible cuspidal automorphic representation $\pi^\circ$ of $\mathbb{G}_n^-(\mA)$
which is nearly equivalent to $\pi$ since $\mathbb{G}_n^- \simeq \mathbb{G}_n$ by \eqref{pm iso}.
Then $\pi$ and $\pi^\circ$ have the same $A$-parameter
as in the proof of Proposition~\ref{opp GGP prp},
and thus $\pi$ and $\pi^\circ$ has the same $L$-parameter and we obtain $L \left(\frac{1}{2}, \pi^\circ \times \Lambda \right) \ne 0$.

Let us take $\xi \in E^\times$ such that $\bar{\xi} = -\xi$.
Let $v$ be a place of $F$ which is non-split in $E$.
Let $V^\prime_v$ be a 
$2n$-dimensional 
Hermitian space over $E_v$ such that 
the local theta lift $\theta(\pi_v^\circ, \psi_v^{-1}, \chi_{\Lambda_v^{-1}}^\Box)$ of $\pi^\circ_v$ to $\mathrm{U}(V^\prime_v)$ is not zero.
Note that such $V_v^\prime$ exists because of Gan and Ichino~\cite[Theorem~11.1]{GI1}
and Paul~\cite[Theorem~0.1]{Pau} (cf \cite{HA}).
Then by Harris, Kudla and Sweet~\cite[Theorem~6.1]{HKS} and Gan
and Ichino~\cite[Theorem~11.1]{GI1} when $v$ is finite and 
by Proposition~\ref{dicho archi} when $v$ is real, we have
%
\begin{equation}
\label{dicho eq 1}
\varepsilon \left(\frac{1}{2}, \pi^\circ_v \times \Lambda_v, \psi_{F_v}^{-1} \right)
= \omega_{\pi_v^\circ}(-1) \Lambda_v(\xi)^{-2n} \varepsilon(V_v^\prime) \varepsilon(\mathbb{W}_v)
\end{equation}
\color{black}
where $\varepsilon(V_v^\prime)$ and $\varepsilon(\mathbb{W}_v)$ denote the Hasse invariants of $V_v^\prime$ and $\mathbb{W}$, respectively.

Let $v$ be a place of $F$ which splits in $E$. In this case, we put $ \varepsilon(V_v^\prime)= \varepsilon(\mathbb{W})=1$.
Then $\pi_v^\circ$ is an irreducible representation of $\mathrm{GL}_{2n}(F_v)$,
and the character $\Lambda_v$ is a character of $F_v^\times \times F_v^\times$.
Since $\Lambda_v((a, a))=1$, we may write this character as $(\mu, \mu^{-1})$ with a character $\mu$ of $F_v^\times$.
Then we know that the theta lift of $\pi^\circ_v$ to $\mathrm{GL}_{2n}(F_v)$ is isomorphic to $\pi^\circ_v$, in particular non-zero.
Further, by the local functional equation of local $L$-factors, we have
\[
\varepsilon \left(\frac{1}{2}, \pi^\circ_v \times \mu, \psi_{F_v}^{-1} \right)
\varepsilon \left(\frac{1}{2}, (\pi^\circ_v)^\vee \times \mu^{-1}, \psi_{F_v}^{-1} \right)
= \omega_{\pi_v^\circ}(-1).
\]
Since we have 
\[
\varepsilon \left(\frac{1}{2}, \pi_v^\circ, \times \Lambda_v, \psi_{F_v}^{-1} \right)=
\varepsilon \left(\frac{1}{2}, \pi^\circ_v \times \mu, \psi_{F_v}^{-1} \right)
\varepsilon \left(\frac{1}{2}, (\pi^\circ_v)^\vee \times \mu^{-1}, \psi_{F_v}^{-1} \right),
\]
we get
\begin{equation}
\label{dicho eq 2}
\varepsilon \left(\frac{1}{2}, \pi^\circ_v \times \Lambda_v, \psi_{F_v}^{-1} \right)
= \omega_{\pi_v^\circ}(-1) \mu(-1)^{2n} \varepsilon(V_v^\prime) \varepsilon(\mathbb{W}_v).
\end{equation}

Since $L \left(\frac{1}{2}, \pi^\circ \times \Lambda \right) \ne 0$,
we should have $\prod_v \varepsilon \left(\frac{1}{2}, \pi^\circ_v \times \Lambda_v, \psi_v^{-1} \right) =1$.
Thus, by \eqref{dicho eq 1} and \eqref{dicho eq 2}, we get
\[
\prod_v \varepsilon(V_v^\prime) =1.
\]
By the Hasse principle, there is a Hermitian space $V^\prime$ over $E$
such that $V^\prime \otimes F_v$ is isomorphic to $V_v^\prime$.
Then we put $G^\prime = \mathrm{U}(V^\prime)$.
Let $v$ be a non-split real place of $F$.
Then by \cite[Theorem~0.1, 6.1 (a)]{Pau} (see also \cite[Theorem~3.13]{HA}),
$G^\prime(F_v)$ should be one of
\[
\mathrm{U}(n, n), \quad \mathrm{U}(n+1, n-1), \quad  \text{or} \quad \mathrm{U}(n-1, n+1).
\]
Moreover, when $v$ is a finite place which is non-split in $E$,
it is clear that the Witt index of $V_v^\prime$ is at least $n-1$.
Hence, we should have $G^\prime \in \mathcal{G}_n$.
Let us denote by $\Pi =\Theta(\pi^\circ, \psi^{-1}, \chi_{\Lambda^{-1}}^\Box)$
the theta lift of $\pi^\circ$ to $\mathrm{U}(V^\prime)$.
From our choice of $V_v^\prime$, the local theta lifts $\theta(\pi^\circ_v, \psi_v^{-1}, \chi_{\Lambda_v^{-1}}^\Box) \ne 0$
and we have $L \left(\frac{1}{2}, \pi^\circ \times \Lambda \right) \ne 0$. Then by Theorem~\ref{yam thm}, we have $\Pi \ne 0$.
Further, from Corollary~\ref{vanishing cor split} and Rallis' tower property,
$\Pi$ should be cuspidal.

Let us consider the theta lift $\Theta(\Pi, \psi, \chi_{\Lambda}^\Box)$ of $\Pi$
to $\mathbb{G}_n^-(\mA)$.  We note that at a split place $v$, we know that $\Pi_v \simeq \pi^\circ_{v}$ (for example, see \cite[Th\'{e}or\`{e}m~1]{Min}).
Hence we have
\begin{equation}\label{e:gamma identity}
\gamma(s, \Pi_v \times \Lambda_v, \psi_v) = \gamma(s, \pi_v^\circ \times \Lambda_v, \psi_v).
\end{equation}
Moreover
Gan and Ichino~\cite[Theorem~11.5]{GI1} proved \eqref{e:gamma identity}
 for every finite place $v$ of $F$.
Since we define local $L$-factors $L\left(s, \Pi_v \times \Lambda_v \right)$ and $L \left(s, \pi^\circ_v \times \Lambda_v \right)$
using the $\gamma$-factors as in \cite{LR2}, we have 
\begin{equation}
\label{L match}
L\left(s, \Pi_v \times \Lambda_v \right)=L \left(s, \pi^\circ_v \times \Lambda_v \right)
\end{equation}
at split places and finite places.
At real non-split places,  we have an explicit description of local theta lift 
by Paul~\cite{Pau} (cf. Ichino~\cite[Theorem~4.1, Lemma~8.1]{Ich20}).
Here we refer to Remark~\ref{IP rem} for the notational difference with \cite{Ich20,Pau}.
Then by Adams and Vogan~\cite{AV},
we see that $\pi_v^\circ$ and $\Pi_v$ have the same local $L$-parameter
and   \eqref{L match} holds also at real non-split places
since our local $L$-factors coincide with the $L$-factors defined by local Langlands parameters by \cite{Yam}.
Thus we obtain
\[
L\left(\frac{1}{2}, \Pi \times \Lambda \right) = L \left(\frac{1}{2}, \pi^\circ \times \Lambda \right) \ne 0.
\]
By \cite[Proposition~13.20, 13.22, 15.10]{GKT} and the Howe duality, 
$\theta(\Pi_v, \psi_v, \chi_{\Lambda_v}^\Box) \simeq \pi_v^\circ$.
In particular, $\theta(\Pi_v, \psi_v, \chi_{\Lambda_v}^\Box)$ is non-zero. 
Then by Theorem~\ref{yam thm}, we see that $\Theta(\Pi, \psi, \chi_{\Lambda}^\Box) \ne 0$,
and it is cuspidal by Corollary~\ref{vanishing cor split} and Rallis' tower property as above.
Further, by Wu~\cite[Theorem~5.1]{Wu}, we have $\Theta(\Pi, \psi, \chi_{\Lambda}^\Box) = \pi^\circ$.

Therefore, by Proposition~\ref{pullback whittaker prp}, $\Pi$ should have the $(e, \psi, \Lambda)$-Bessel period.
As we noted above, $\Pi_v \simeq \pi^\circ_v$ at split place $v$. Moreover, 
we have $\Pi_v \simeq \pi^\circ_v$ by Kudla~\cite{Ku86} when $v \nmid 2$
and $\Pi_v, \Lambda_v, \psi_v$ are unramified.
Hence, $\Pi$ is nearly equivalent to $\pi^\circ$ and to $\pi$.
Finally, we note that as in the proof of (2)$\Longrightarrow$(1), we see that $\Pi$ and $\pi$ have the same local $L$-parameter.
In particular, $\Pi$ has a generic $A$-parameter.
Hence, as $G^\prime$ and $\pi^\prime$ in Theorem~\ref{opp GGP}, we may take $\mathrm{U}(V^\prime)$ and $\Pi$.
\end{proof}
%
%
%
%
%
%
%
%
%
%
%
%
%
%
%
%
%
%
%
%
%
%
%
%
%
%
%
%
%

%
%
%
%
%
%
%
%
%
%
%
%
%
%
%
%
%
%
%
%
\section{Refined Gan--Gross--Prasad conjecture for $(\mathrm{U}(2n), \mathrm{U}(1))$}
\label{s:proof ref}
In this section, we prove Theorem~\ref{refined ggp thm}, namely we prove the refined Gan--Gross--Prasad conjecture
 for Bessel periods for $(\mathrm{U}(2n), \mathrm{U}(1))$
under certain assumptions. Throughout this section, 
we set $G = \mathrm{U}(V) \in \mathcal{G}_n$.
\begin{lemma}
For an irreducible cuspidal tempered automorphic representation $\pi$ of $G(\mA)$ with $G \in \mathcal{G}_n$
such that $B_{e, \psi, \Lambda}\equiv 0$  on $V_\pi$, Theorem~\ref{refined ggp thm} holds.
\end{lemma}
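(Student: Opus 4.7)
The plan is to verify that both sides of \eqref{formula uni} vanish under the hypothesis. The left-hand side vanishes immediately because $B_{e,\psi,\Lambda} \equiv 0$ on $\pi$, so the task reduces to showing that the right-hand side also vanishes. My approach is to use the dichotomy provided by Theorem~\ref{opp GGP}: since condition (b) of that theorem fails for $\pi$, the equivalent condition (a) must also fail, so either (i) $L(\tfrac12,\pi\times\Lambda)=0$, or else (ii) there exists a place $v_{0}$ of $F$ with $\operatorname{Hom}_{R_{e,v_{0}}}(\pi_{v_{0}},\chi_{e,\Lambda,v_{0}})=0$.

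I would then treat the two cases separately. In case (i), the central $L$-value appears explicitly as a numerator factor of the right-hand side of \eqref{formula uni}. Since $\pi$ is tempered, the normalizing local $L$-factors in \eqref{e: normalized} are finite and nonzero at the relevant points, so each normalized local Bessel period $\alpha_v^{\natural}(\varphi_v)$ is well defined, and the right-hand side vanishes on account of the factor $L(1/2,\pi\times\Lambda)$. In case (ii), I would first observe that $v_{0}$ cannot be a split place: at a split place, $\pi_{v_{0}}$ is a tempered representation of $\mathrm{GL}_{2n}(F_{v_{0}})$ and the corresponding local Bessel Hom space is known to be one-dimensional. Thus $v_{0}$ is non-split, and the Beuzart-Plessis equivalence~\eqref{e: multiplicity one}--\eqref{e: multiplicity one2} then forces $\alpha_{v_{0}}(\phi_{v_{0}},\phi_{v_{0}})=0$ for every smooth vector $\phi_{v_{0}}\in V_{\pi_{v_{0}}}^{\infty}$. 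Consequently $\alpha_{v_{0}}^{\natural}(\varphi_{v_{0}})=0$, and the product over places on the right-hand side of \eqref{formula uni} collapses to zero.

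The main obstacle I anticipate is the split-place assertion in case (ii), namely ruling out a split place with a vanishing local Bessel Hom for our tempered $\pi_{v_{0}}$; this should be a standard fact but deserves to be spelled out. Granting it, the proof is essentially a bookkeeping argument combining the global dichotomy of Theorem~\ref{opp GGP} with the local Hom/period equivalence \eqref{e: multiplicity one}--\eqref{e: multiplicity one2}, and nothing further is required beyond the definitions of $\alpha_v^{\natural}$.
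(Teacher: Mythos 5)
Your argument is correct and is, in substance, the same argument the paper invokes: the paper simply refers to the analogue in \cite[Corollary~1]{FM2} together with the local Gan--Gross--Prasad theorem of Beuzart-Plessis \cite{BP1,BP2} and the endoscopic classification \cite{KMSW,Mok} --- precisely the ingredients that are packaged into the equivalence $(a)\Leftrightarrow(b)$ of Theorem~\ref{opp GGP}, from which your dichotomy is drawn. Both sides of \eqref{formula uni} vanish: the left side because $B_{e,\psi,\Lambda}\equiv 0$, and the right side because either $L(\tfrac12,\pi\times\Lambda)=0$ or some normalized local period $\alpha_{v_0}^{\natural}$ vanishes.

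On the split-place point that you flag as the ``main obstacle'': it is not actually needed. For any place $v$ and any fixed $\phi_v'$, the functional $\phi_v\mapsto \alpha_v(\phi_v,\phi_v')$ is $(R_{e,v},\chi_{e,\Lambda,v})$-equivariant and hence lies in $\operatorname{Hom}_{R_{e,v}}(\pi_v,\chi_{e,\Lambda,v})$; this is the trivial direction of \eqref{e: multiplicity one} and holds at every place, split or not. So if the Hom space vanishes at some $v_0$, then $\alpha_{v_0}\equiv 0$ and hence $\alpha_{v_0}^{\natural}\equiv 0$ directly, with no need to exclude split places. That said, your alternative route --- that at a split place the local Bessel Hom for a tempered representation of $\mathrm{GL}_{2n}(F_{v_0})$ against $\mathrm{GL}_1$ is exactly one-dimensional --- is also correct and standard (local Rankin--Selberg theory of Jacquet--Piatetski-Shapiro--Shalika, or equivalently the observation that the Vogan packet is a singleton); it would just call for a reference that the paper does not spell out.
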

\begin{proof}
This is proved by an   argument 
similar to  the proof of \cite[Corollary~1]{FM2},
using the
local Gan--Gross--Prasad conjecture by \cite{BP1, BP2} and the endoscopic classifications by \cite{KMSW, Mok}.
\end{proof}
%
%
%
Hence we  assume that $B_{e, \psi, \Lambda}\not\equiv 0$  on 
$V_\pi$.
We prove Theorem~\ref{refined ggp thm} by 
 an argument similar to the proof of \cite[Theorem~1]{FM2}.
As stated in the beginning of \cite[Section~2.4]{FM2}, 
 it suffices for us to prove \eqref{formula uni}
for $\varphi = \otimes_v\, \varphi_v \in V_\pi$ such that $B_{e, \psi, \Lambda}(\varphi) \ne 0$
because of \cite[Theorem~5]{BP2} and the multiplicity one theorem \eqref{uniqueness}.

In this case, by Proposition~\ref{pullback whittaker prp}, $\sigma \coloneqq \Theta_{V, \mathbb{W}_n}(\pi, \psi, \chi_\Lambda^\Box)$
is a $\psi_{N_n, \lambda}$-generic irreducible cuspidal automorphic representation of $\mathbb{G}_n^-(\mA)$
where $\lambda=\left(e,e\right)_V$.
Then we have $\sigma=\otimes_v\,\sigma_v$ with $\sigma_v = \theta(\pi_v, \psi_v, \chi_{\Lambda_v}^\Box)$ by the Howe duality.
Let 
\[
\theta_{\psi_v, \chi_{\Lambda_v}^\Box}\colon\mathcal S\left(V\left(F_v\right)^n\right)\otimes V_{\pi_v}\to
V_{\sigma_v}
\]
be a non-zero $G(F_v)\times\mathbb{G}_n^-\left(F_v\right)$-equivariant
linear map, which is unique up to multiplication by a scalar.
Since the mapping
\[
\mathcal S\left(V\left(\mA\right)^n\right)\otimes V_\pi
\ni\left(\phi^\prime,\varphi^\prime\right)\mapsto
\theta_{\psi, \chi_\Lambda^\Box}^{\phi^\prime}\left(\varphi^\prime\right)\in V_\sigma
\]
is $G(F_v)\times \mathbb{G}_n^-\left(F_v\right)$-equivariant  at any place $v$,
by the uniqueness of $\theta_{\psi_v, \chi_{\Lambda_v}^\Box}$, we may adjust $\left\{\theta_{\psi_v, \chi_{\Lambda_v}^\Box}\right\}_v$ so that 
\[
\theta_{\psi, \chi_\Lambda^\Box}^{\phi^\prime}\left(\varphi^\prime\right)=\otimes_v\,
\theta_{\psi_v, \chi_{\Lambda_v}^\Box}\left(\phi_v^\prime\otimes\varphi_v^\prime\right)
\]
for $\varphi^\prime=\otimes_v\,\varphi_v^\prime\in V_\pi$ \, $\phi^\prime=\otimes_v\, \phi_v^\prime
\in\mathcal S\left(V\left(\mA\right)^n\right)$.

%
%
%
%
\subsection{Rallis inner product formula}
Fix a polarization 
$\mathbb{W} = \mathbb{W}_n \coloneqq Y_n^+ \oplus Y_n^-$. 
Then we realize the Weil representation $\omega_{\psi, \chi_\Lambda^\Box}$ of 
$\mathrm{U}(V)(\mA) \times \mathrm{U}(\mathbb{W}_n)(\mA)$
on the space $\mathcal{S} \left( \left(V \otimes Y_n^+ \right) (\mA) \right)$.
Let $V^\Box$ be the Hermitian space $V\oplus \left(-V\right)$,
i.e. 
$V^\Box$ is a direct sum $V\oplus V$
as a vector space
and its  Hermitian form $\left(\, \,, \,\,\right)_{V^\Box}$
on $V^\Box$
is defined by
\[
\left(v_1\oplus v_2,v_1^\prime\oplus v_2^\prime\right)_{V^\Box}
\coloneqq\left(v_1,v_1^\prime\right)_{V}-\left(v_2,v_2^\prime\right)_V.
\]
Let $V^\Box_{\pm}$ be maximal isotropic subspaces of $V^\Box$
defined by
\[
V^\Box_+\coloneqq\left\{v\oplus v\in V^\Box \colon v\in V\right\}
\quad
\text{and}
\quad
V^\Box_-\coloneqq\left\{v\oplus -v\in V^\Box \colon v\in V\right\},
\]
respectively.
\color{black}
We note that there is a natural embedding
\[
\iota\colon\mathrm{U}\left(V\right)\times\mathrm{U}\left(-V\right)
\hookrightarrow
\mathrm{U}\left(V^\Box\right)
\quad
\text{such that}\quad 
\iota\left(g_1,g_2\right)\left(v_1\oplus v_2\right)=
g_1v_1\oplus g_2 v_2.
\]
For $\phi \in \mathcal{S}\left(\left(V^\Box\otimes Y_n^+\right)\left(\mA\right)\right)$,
its partial Fourier transform $\hat{\phi} \in \mathcal{S}\left(\left(V^\Box_+ \otimes \mathbb{W}\right)\left(\mA\right)\right)$ 
is defined by
\[
\hat{\phi}(u \oplus v)\coloneqq \int_{(V^\Box_- \otimes Y_n^+)(\mA)} \phi(x \oplus u) \psi (\langle x, v \rangle ) \, dx
\]
where $u \in (V^\Box_+ \otimes Y_n^+)(\mA)$ and $v \in (V^\Box_+ \otimes Y_n^-)(\mA)$, and $\langle \,, \, \rangle$
denotes the pairing on $(V^\Box_- \otimes Y_n^+)(\mA) \times (V^\Box_+ \otimes Y_n^-)(\mA)$ given by the pairing on $V^\Box$ and $\mathbb{W}$.
Then there exists a $\mathrm{U}(V, \mA) \times \mathrm{U}(-V, \mA)$-intertwining map
\[
\tau\colon \mathcal{S}\left(\left(V \otimes Y^+_n\right)\left(\mA\right)\right) \hat{\otimes}\,
 \mathcal{S}\left(\left(\left(-V\right) \otimes Y^+_n\right)\left(\mA\right)\right)
 \rightarrow 
 \mathcal{S}\left(\left(V^\Box_+ \otimes \mathbb{W}\right)\left(\mA\right)\right)
\]
with respect to the Weil representations,
obtained by composing the natural map
\[
\mathcal{S}\left(\left(V \otimes Y^+_n\right)\left(\mA\right)\right) \hat{\otimes}\,
 \mathcal{S}\left(\left(\left(-V\right) \otimes Y^+_n\right)\left(\mA\right)\right)
 \rightarrow 
 \mathcal{S}\left(\left(V^\Box \otimes Y_n^+\right)\left(\mA\right)\right)
 \]
 with the partial Fourier transform
\[
\mathcal{S}\left(\left(V^\Box\otimes Y_n^+\right)\left(\mA\right)\right)
\rightarrow
\mathcal{S}\left(\left(V^\Box_+ \otimes \mathbb{W}\right)\left(\mA\right)\right).
\]
 Namely we have 
 \begin{align*}
 &\tau\left(\omega_{\psi, \chi_\Lambda^\Box, V, \mathbb{W}}\left(g_1\right)\phi_{+}
 \otimes\omega_{\psi, \chi_\Lambda^\Box, -V, \mathbb{W}}\left(g_2\right)\phi_{-}\right)
 \\
 =&\omega_{\psi, \chi_\Lambda^\Box, V^\Box, \mathbb{W}}\left(\iota\left(g_1,g_2\right)\right)
 \tau\left(\phi_{+}\otimes\phi_{-}\right)
 \end{align*}
 for $\left(g_1,g_2\right)\in\mathrm{U}(V, \mA) \times \mathrm{U}(-V, \mA)$
 and
 $\phi_{\pm}\in\mathcal{S}\left(\left(\left(\pm
 V\right) \otimes Y^+_n\right)\left(\mA\right)\right)$.
 We also consider the local counterparts of $\tau$.
 %
 %
 %
 \subsubsection{Local doubling zeta integrals}
 Let $P$ be the maximal parabolic subgroup of
 $\mathrm{U}\left(V^\Box \right)$
 defined as the stabilizer of the isotropic subspace
 $V_+^\Box$.
 Then the Levi subgroup of $P$
 is isomorphic to $\mathrm{Res}_{E \slash F} \,\mathrm{GL}\left(V\right)$.

 At each place $v$ of $F$, we consider the degenerate principal series
 representation 
 \[
 I_v\left(s\right)\coloneqq\mathrm{Ind}_{P\left(F_v\right)}^{
 \mathrm{U}\left(V^\Box, F_v\right)}\, \Lambda_v^{-1}\, |\,\cdot\,|_v^s
 \quad\text{for $s\in\mathbb C$.}
 \]
 Here the induction is normalized and
  $\Lambda_v^{-1}\, |\,\cdot\,|_v^s$ denotes the character of $P\left(F_v\right)$
given by $g_v \mapsto \Lambda_v^{-1}(\det g_v) |\det g_v |_v^s$ on its Levi subgroup 
  $\mathrm{Res}_{E \slash F}\mathrm{GL}\left(V\right) \left(F_v \right)$
  and trivial on its unipotent radical.
  
  For $\varphi_v,\varphi_v^\prime\in V_{\pi_v}$
  and $\Phi_v\in I_v\left(s\right)$, the local doubling zeta integral
    is defined by
  \begin{equation}\label{e: local doubling}
  Z_v\left(s,\varphi_v,\varphi_v^\prime,\Phi_v,\pi_v\right)\coloneqq
  \int_{G(F_v)}\langle\pi_v\left(g_v\right)\varphi_v,\varphi_v^\prime\rangle_v
  \,\Phi_v\left(\iota\left(g_v, I_v\right)\right)\,dg_v
 \end{equation}
 where $I_v$ denotes the unit element of $G(F_v)$.
 We recall that the integral~\eqref{e: local doubling} converges absolutely
 when $\mathrm{Re}\left(s\right)> -\frac{1}{2}$ by
 Yamana~\cite[Lemma~7.2]{Yam}
 since $\pi_v$ is tempered.
 
 For $\phi_v\in\mathcal{S}\left(\left(V^\Box_+ \otimes \mathbb{W}\right)
 \left(F_v\right)\right)$, we define $\Phi_{\phi_v}
 \in I_v\left(0\right)$
 by
 \begin{equation}
 \label{local section}
 \Phi_{\phi_v}\left(g_v\right)=
 \left(\omega_{\psi_v, \chi_{\Lambda_v}^\Box}\left(g_v\right)\phi_v\right)
 \left(0\right)
 \quad\text{for}\quad g_v\in\mathrm{U}\left(V^\Box, F_v\right).
 \end{equation}
 Then we define $Z_v^\circ\left(\varphi_v, \phi_v,\pi_v\right)$
 for $0 \ne \varphi_v\in V_{\pi_v}$ and $\phi_v \in
 \mathcal{S}\left(\left(V \otimes Y_n^+\right)
 \left(F_v\right)\right)$ by
 \begin{multline}\label{e: normalized doubling}
Z_v^\circ\left(\varphi_v, \phi_v,\pi_v\right)
\coloneqq
\frac{\prod_{j=1}^{2n} L\left(j, \chi_{E, v}^j \right)}{L\left(1\slash 2, \pi_v \times \Lambda_v \right)}
\cdot \frac{1}{\langle\varphi_v,\varphi_v\rangle_v}
\\
\times
Z_v\left(0, \varphi_v,\varphi_v,\Phi_{\tau_v\left(\phi_v\otimes \phi_v\right)},
\pi_v\right).
\end{multline}
%
%
%
\subsubsection{Rallis inner product formula}   
%
Let $(\, , \,)$ denote the $\mathbb{G}_n^-\left(\mA\right)$-invariant Hermitian inner product on $V_\sigma$ given by
the Petersson inner product, i.e.
\begin{equation}\label{e: - Petersson}
\left(\varphi_1, \varphi_2\right)\coloneqq
\int_{\mathbb{G}_n^- \left(F\right)\backslash \mathbb{G}_n^-\left(\mA\right)}
\varphi_1\left(g\right)
\overline{\varphi_2\left(g\right)}\, dg
\qquad \text{for } \varphi_1, \varphi_2 \in V_\sigma,
\end{equation}
where we take  the Tamagawa measure  $dg$.

Recall that the Haar measure constant $C_G>0$
is defined by \eqref{e: measure comparison}.
Then we have the following Rallis inner product formula.
%
%
 \begin{theorem}[Lemma~10.1 in \cite{Yam}]
 \label{Rallis inner}
 For any non-zero decomposable vectors $\varphi=\otimes_v\, \varphi_v\in V_\pi$
 and $\phi=\otimes_v\, \phi_v\in 
 \mathcal{S}\left(\left(V \otimes Y_n^+\right)\left(\mA\right)\right)$,
 we have
 \begin{equation}\label{e: rallis inner product}
 \frac{\left(\theta_{\psi, \chi_\Lambda^\Box}^\phi\left(\varphi\right),\theta_{\psi,  \chi_\Lambda^\Box}^\phi\left(\varphi\right)
 \right)}{\langle\varphi,\varphi\rangle}
 =C_G\cdot
 \frac{L\left(1/2,\pi \times \Lambda \right)}{\prod_{j=1}^{2n}\, L\left(j, \chi_{E}^j \right)}
 \cdot\prod_v Z_v^\circ\left(\varphi_v, \phi_v,\pi_v\right)
 \end{equation}
 where $Z_v^\circ\left(\phi_v, \varphi_v,\pi_v\right)=1$
 for almost all $v$.
\end{theorem}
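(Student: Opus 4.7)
The plan is to prove the Rallis inner product formula by the standard seesaw-plus-doubling argument, which is how Yamana establishes it. The seesaw diagram in question places $(\mathrm{U}(V)\times\mathrm{U}(-V),\mathbb{G}_n^-)$ inside $(\mathrm{U}(V^\Box),\mathbb{G}_n^-\times\mathbb{G}_n^-)$, with the diagonal embedding $\mathbb{G}_n^-\hookrightarrow\mathbb{G}_n^-\times\mathbb{G}_n^-$ on one side and the embedding $\iota$ on the other. The Petersson inner product of two theta lifts on $\mathbb{G}_n^-(\mathbb{A})$ is then rewritten, via this seesaw, as a theta integral on the doubled group $\mathrm{U}(V^\Box)(\mathbb{A})$ paired against the matrix coefficient of $\pi$.

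First I would expand $(\theta_{\psi,\chi_\Lambda^\Box}^\varphi(\phi),\theta_{\psi,\chi_\Lambda^\Box}^\varphi(\phi))$ by inserting the definition \eqref{theta fct def} of the theta kernel on each factor. Using $\overline{\omega_{\psi,\chi_\Lambda^\Box,V,\mathbb{W}}}\simeq \omega_{\psi,\chi_\Lambda^\Box,-V,\mathbb{W}}$ and the intertwining map $\tau$ introduced just before the statement, the product of two theta kernels collapses into a single theta series $\theta_{\psi,\chi_\Lambda^\Box,V^\Box,\mathbb{W}}^{\tau(\varphi\otimes\varphi)}$ on $\mathrm{U}(V^\Box)(\mathbb{A})\times\mathbb{G}_n^-(\mathbb{A})$, restricted via $\iota$. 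After interchanging integrals (the cusp forms ensure absolute convergence, which is where temperedness is used), the $\mathbb{G}_n^-$-integral of the doubled theta kernel against the constant function $1$ is the object produced by the Siegel--Weil formula.

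Next I would invoke the regularized Siegel--Weil formula in the convergent range (or in Yamana's regularized form, since here we are in the equal-rank setting where convergence can fail); it identifies that $\mathbb{G}_n^-$-integral with the value at $s=0$ of a Siegel Eisenstein series $E(g,s,\Phi_{\tau(\varphi\otimes\varphi)})$ on $\mathrm{U}(V^\Box)(\mathbb{A})$ built from the section \eqref{local section}. Unfolding this Eisenstein series along the diagonal $G\hookrightarrow \mathrm{U}(V^\Box)$ via the Bruhat decomposition yields an integral of $\langle\pi(g)\phi,\phi\rangle\,\Phi_{\tau(\varphi\otimes\varphi)}(\iota(g,1))$ over $G(\mathbb{A})$, i.e.\ the global doubling zeta integral $Z(0,\phi,\phi,\Phi_{\tau(\varphi\otimes\varphi)},\pi)$. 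Factoring this global integral into local ones using the tensor decompositions of $\langle\,\cdot\,,\,\cdot\,\rangle$ and $\Phi_{\tau(\varphi\otimes\varphi)}$, then normalizing each factor by the ratio $L(1/2,\pi_v\times\Lambda_v)/\prod_{j=1}^{2n}L(j,\chi_{E,v}^j)$ as in \eqref{e: normalized doubling}, produces the product $\prod_v Z_v^\circ(\phi_v,\varphi_v,\pi_v)$ times the global $L$-ratio that appears on the right-hand side of \eqref{e: rallis inner product}. The Haar measure constant $C_G$ appears exactly once, coming from the decomposition $dg=C_G\prod_v dg_v$ used when passing from the global integral over $G(\mathbb{A})$ to the product of the local doubling integrals \eqref{e: local doubling}.

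The main obstacle is the Siegel--Weil step: for the equal-rank pair $(\mathrm{U}(V^\Box),\mathbb{G}_n^-)$ the naive theta integral may diverge, so one has to use the regularized Siegel--Weil formula (this is exactly what Yamana develops) and verify that the regularization is compatible with the subsequent unfolding. A secondary point is checking that $Z_v^\circ(\phi_v,\varphi_v,\pi_v)=1$ at almost all $v$, which requires the unramified computation of the doubling zeta integral (known by Li, Lapid--Rallis, Yamana) against the spherical section attached to the Gaussian, together with the compatibility of $\tau_v$ with the unramified vectors. Once these two inputs are in place, the factorization is immediate and yields \eqref{e: rallis inner product}.
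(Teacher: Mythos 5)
The paper does not prove this theorem itself: it is cited directly as Lemma~10.1 of Yamana~\cite{Yam}, so there is no internal argument to compare against. Your outline accurately reconstructs the argument given in that reference — the seesaw identity reducing $(\theta^\varphi(\phi),\theta^\varphi(\phi))$ to a theta integral on $\mathrm{U}(V^\Box)(\mA)$, the regularized Siegel--Weil formula converting that into a Siegel Eisenstein series value, the unfolding against $\langle\pi(g)\phi,\phi\rangle$ producing the global doubling zeta integral, and the factorization into normalized local integrals $Z_v^\circ$ with the unramified computation handling almost all places. You also correctly flag the genuine difficulties (the need for Yamana's regularized Siegel--Weil formula in the equal-rank setting, and the compatibility of the regularization with the unfolding step), which is exactly where the technical content of Yamana's proof resides; so this is the same approach as the cited source, not a different one.
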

%
\subsection{Lapid--Mao's conjecture on the  Whittaker periods for $\mathbb{G}_n^-(\mA)$}
\label{LM-conjecture}
Let us recall Lapid--Mao's conjecture concerning  an explicit formula for the Whittaker periods.
Recall that $(\sigma, V_\sigma)$ is an irreducible cuspidal $\psi_{N_n, \lambda}$-generic automorphic representation of $\mathbb{G}_n^-(\mA)$.
Let $\Sigma$ be the base change lift of $\sigma$ to $\mathrm{GL}_{2n}(\mA_E)$ constructed by 
Kim and Krishnamurthy~\cite{KK}. It is an isobaric sum $\sigma_1 \boxplus \cdots \boxplus \sigma_k$
where $\sigma_i$ is an irreducible cuspidal automorphic representation of $\mathrm{GL}_{n_i}(\mA_E)$
with $n_1 + \cdots +n_k = 2n$, such that $L^S(s, \sigma_i, \mathrm{As}^{-})$ has a simple pole at $s =1$.

Choose an 
Hermitian inner product $(\,,\,)_v$ on $V_{\sigma_v}$ 
at each place $v$ of $F$ so that 
the Petersson inner product \eqref{e: - Petersson}
becomes
\[
(\varphi_1, \varphi_2) = \prod_v (\varphi_{1, v}, \varphi_{2,v})_v
\]
for any decomposable vectors $\varphi_1 = \otimes_v \varphi_{1, v}, \varphi_2 = \otimes_v \varphi_{2, v} \in V_\sigma$.
At a finite place $v$, we define a local Whittaker period 
$I_v(\varphi_{1, v}, \varphi_{2, v})$ for $\varphi_{1, v}, \varphi_{2,v}  \in V_{\sigma_v}$ as
\[
I_v(\varphi_{1, v}, \varphi_{2, v})\coloneqq
 \int_{N_n(F_v)}^{st} ( \sigma_v(u)\varphi_{1, v}, \varphi_{2, v} )_v
 \, \psi_{N_n, \lambda, v}(u)^{-1} \, du
 \]
by the stable integral (see Lapid and Mao~\cite{LM} and Liu~\cite{Liu2}).
At an infinite place $v$, we define as follows.
 First
  we put $u_i=u_{i, i+1}$ $\,\left(1\le i\le n-1\right)$ and $u_{n} = u_{n,2n}$ for $u \in N_n$.
Then  for $\gamma \geq -\infty$, we define 
$N_{n, \gamma} \coloneqq \{ u \in N_n(F_v) \colon |u_i| \leq e^\gamma \text{ for $1 \leq i \leq n-1$} \}$.
 As in the definition of $\alpha_v(\phi_v, \phi_v)$
in \ref{local integral sec}, for $u_0 \in N_{n}(F_v)$, we define 
 \[
 I_{\varphi_{1,v}, \varphi_{2,v}}(u_0) \coloneqq \int_{N_{n, -\infty}} ( \sigma_v(u_0 u)\varphi_{1, v}, \varphi_{2, v} )_v \, du.
 \]
 This integral converges absolutely and,  
 by \cite[Proposition~3.10]{Liu2}, it
 gives a tempered distribution on 
 $N_{n}(F_v) \slash N_{n, -\infty}$.
Then we define 
\[
 I_v(\varphi_{1, v}, \varphi_{2, v})\coloneqq \widehat{I_{\varphi_{1, v}, \varphi_{2, v}}}(\psi_{N_n, \lambda, v}).
 \]
 When $\varphi_{1, v} = \varphi_{2, v}=\varphi_v$, we simply write
 \[
  I_{\varphi_{v}}(u_0) =  I_{\varphi_{v}, \varphi_{v}}(u_0)\quad \text{and}\quad I_v(\varphi_{v})=  I_v(\varphi_{v}, \varphi_{v}).
 \]
 Lapid and Mao~\cite[Theorem~5.5]{LM1} showed that there exists a non-zero constant $c_{\sigma_v}$ depending on $\sigma_v$ 
 (implicitly on the choice of measures and $\psi$)
 such that 
\begin{equation}
\label{conj pre}
\frac{|W_{\psi, \lambda}(\varphi)|^2}{( \varphi, \varphi )}
=2^{-k} \, \left(\prod_v c_{\sigma_v}^{-1}\right) \cdot \frac{\prod_{j=1}^{2n} L \left(j, \chi_E^j \right)}{L \left(1, \Sigma, \mathrm{As}^+ \right)} 
\,
\prod_v 
I_v^\natural(\varphi_v)
\end{equation}
for any non-zero decomposable vector $\varphi = \otimes \varphi_v \in V_\sigma$.
 Here $W_{\psi, \lambda}$ is defined with respect to the Tamagawa measure and we define
\begin{equation}
\label{def I_v}
I_v^\natural(\varphi_v) \coloneqq \frac{L \left(1, \Sigma_v, \mathrm{As}^+ \right)}{\prod_{j=1}^{2n} L \left(j, \chi_{E, v}^j \right)} 
\cdot
\frac{I_v(\varphi_v)}{( \varphi_v, \varphi_v )_v}.
\end{equation}
\begin{Conjecture}[Conjecture~1.2, 5.1 in \cite{LM}]
\label{main conj}
Let $\sigma$ and $\Sigma$ be as above.
Then $c_{\sigma_v} = \omega_{\sigma_v}(-1)$. In particular, 
for any non-zero decomposable vector $\varphi = \otimes_v \,\varphi_v \in V_\sigma$, we have
\begin{equation}
\label{whittaker formula ref}
\frac{|W_{\psi, \lambda}(\varphi)|^2}{( \varphi, \varphi )}
=2^{-k}\,  \frac{\prod_{j=1}^{2n} L \left(j, \chi_E^j \right)}{L \left(1, \Sigma, \mathrm{As}^+ \right)} 
\,\prod_v I_v^\natural(\varphi_v).
\end{equation}
\end{Conjecture}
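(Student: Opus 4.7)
The strategy is to reduce the conjectured formula on $\mathbb{G}_n^-(\mA)$ to the already established Whittaker period formula for $\mathrm{GL}_{2n}(\mA_E)$ of Lapid-Mao, via automorphic descent. Since $\sigma$ is $\psi_{N_n,\lambda}$-generic and cuspidal, its weak base change $\Sigma = \sigma_1 \boxplus \cdots \boxplus \sigma_k$ to $\mathrm{GL}_{2n}(\mA_E)$ exists by Kim-Krishnamurthy and has the required Asai pole property; conversely $\sigma$ can be recovered as the automorphic descent of $\Sigma$ in the Ginzburg-Rallis-Soudry framework, realized as a specific Fourier-Jacobi coefficient of a residue of an Eisenstein series attached to $\Sigma$.

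The first step is to express the global Whittaker coefficient $W_{\psi,\lambda}(\phi)$ of the descent $\sigma$ by unfolding the descent integral; the integration over the appropriate unipotent radical, together with a Fourier expansion, collapses the descent into an integral of the Whittaker coefficient of $\Sigma$ against a kernel that factors into local contributions. Applying Lapid-Mao's explicit formula for Whittaker periods on $\mathrm{GL}_{2n}(\mA_E)$ then yields a global identity of the same shape as \eqref{conj pre}, but with explicit local factors. Matching this identity against \eqref{conj pre} forces $\prod_v c_{\sigma_v}^{-1}$ to equal $\prod_v \omega_{\sigma_v}(-1)$. Invoking multiplicity one \eqref{uniqueness} together with the non-vanishing criterion \eqref{e: multiplicity one} for local Whittaker-type functionals then pins down the local constant at each place individually: one varies the test vector $\phi_v$ locally at one place while keeping it fixed at all other places, exploiting the fact that both sides of the putative identity transform identically under local scaling by the center of $\mathbb{G}_n^-(F_v)$.

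The main difficulty will arise at the archimedean places and at finite places lying over $2$. At such places the stable integral defining $I_v(\phi_v)$ is not given by a convergent integral, so explicit computation is significantly harder. This explains the assumptions in Theorem~\ref{refined ggp thm} that $\pi_v$ (hence the archimedean component of $\sigma$) be a discrete series, and that inertness holds at places dividing $2$: discrete series representations have tractable K-type decompositions and matrix-coefficient asymptotics, which allow the stable Whittaker integral to be evaluated on a sufficiently rich family of K-finite test vectors. A careful local archimedean analysis along the lines of Beuzart-Plessis' work on the local GGP conjecture, combined with the archimedean descent computation, should then establish $c_{\sigma_v} = \omega_{\sigma_v}(-1)$ in the required generality, completing the proof of the Conjecture under these hypotheses.
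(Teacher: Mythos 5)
The statement you set out to prove is labeled a \emph{Conjecture} in the paper (it is Lapid--Mao's Conjecture 1.2/5.1) and is not proved there. Immediately after stating it the authors remark that it ``is proved under certain assumptions by the second author in [Mo2, Corollary~1.1],'' and what is actually invoked in the sequel is Theorem~\ref{wh ex thm}, which asserts \eqref{whittaker formula ref} only under the hypotheses of Theorem~\ref{refined ggp thm}. So there is no proof in this paper to compare against; you have supplied an argument for a cited, not proved, statement.

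Reviewing your sketch on its own terms: the opening steps (realize $\sigma$ as the Ginzburg--Rallis--Soudry descent of $\Sigma$, unfold the Whittaker coefficient of the descent, and feed in Lapid--Mao's explicit formula on $\mathrm{GL}$) are broadly in the spirit of [Mo2]. The closing step, however, is a genuine gap. A global descent argument can at best reproduce a \emph{global} identity of the shape \eqref{conj pre}; and since \eqref{conj pre} is already a theorem of Lapid--Mao, this only pins down the product $\prod_v c_{\sigma_v}^{-1}$. Because $\prod_v \omega_{\sigma_v}(-1)=\omega_\sigma(-1)=1$ holds automatically, showing that this product equals $1$ is strictly weaker than the local assertion $c_{\sigma_v}=\omega_{\sigma_v}(-1)$. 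The device you propose for then isolating the local constant---varying $\phi_v$ at one place, invoking multiplicity one and central scaling---cannot work: \eqref{conj pre} already holds for \emph{every} decomposable vector, the constant $c_{\sigma_v}$ is by definition independent of $\phi_v$, and central scaling multiplies both sides by $|\omega_{\sigma_v}(z_v)|^2=1$, which yields no constraint. What the argument actually requires, and what [Mo2] supplies, is a purely \emph{local} integral identity (a local analogue of the descent unfolding, equating $I_v^\natural$ with a local Rankin--Selberg type integral); the hypotheses of Theorem~\ref{refined ggp thm} (discrete series at archimedean places, inert over $2$, unramified at split places) are precisely what is needed to carry out that local computation. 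Your outline names the right flavor of difficulty but elides the place where all of the technical work occurs.
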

This is proved under certain assumptions by the second author in \cite[Corollary~1.1]{Mo2}.
Then we may apply this result to $\sigma$ under suitable assumptions.
%
\begin{theorem}
\label{wh ex thm}
Let $\pi$ be as in Theorem~\ref{refined ggp thm}, and 
let us denote $\Theta_{V, \mathbb{W}_n}(\pi, \psi, \chi_\Lambda^\Box)$ by $\sigma$.
Then \eqref{whittaker formula ref} holds for $\sigma$.
\end{theorem}
%
\begin{proof}
By \cite{Pau} (or \cite{Ich20}) with Remark~\ref{IP rem}, any
 archimedean component of $\sigma_\infty$ should be a discrete series representation.
Moreover, because of our assumptions, $\sigma_v$ should be unramified 
when $v$ is a split finite place. Moreover, $v$ is non-split in $E$ when 
$v$ divides $2$. 
Thus $\sigma, \psi, F$ and $E$ satisfy the conditions in  \cite[Corollary~1.1]{Mo2},
and the required formula holds.
\end{proof}
%
%
%
\subsection{Reduction to a local identity}
Combining Proposition~\ref{pullback whittaker prp},Theorem~\ref{Rallis inner} and \eqref{whittaker formula ref},
we may reduce the required identity to a certain local identity by an argument
similar to that in  \cite[Section~2]{FM2}.
Recall that we assume $B_{e, \psi, \Lambda} \not \equiv 0$ on $V_\pi$.
By Theorem~\ref{opp GGP}, we have 
$\alpha_v \not \equiv 0$ on $V_{\pi_v} \times V_{\pi_v}$ at any place $v$ of $F$
as we remarked in Section~\ref{intro ref ggp}, which also follows from Corollary~\ref{non-zero cor}.
Then by the multiplicity one theorem for Bessel models, as in \cite[(2.16)]{FM2},
there is a constant $C \in \mC^\times$ such that 
\[
B_{e, \psi, \Lambda}(\varphi_1) \cdot  \overline{B_{e, \psi, \Lambda}(\varphi_2)}
= C \cdot \prod_{v} \alpha_v^\natural(\varphi_{1,v}, \varphi_{2,v})
\]
for any $\varphi_i = \otimes_v \varphi_{i, v} \in V_\pi$.
In particular, if $B_{e, \psi, \Lambda}(\varphi) \ne 0$ for $\varphi =\otimes_v \varphi_v \in V_\pi$,
then $\alpha_v^\natural(\varphi_{v}, \varphi_{v}) \ne 0$ at any place $v$.
Fix $\varphi = \otimes_v \varphi_v \in V_\pi$ such that $B_{e, \psi, \Lambda}(\varphi) \ne 0$. Then we shall prove the formula 
\eqref{formula uni} for $\varphi$.

Let us set $x_0\coloneqq\left(e_{-1},\cdots, e_{-n+1},e \right)\in V\left(F \right)^n$
and $\mathcal X_{\lambda_e, v}\coloneqq G(F_v) \cdot x_0 \subset V \left(F_v \right)^n$.
Then $\mathcal X_{\lambda_e, v}$
is isomorphic to $R^\prime_e(F_v) \backslash G(F_v)$
as $G(F_v)$-homogeneous spaces, and $\mathcal X_{\lambda_e, v}$ is locally closed in $V\left(F_v\right)^n$.
Hence, the function 
\[
R_e^\prime(F_v) \backslash G(F_v) \ni g \mapsto \phi(g^{-1} \cdot x_0)
\]
is compactly supported for any $\phi \in C_c^\infty(V \left(F_v \right)^n)$.
Therefore, the following integral converges absolutely for any $\phi_v \in C_c^\infty(V \left(F_v \right)^n)$ $\colon$
\begin{equation}\label{local identity}
W_v(\phi_v, \varphi_v)\coloneqq \int_{R_e^\prime(F_v) \backslash G(F_v)}  
  \phi_v( g^{-1} \cdot \left( e_{-1}, \dots,e_{-n+1}, e \right))  \alpha_v^\circ(g_v; \varphi_v) \, dg_v.
\end{equation}
Here, we set
\[
\alpha_v^\circ(g_v, \varphi_v) \coloneqq \alpha_v^\natural(\pi_v(g_v)\varphi_v, \varphi_v) \slash \alpha_v^\natural(\varphi_v, \varphi_v).
\]
We note that from the proof of Proposition~\ref{pullback whittaker prp}, we have
\[
W_v(\phi_v, \varphi_v)= \int_{R_e^\prime(\mathcal{O}_v) \backslash G(\mathcal{O}_v)}  
  \phi_v( g^{-1} \cdot \left( e_{-1}, \dots,e_{-n+1}, e \right))  \alpha_v^\circ(g_v; \varphi_v) \, dg_v
 =1
\]
at almost all places $v$ of $F$ since $\mathrm{vol}(R_e^\prime(\mathcal{O}_v) \backslash G(\mathcal{O}_v))=1$ by our choice of the measure.
\begin{proposition}
Keep the above notation.
Then for any $\phi =\otimes_v \phi_v\in C_c^\infty(V(\mA)^n)$, we have 
\[
W_{\psi, \lambda}(\theta_{\psi, \chi_\Lambda^\Box}^\phi(\varphi))
= (C_G C_e^{-1}) \cdot B_{e, \psi, \Lambda}(\varphi) \prod_{v}W_v(\phi_v, \varphi_v)
\]
where $C_G$ and $C_e$ are Haar measure constant defined in Section~\ref{intro ref ggp}.
\end{proposition}
\begin{proof}
Recall that $B_{e, \psi, \Lambda}(\varphi) \ne 0$ and $\alpha_v^\natural(\varphi_{v}, \varphi_{v}) \ne 0$ for any $v$.
Then for $g = (g_v) \in G(\mA)$, we obtain 
\begin{align*}
B_{e, \psi, \Lambda}(\pi(g)\varphi) \cdot  \overline{B_{e, \psi, \Lambda}(\varphi)}
&= C \cdot \prod_{v} \alpha_v^\natural(\pi_v(g_v)\varphi_{v}, \varphi_{v})\\
&=B_{e, \psi, \Lambda}(\varphi) \cdot  \overline{B_{e, \psi, \Lambda}(\varphi)}
\cdot \prod_{v} \frac{\alpha_v^\natural(\pi_v(g_v)\varphi_{v}, \varphi_{v})}{\alpha_v^\natural(\varphi_{v}, \varphi_{v})}.
\end{align*}
Hence, we have 
\begin{equation}
\label{decomp Bessel pre}
B_{e, \psi, \Lambda}(\pi(g)\varphi)
=B_{e, \psi, \Lambda}(\varphi) 
\cdot \prod_{v} \frac{\alpha_v^\natural(\pi_v(g_v)\varphi_{v}, \varphi_{v})}{\alpha_v^\natural(\varphi_{v}, \varphi_{v})}.
\end{equation}
Now, putting the decomposition of Bessel periods \eqref{decomp Bessel pre} into \eqref{pullback whittaker prp identity},
we obtain the required identity
taking the decomposition of measure into account.
\end{proof}
From the proof of Proposition~\ref{pullback whittaker prp}, we may take 
$\varphi = \otimes \varphi_v \in V_\pi$ and $\phi =\otimes \phi_v \in C_c^\infty(V(\mA)^n)$ so that 
\[
W_{\psi, \lambda}(\theta_{\psi, \chi_\Lambda^\Box}^\phi(\varphi))  \ne 0
\quad
\text{and} 
\quad
\alpha_v^\natural(\varphi_v, \varphi_v) \ne 0.
\]
\color{black}

As in \cite[Section~3.1]{FM2}, let us fix local measures as follows.
Let $\omega$ and $\omega_G$
be non-zero gauge forms on $V^n$ and $G$ respectively.
Let $\omega_0$ be the gauge form on $\mathcal X_\lambda$
given by pulling back $\omega$ via the inclusion $\mathcal{X}_\lambda \hookrightarrow V^n$.
We choose a gauge form $\omega_\lambda$ on $R^\prime_e$
such that $\omega_G=\omega_0\,\omega_\lambda$.

At each place $v$ of $F$, we specify  $dg_v$  to be the 
local Tamagawa measure on $G(F_v)$ corresponding to $\omega_G$.
See Gross~\cite{Gro} and Rogawski~\cite[1.7]{Rog} for the details concerning the definition of local Tamagawa
measures.
Then we have
\[
dg=C_G\prod_v\, dg_v\quad\text{where
$C_G=\left(\prod_{j=1}^{2n}\,L\left(j, \chi_E^j \right)\right)^{-1}$}
\]
(see \cite[Section~1.5]{Xue19}).
We also specify $dr^\prime_v$ to be the local Tamagawa
measure on $R^\prime_{e, v}$ at each place $v$
corresponding to $\omega_\lambda$.
Also $dr^\prime_v=dt_v\, ds^\prime_v$
where $dt_v$ and $ds^\prime_v$ are the local 
Tamagawa measures on $D_{e}(F_v)$ and
$S^\prime_{e}(F_v)$, respectively.
Since $ds^\prime=\prod_v\,ds^\prime_v$ where $ds^\prime$
is the Tamagawa measure on $S_e^\prime\left(\mA\right)$,
we have
\begin{equation}
\label{u measure}
dr^\prime=C_e\prod_v\,
dr^\prime_v\quad
\text{where $C_e=\frac{1}{L\left(1,\chi_E\right)}$.}
\end{equation}
We take the quotient measure $dh_v$
on $R_{e, v}^\prime \backslash G(F_v)$ so that 
\[
dg_v=dr^\prime_v\, dh_v.
\]

Then by an argument similar to that in
 \cite[Section~3]{FM2}, we observe that  
 the following proposition is sufficient to 
 prove Theorem~\ref{refined ggp thm}.
 %
 %
 %
\begin{proposition}
\label{prp: local equality}
At any place $v$ of $F$, 
for a given non-zero $\varphi_v \in V_{\pi_v}^\infty$, there exists $\phi_v \in C_c^\infty(V(F_v)^n)$
such that $W_v\left(\phi_v,\varphi_v\right) \ne 0$ and the local equality
\begin{equation}\label{e: local equality}
\frac{Z_v^\circ\left(\varphi_v,\phi_v,\pi_v\right)\, I_v^\natural\left(
\theta_{\psi_v, \chi_{\Lambda_v}^\Box}\left(\phi_v\otimes\varphi_v\right)\right)}{
\left| W_v\left(\phi_v, \varphi_v\right)\right|^2}
=\frac{\alpha_v^\natural\left(\varphi_v,\varphi_v\right)}{
\langle\varphi_v,\varphi_v\rangle_v}
\end{equation}
holds with respect to the  local measures
specified as above.
\end{proposition}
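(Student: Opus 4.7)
The plan is to establish \eqref{e: local equality} as a local identity of $R_e(F_v)$-equivariant bilinear functionals. The strategy parallels the companion paper \cite{FM2}: carry out the pullback computation of Section~\ref{Pull-back computation 1} locally, combine it with a doubling-seesaw identity that expresses $Z_v^\circ$ in terms of the Weil-representation pairing $\mathcal{B}_{\omega_v}$, and then use multiplicity one \eqref{uniqueness} to match the two sides.

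First, I would prove a local analog of Proposition~\ref{pullback whittaker prp}. The same unfolding as in Section~\ref{Pull-back computation 1}, performed over $F_v$, expresses $I_v(\theta_{\psi_v, \chi_{\Lambda_v}^\Box}(\varphi_v \otimes \phi_v))$ as an integral over $R_e^\prime(F_v) \backslash G(F_v)$ in which the Schwartz function contributes $\omega_{\psi_v, \chi_{\Lambda_v}^\Box}(g,1)\varphi_v(x_0)$ with $x_0 = (e_{-1}, \ldots, e_{-n+1}, e)$, while the representation side contributes a local Bessel-type matrix coefficient of $\pi_v$. Second, exploiting the seesaw furnished by the embeddings $G \times \mathrm{U}(-V) \hookrightarrow \mathrm{U}(V^\Box)$ and $\mathbb{G}_n^- \hookrightarrow \mathbb{G}_n^- \times \mathbb{G}_n^-$, together with the intertwining property of $\tau_v$, I would identify the doubling zeta integral $Z_v(0, \phi_v, \phi_v, \Phi_{\tau_v(\varphi_v \otimes \varphi_v)}, \pi_v)$ with $Z_v^\flat(\phi_v, \phi_v; \varphi_v, \varphi_v)$; this is the local counterpart of the Rallis inner product formula~\eqref{e: rallis inner product}. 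Third, substituting these two reformulations into the left-hand side of \eqref{e: local equality}, interchanging the order of integration, and collapsing the inner Bessel integration to $\alpha_v^\natural(\phi_v, \phi_v)/\langle \phi_v, \phi_v\rangle_v$ by the regularized definition of $\alpha_v$, leaves an outer integral equal to $|W_v(\varphi_v; \phi_v)|^2$. To secure $W_v(\varphi_v; \phi_v) \ne 0$, one chooses $\varphi_v$ concentrated near $x_0$ in $V(F_v)^n$; the outer integral then localizes to a nonzero value because $\alpha_v^\natural(\phi_v, \phi_v) \ne 0$ by our running hypothesis $B_{e, \psi, \Lambda}(\phi) \ne 0$ combined with \eqref{e: multiplicity one}--\eqref{e: multiplicity one2}.

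The principal obstacle lies at archimedean places, where both $\alpha_v$ and $I_v$ are defined via Fourier transforms of tempered distributions rather than absolutely convergent integrals, and where $Z_v(s, \phi_v, \phi_v, \Phi_v, \pi_v)$ converges only conditionally on the boundary $\mathrm{Re}(s) = 0$. Interchanging the seesaw intertwiner $\tau_v$ with the stable/regularized integrations demands careful invocation of Liu's tempered-distribution estimates \cite[Proposition~3.14]{Liu2} and uniform control of the growth of matrix coefficients along maximal compact subgroups. At good non-archimedean places the identity reduces, via Liu's formula~\eqref{e: liu's theorem} together with the standard unramified doubling calculation, to the tautology $1 = 1$, and this fixes the overall normalization throughout.
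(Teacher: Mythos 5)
Your proposal follows the same route as the paper: it reduces Proposition~\ref{prp: local equality} to a local identity obtained by performing the pullback computation of Section~\ref{Pull-back computation 1} at the level of matrix coefficients, feeding in the doubling/Rallis mechanism (identifying $Z_v$ with $Z_v^\flat$, which the paper uses to define the inner product $\mathcal{B}_{\sigma_v}$ on $V_{\sigma_v}$), and handling the archimedean case through Liu's tempered-distribution framework. The only structural difference is that the paper packages the ``substitute, interchange, and collapse'' of your third step into the explicit intermediate Lemma~\ref{l: main lemma} --- which computes $\mathcal{W}$ as a double integral over $\left(R_e'(F_v)\backslash G(F_v)\right)^2$ against $\alpha_v$ --- and proves that lemma by building the invariant pairing $f_{\varphi,\varphi',\phi,\phi'}$ and tracking partial Fourier transforms; your description compresses this but captures the same content. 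One small caveat: the good-place computation via \eqref{e: liu's theorem} serves as a consistency check on the normalizing $L$-factors, not as something that ``fixes the normalization'' at other places, since the identity must be established independently at each $v$.
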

We note that  for the reduction to Proposition~\ref{prp: local equality}, we need to remark the following relations.
\begin{itemize}
\item $L(s, \Sigma, \mathrm{As}^+) = L(s, \pi, \mathrm{Ad})$.
\item $|S(\Psi(\pi))| = |S(\Psi(\sigma))| = 2^{k}$.
\end{itemize}
Since $\sigma$ is nearly equivalent to $\pi$ from the local computation by Kudla~\cite{Ku}, 
the base change lifts of $\sigma$ and $\pi$ coincide, namely $\Pi = \Sigma$.
Then the first relation follows from \eqref{Lfct bc as}
and we also have $|S(\Psi(\pi))| = |S(\Psi(\sigma))|$. Further, we note that 
$|S(\Psi(\sigma))|=2^k$ since the base change $\Sigma$ of $\sigma$ is $\sigma_1 \boxplus \cdots \boxplus \sigma_k$ and
 $\Psi(\sigma) = \sigma_1 \oplus \cdots \oplus \sigma_k$.
\\

Let us define a Hermitian inner product 
$\mathcal B_{\omega_v}$ on $\mathcal S\left(V(F_v)^n\right)$ by
%
\begin{equation}\label{e: weil Hermitian1}
\mathcal B_{\omega_v}\left(\phi_v,\phi_v^\prime\right)\coloneqq
\int_{V(F_v)^n}
\phi_v\left(x\right)\,\overline{\phi_v^\prime\left(x\right)}\,
dx
\quad\text{for $\phi_v,\phi_v^\prime\in \mathcal S\left(V(F_v)^n\right)$}
\end{equation}
where $dx$ denotes the local Tamagawa measure
corresponding to the gauge form $\omega_{v}$ on $V(F_v)^n$.
Then Liu~\cite[Lemma~3.19]{Liu2} proved that the integral
\begin{align}\label{e: weil Hermitian2}
&Z_v^\flat\left(\varphi_v, \varphi_v^\prime; \phi_v, \phi_v^\prime\right) 
= \int_{G(F_v)} 
\langle\pi\left(g\right)\varphi_v,\varphi_v^\prime\rangle\,
\mathcal B_{\omega_v}\left(\omega_{\psi_v, \chi_{\Lambda_v}^\Box}\left(g\right)\phi_v,\phi_v^\prime\right)
\, dg\\
=
 &\int_{G(F_v)} \int_{V(F_v)^n}
\langle\pi\left(g\right)\varphi_v,\varphi_v^\prime\rangle\,
\left(\omega_{\psi_v, \chi_{\Lambda_v}^\Box}\left(g\right)\phi_v \right)(x)\, \overline{\phi_v^\prime(x)}
\, dx \, dg
\notag
\end{align}
converges absolutely for $\varphi_v,\varphi_v^\prime\in V_{\pi_v}$ and 
$\phi_v,\phi_v^\prime\in\mathcal S\left(V\left(F_v\right)^n\right)$.
We note that our setting belongs to Case 2 
in the proof of \cite[Lemma~3.19]{Liu2}.

As in \cite[Section~3.2]{FM2} (see also Gan and Ichino~\cite[16.5]{GI1}), we may define
an $\mathbb{G}_n^-(F_v)$-invariant
Hermitian inner product  
$\mathcal B_{\sigma_v} \colon V_{\sigma_v}\times
V_{\sigma_v}\to\mathbb C$ by 
\begin{equation}
\label{inner def local}
\mathcal B_{\sigma_v}\left(\theta_{\psi_v, \chi_{\Lambda_v}^\Box}\left(\phi_v \otimes\varphi_v\right),
\theta_{\psi_v, \chi_{\Lambda_v}^\Box}\left(\phi_v^\prime\otimes\varphi_v^\prime\right)\right)\coloneqq Z_v^\flat\left(\varphi_v, \varphi_v^\prime; \phi_v, \phi_v^\prime\right).
\end{equation}
In the definition of $I_v(\theta_{\psi_v, \chi_{\Lambda_v}^\Box}(\phi_v \otimes \varphi_v))$, we may take $\mathcal{B}_{\sigma_v}$ as a Hermitian inner product.
Then  we denote by $\mathcal{W}_v(\theta_{\psi_v, \chi_{\Lambda_v}^\Box}(\phi_v \otimes \varphi_v), \theta_{\psi_v, \chi_{\Lambda_v}^\Box}(\phi_v \otimes \varphi_v))$  the resulting local Whittaker period. Namely, 
when $v$ is non-archimedean, we let 
\begin{align*}
&\mathcal{W}_v(\theta_{\psi_v, \chi_{\Lambda_v}^\Box}(\phi_v \otimes \varphi_v), \theta_{\psi_v, \chi_{\Lambda_v}^\Box}(\phi_v \otimes \varphi_v))
\\
&\coloneqq
 \int_{N_n(F_v)}^{st} \mathcal{B}_{\sigma_v}( \sigma_v(u)\theta_{\psi_v, \chi_{\Lambda_v}^\Box}(\phi_v \otimes \varphi_v), \theta_{\psi_v, \chi_{\Lambda_v}^\Box}(\phi_v \otimes \varphi_v) )
 \, \psi_{N_n, \lambda, v}(u)^{-1} \, du
 \end{align*}
and, when $v$ is archimedean,
 we let 
\begin{align*}
&\mathcal{W}_v(\theta_{\psi_v, \chi_{\Lambda_v}^\Box}(\phi_v \otimes \varphi_v), \theta_{\psi_v, \chi_{\Lambda_v}^\Box}(\phi_v \otimes \varphi_v))
\\
&\coloneqq \widehat{I_{\theta_{\psi_v, \chi_{\Lambda_v}^\Box}(\phi_v \otimes \varphi_v), \theta_{\psi_v, \chi_{\Lambda_v}^\Box}(\phi_v \otimes \varphi_v)}}(\psi_{N_n, \lambda, v})
 \end{align*}
where 
 \begin{align*}
&I_{\theta_{\psi_v, \chi_{\Lambda_v}^\Box}(\phi_v \otimes \varphi_v), \theta_{\psi_v, \chi_{\Lambda_v}^\Box}(\phi_v \otimes \varphi_v)}(u_0) 
\\
&\coloneqq 
\int_{N_{n, -\infty}} \mathcal{B}_{\sigma_v}( \sigma_v(u_0 u)\theta_{\psi_v, \chi_{\Lambda_v}^\Box}(\phi_v \otimes \varphi_v), \theta_{\psi_v, \chi_{\Lambda_v}^\Box}(\phi_v \otimes \varphi_v) )\, du
 \end{align*}
with $u_0 \in N_n(F_v)$.
 
By an argument
  similar to that in  \cite[Section~3.2]{FM2}, we 
  may further reduce Proposition~\ref{prp: local equality} to the following identity.
\begin{lemma}[cf. Lemma~2 in \cite{FM2}]\label{l: main lemma}
For any $\varphi,\varphi^\prime\in V_{\pi_v}^\infty$ and any $\phi,\phi^\prime\in C_c^\infty\left(V(F_v)^n\right)$,
we have
\begin{multline}\label{e: weil Hermitian13}
\mathcal{W}_v(\theta_{\psi_v, \chi_{\Lambda_v}^\Box}(\phi_v \otimes \varphi_v), \theta_{\psi_v, \chi_{\Lambda_v}^\Box}(\phi_v \otimes \varphi_v))
\\
=
C_{E_v \slash F_v} 
\int_{R_e^\prime(F_v)\backslash G(F_v)}
\int_{R_e^\prime (F_v)\backslash G(F_v)}
\alpha_v\left(\pi_v\left(h_v\right)\varphi_v,\pi_v\left(h_v^\prime\right)\varphi_v^\prime\right)
\\
\times
\left(\omega_{\psi_v, \chi_{\Lambda_v}^\Box}\left(h_v,1\right)\phi_v\right)\left(x_0\right)\,
\overline{\left(\omega_{\psi_v, \chi_{\Lambda_v}^\Box}\left(h_v^\prime,1\right)\phi_v^\prime\right)\left(x_0\right)}\,
dh_v\, dh^\prime_v
\end{multline}
\color{black}
where we set 
\[
C_{E_v \slash F_v} = \frac{L\left( 1, \chi_{E_v} \right)}{\prod_{j=1}^{2n}\,L\left(j, \chi_{E_v}^j \right)}.
\]
\end{lemma}
We note that by an argument similar to that for
 \cite[Corollary~2]{FM2}, we have the following corollary as a consequence of  this identity.
\begin{corollary}
\label{non-zero cor}
Let $\pi$ be an irreducible tempered representation of $G(F_v)$. 
Then for $\sigma = \theta(\pi, \psi_v, \chi_{\Lambda_v}^\Box)$, we have
\[
\mathrm{Hom}_{N_n(F_v)}(\sigma, \psi_{N_n, \lambda, v}) \ne 0 \Longleftrightarrow \alpha_v \not \equiv 0 \text{ on $V_\pi\times V_\pi$}.
\]
\end{corollary}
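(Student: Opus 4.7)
The plan is to deduce the equivalence directly from Lemma~\ref{l: main lemma}, which expresses the sesquilinear form $\mathcal{W}$ built from the Whittaker functional on $\sigma$ as a double integral involving the local Bessel form $\alpha_v$ on $\pi$. Both sides of the corollary translate into the vanishing of a canonically defined sesquilinear form, so the identity \eqref{e: weil hermitian13} immediately matches them up.

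First I would record the two reformulations. By the multiplicity one statement \eqref{uniqueness} together with \eqref{e: multiplicity one} and \eqref{e: multiplicity one2}, $\alpha_v \not\equiv 0$ on $\pi$ is equivalent to $\mathrm{Hom}_{R_{e,v}}(\pi_v, \chi_{e, \Lambda, v}) \ne 0$ and, equivalently, to the existence of $\phi \in V_{\pi_v}^\infty$ with $\alpha_v(\phi, \phi) \ne 0$. On the theta-lift side, $\mathrm{Hom}_{N_n(F_v)}(\sigma, \psi_{N_n, \lambda, v}) \ne 0$ is equivalent to $\mathcal{W}$ being not identically zero as a sesquilinear form on $V_\sigma^\infty \times V_\sigma^\infty$, since $\mathcal{W}$ is built from a Whittaker functional together with the Hermitian structure $\mathcal{B}_{\sigma_v}$, and vectors $\theta_{\psi_v, \chi_{\Lambda_v}^\Box}(\varphi \otimes \phi)$ exhaust $V_\sigma^\infty$ by Howe duality invoked in the proof of Theorem~\ref{refined ggp thm}.

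For the implication $\alpha_v \not\equiv 0 \Longrightarrow \sigma$ is $\psi_{N_n, \lambda, v}$-generic, I would fix $\phi \in V_{\pi_v}^\infty$ with $\alpha_v(\phi, \phi) \ne 0$, set $\phi = \phi'$ and choose $\varphi = \varphi' \in C_c^\infty(V(F_v)^n)$ whose restriction to $\mathcal{X}_{\lambda_e}(F_v)$ is a non-negative bump function supported in a small neighborhood of $x_0$. Under the identification $R_e^\prime(F_v)\backslash G(F_v) \xrightarrow{\sim} \mathcal{X}_{\lambda_e}(F_v)$ via $g \mapsto g^{-1} x_0$, the function $(h, h') \mapsto (\omega(h,1)\varphi)(x_0)\, \overline{(\omega(h',1)\varphi)(x_0)}$ becomes a non-negative bump concentrated near $(1,1)$. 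Continuity of $(h, h') \mapsto \alpha_v(\pi(h)\phi, \pi(h')\phi)$ at $(1,1)$ then yields that the right-hand side of \eqref{e: weil hermitian13} is a non-zero positive multiple of $\alpha_v(\phi, \phi)$, forcing $\mathcal{W} \not\equiv 0$. Conversely, if $\mathcal{W} \not\equiv 0$, the right-hand side of \eqref{e: weil hermitian13} must be non-zero for some $\phi, \phi', \varphi, \varphi'$, hence $\alpha_v(\pi(h)\phi, \pi(h')\phi') \ne 0$ for some $(h, h') \in G(F_v)^2$, yielding $\alpha_v \not\equiv 0$ on $\pi$.

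The main obstacle I anticipate is the localization step in the first direction: one must verify that $\varphi$ can be chosen so that the integrand on the right of \eqref{e: weil hermitian13} concentrates near the identity coset of $R_e^\prime(F_v) \backslash G(F_v)$, which reduces to the smoothness of the orbit map $G \twoheadrightarrow \mathcal{X}_{\lambda_e}$ together with a standard approximate-identity argument. This is the local geometric analogue of the approximation step in the proof of \cite[Corollary~2]{FM2}, and once it is carried out the corollary follows without further work.
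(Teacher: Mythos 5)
Your proposal is correct and follows the same route the paper has in mind: the paper itself dispatches this corollary by pointing to Lemma~\ref{l: main lemma} and saying the argument parallels that of \cite[Corollary~2]{FM2}, which is exactly what you carry out — reduce both sides to the (non-)vanishing of a matching sesquilinear form, and use an approximate-identity argument on the homogeneous space $R^\prime_e(F_v)\backslash G(F_v)\cong\mathcal{X}_{\lambda_e}(F_v)$ to localize the right-hand side of \eqref{e: weil hermitian13} near the identity coset. Two points worth tightening: the factor $(\omega_\psi(h,1)\varphi)(x_0)$ carries the twist $\chi_V(\det h)$, so it is not literally a non-negative bump, but since $\chi_V(\det h)\overline{\chi_V(\det h')}\to 1$ as $(h,h')\to(1,1)$ your continuity argument still closes once the support is small enough; and the equivalence $\mathrm{Hom}_{N_n(F_v)}(\sigma,\psi_{N_n,\lambda,v})\ne 0\Leftrightarrow\mathcal W\not\equiv 0$ for tempered $\sigma$ is the Whittaker-period analogue of \eqref{e: multiplicity one} and should be cited explicitly (it comes from Lapid--Mao's local Whittaker-period results), rather than just from the remark that $\mathcal W$ is built from a Whittaker functional, which gives only one implication.
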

\subsection{Proof of Lemma~\ref{l: main lemma}}
 In the non-archimedean case, Lemma~\ref{l: main lemma} 
 is proved
 by an argument similar to that in \cite{FM2}. Hence
 here we consider only the archimedean case. 
Indeed we prove Lemma~\ref{l: main lemma} when $\pi_v$ and $\sigma_v$ are tempered.
It is clear that this is 
sufficient for Theorem~\ref{refined ggp thm}.
We remark that in \cite{FM2}, we proved a similar identity for
discrete series representations.
Here we adapt the argument in \cite{FM2}
to tempered representations.

In our proof of Theorem~\ref{refined ggp thm}, we use the assumption only to apply Theorem~\ref{wh ex thm}.
Actually, our proof remains valid if we assume the explicit formula for the Whittaker periods.
For the sake of our use in \cite{FM3}, we record it as a corollary.
%
%
%
\begin{corollary}
\label{Bessel under ass}
Let $\pi$ be an irreducible cuspidal tempered automorphic representation of $G(\mA)$ where $G \in \mathcal{G}_n$.
\begin{enumerate}
\item
If $B_{e, \psi, \Lambda} \equiv 0$ on $\pi$, then 
the formula \eqref{formula uni} holds for $\pi$.
\item
If $B_{e, \psi, \Lambda} \not \equiv 0$ on $\pi$, 
then the formula \eqref{formula uni} holds for $\pi$,
under the assumption that 
Conjecture~\ref{main conj} holds for $\theta_{\psi, \chi_{\Lambda}^\Box}(\pi)$. 
\end{enumerate}
\end{corollary}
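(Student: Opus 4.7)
The plan is to extract this corollary as a straightforward bookkeeping consequence of how the proof of Theorem~\ref{refined ggp thm} was organized, by pinpointing exactly where the global hypotheses in that theorem were used.

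For part (1), when $B_{e,\psi,\Lambda}\equiv 0$ on $\pi$, I would simply re-run the argument of the lemma that opens Section~5 (the analogue of \cite[Corollary~1]{FM2}). That argument shows that both sides of \eqref{formula uni} vanish: the left side is zero by hypothesis, and the right side vanishes because the vanishing of the global Bessel period, combined with the local Gan--Gross--Prasad conjecture of Beuzart-Plessis \cite{BP1,BP2} and the endoscopic classification of \cite{KMSW,Mok}, forces $\alpha_v^\natural\equiv 0$ at some place $v$ in the local factorization. Crucially, none of the four assumptions listed in Theorem~\ref{refined ggp thm} (totally real base, discrete series at infinity, unramified conditions at split and dyadic places) enter into this vanishing argument. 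So part~(1) goes through verbatim.

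For part (2), I would trace through the proof of Theorem~\ref{refined ggp thm} under the additional non-vanishing hypothesis $B_{e,\psi,\Lambda}\not\equiv 0$. By Proposition~\ref{pullback whittaker prp}, the theta lift $\sigma:=\Theta_{V,\mathbb{W}_n}(\pi,\psi,\chi_\Lambda^\Box)$ is a nonzero irreducible cuspidal $\psi_{N_n,\lambda_e}$-generic automorphic representation of $\mathbb{G}_n^-(\mA)$. The reduction of \eqref{formula uni} to the local identity of Lemma~\ref{l: main lemma} proceeds by combining three global inputs: the pull-back formula from Proposition~\ref{pullback whittaker prp}, the Rallis inner product formula (Theorem~\ref{Rallis inner}), and the explicit Whittaker period formula \eqref{whittaker formula ref}, i.e.\ Conjecture~\ref{main conj}, applied to $\sigma$. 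The local identity itself (Lemma~\ref{l: main lemma}) is established place-by-place and makes no reference to any global condition on $\pi$.

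Inspecting the proof reveals that the four assumptions on $\pi$ are invoked in exactly one place, namely to cite Theorem~\ref{wh ex thm} which applies the second author's result \cite[Corollary~1.1]{Mo2} and thereby verifies Conjecture~\ref{main conj} for $\sigma$. Once we replace this step by the \emph{hypothesis} that Conjecture~\ref{main conj} holds for $\sigma=\theta_{\psi,\chi_\Lambda^\Box}(\pi)$, the remainder of the argument, including the reduction to the local identity and the proof of Lemma~\ref{l: main lemma} in both the non-archimedean and archimedean cases, is insensitive to the signature of $F$, the archimedean type of $\pi_v$, and the behavior at split or dyadic finite places. Thus part~(2) follows. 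There is no genuine obstacle here; the only subtlety is to verify carefully that no other step of Section~5 silently uses the assumptions of Theorem~\ref{refined ggp thm}, in particular that the convergence and stability results of Liu \cite{Liu2} used in defining $\alpha_v$, $I_v$, and $Z_v^\flat$, and the Rallis inner product formula from \cite[Lemma~10.1]{Yam}, all hold for arbitrary tempered $\pi$ in our setting; a quick inventory confirms this.
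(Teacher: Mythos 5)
Your proposal is correct and follows exactly the route the paper indicates: the authors note just before the corollary that the assumptions in Theorem~\ref{refined ggp thm} were used solely to invoke Theorem~\ref{wh ex thm} (i.e.\ to verify Conjecture~\ref{main conj} for $\sigma$), so assuming that conjecture directly makes the rest of the argument, including the lemma opening Section~5 for the vanishing case, go through unchanged. Your more detailed inventory of where each input is used is a faithful expansion of the paper's one-line justification.
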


 \color{black}
 
For simplicity, we abbreviate the subscript $v$ from the notation and write, for example,  $X(F_v)$ as $X$ for any algebraic group $X$ over $F$.
Also, we simply write $\theta(\phi \otimes \varphi)$ for  $\theta_{\psi, \chi_{\Lambda}^\Box}(\phi \otimes \varphi)$.
Recall that for a $\mathbb{G}_n^-$-invariant Hermitian inner product $(\,,\,)$ on $V_\sigma$, we have
\[
 I(\varphi_0, \varphi_0^\prime)  = \widehat{ I_{\varphi_0, \varphi_0^\prime, \infty}}( \psi_{N_n, \lambda}),
\]
where for $\varphi_0, \varphi_0^\prime \in V_\sigma^\infty$ and $u^\prime \in N_n$, we set
 \[
 I_{\varphi_0, \varphi_0^\prime, \infty}(u^\prime) = \int_{N_{n,  -\infty}}( \sigma(u^\prime u)\varphi_0, \varphi_0^\prime ) \, du,
 \]
which converges absolutely by \cite[Proposition~3.10]{Liu2}. 
Let us define 
\[
U_{\mathbb{G}_n^-, -\infty} \coloneqq U_{\mathbb{G}_n^-} \cap N_{n, -\infty}\quad\text{and}\quad U_{n, -\infty} \coloneqq
 \{u \in U_n \,\colon \, \widehat{u} \in N_{n, -\infty}\}.
\]
Then from the definition, we may write 
\[
 I_{\varphi_0, \varphi_0^\prime, \infty}(u^\prime) =\int_{U_{\mathbb{G}_n^-, -\infty}}\int_{U_{n, -\infty}} ( \sigma(u^\prime \widehat{u_1} u_2)\varphi_0, \varphi_0^\prime )\, du_1 \, du_2.
\]
We note that $ I_{\varphi_0, \varphi_0^\prime, \infty}(\widehat{u_1} u_2) = I_{\varphi_0, \varphi_0^\prime, \infty}(u_2 \widehat{u_1})$ for $u_1 \in U_n$ and $u_2 \in U_{\mathbb{G}_n^-}$ 
since $\widehat{u_1}^{-1} u_2^{-1} \widehat{u_1} u_2 \in U_{\mathbb{G}_n^-, -\infty}$.
Then it is a tempered distribution on $\left( \widehat{U_{n}} \slash \widehat{U_{n, -\infty}} \right)  \times \left( U_{\mathbb{G}_n^-} \slash U_{\mathbb{G}_n^-, -\infty} \right)$ by \cite[Corollary~3.13]{Liu2}.
We define partial Fourier transforms $\widehat{I_{\varphi_0, \varphi_0^\prime, \infty}}^i$ of $ I_{\varphi_0, \varphi_0^\prime, \infty}$ for $i=1,2$ by 
\begin{align*}
\langle  I_{\varphi_0, \varphi_0^\prime, \infty}, \widehat{f_1} \otimes f_2 \rangle& = \langle  \widehat{I_{\varphi_0,\varphi_0^\prime, \infty}}^1, f_1 \otimes f_2 \rangle,
\\
\langle  I_{\varphi_0, \varphi_0^\prime, \infty}, f_1 \otimes \widehat{f_2} \rangle &= \langle  \widehat{I_{\varphi_0, \varphi_0^\prime, \infty}}^2, f_1 \otimes f_2 \rangle ,
\end{align*}
where $f_1 \in \mathcal{S} \left( \widehat{U_{n}} \slash \widehat{U_{n, -\infty}} \right)$ and 
$f_2 \in \mathcal{S} \left( U_{\mathbb{G}_n^-} \slash U_{\mathbb{G}_n^-, -\infty} \right)$.
Then we have 
\begin{equation}
\label{hat 1 2}
\widehat{ \widehat{I_{\varphi_0, \varphi_0^\prime, \infty}}^2}^1 = \widehat{ \widehat{I_{\varphi_0, \varphi_0^\prime, \infty}}^1}^2 
= \widehat{I_{\varphi_0, \varphi_0^\prime, \infty}}.
\end{equation}
Further, we note that $\widehat{I_{\varphi_0, \varphi_0^\prime, \infty}}^1\left(\,\cdot \otimes f_2\right)$ and 
$\widehat{I_{\varphi_0, \varphi_0^\prime, \infty}}^2\left(f_1 \otimes \cdot\,\right)$
are smooth on the regular locus of the Pontryagin dual of  $\widehat{U_{n}} \slash \widehat{U_{n, -\infty}}$ and 
$U_{\mathbb{G}_n^-} \slash U_{\mathbb{G}_n^-, -\infty}$,
respectively, in the sense described
in  \ref{local integral sec}.

Now, let us take $\mathcal B_{\sigma}$ (see \eqref{inner def local} and \eqref{e: weil Hermitian2} for the definition) 
as a $\mathbb{G}_n^-$-invariant Hermitian inner product $(\,,\,)$ on $V_\sigma$.
%
%
%
In this case, we write $I_{\varphi_0, \varphi_0^\prime, \infty}$ with 
$\varphi_0=\theta(\phi \otimes \varphi), \varphi_0^\prime=\theta(\phi^\prime \otimes \varphi^\prime)$ 
by $f_{\phi, \phi^\prime, \varphi, \varphi^\prime, \infty}$. Hence, we obtain
\[
\mathcal{W}(\theta(\phi \otimes \varphi), \theta(\phi^\prime \otimes \varphi^\prime))
= \widehat{f_{\phi, \phi^\prime, \varphi, \varphi^\prime, \infty}}\,\left( \psi_{N_n, \lambda}|_{\widehat{U_{n}}} \otimes \psi_{N_n, \lambda}|_{U_{\mathbb{G}_n^-} }\right).
\]
As in \cite[Lemma~4]{FM2}, for any $f_1 \in \mathcal{S} \left( \widehat{U_{n}} \slash \widehat{U_{n, -\infty}} \right)$, 
we may prove the following identity
\begin{multline}
\label{e: hat 2}
 \widehat{f_{\phi, \phi^\prime, \varphi, \varphi^\prime, \infty}}^2\left(f_1 \otimes \psi_{N_n, \lambda}|_{U_{\mathbb{G}_n^-} } \right) 
 = C_{E \slash F} \int_{U_{n} \slash U_{n, -\infty}} \int_{U_n, -\infty} \int_{G}  \int_{R_e^\prime \backslash G} 
  \\
 f_1(v)\,
 \left( \omega_{\psi, \chi_{\Lambda}^\Box}(hg, \hat{v} \hat{u}) \phi \right)(x_0) \,
 \overline{\phi^\prime(h^{-1} \cdot x_0)}
\, \langle \pi(g)\varphi, \varphi^\prime \rangle \,dh \,dg \,du \, dv ,
\end{multline}
by an argument  similar to that for \cite[Corollary~3.23]{Liu2}.
For $u_1 \in U_n$, we define
\begin{multline}
\label{J-integral}
J_{\phi, \phi^\prime, \varphi, \varphi^\prime}(\widehat{u_1}) \coloneqq \int_{U_{n, -\infty}} \int_{G} \int_{R_e^\prime \backslash G} 
 \left( \omega_{\psi, \chi_{\Lambda}^\Box}(hg, \widehat{u u_1}) \phi \right)(x_0)  
 \, \overline{\phi^\prime(h^{-1} \cdot x_0)}
 \\
 \times
 \langle \pi(g)\varphi, \varphi^\prime \rangle \,dh \,dg \, du.
\end{multline}
Here the absolute convergence of the right-hand side of
\eqref{J-integral}
is shown in the following way.
Since $\phi, \phi^\prime \in C_c^\infty(V^n)$, we may take $f, f^\prime \in \mathcal{S}(V^n)$ such that $|\phi| \leq f$ and $|\phi^\prime| \leq f^\prime$. 
Then it is sufficient to show the convergence of the integral
\[
 \int_{U_{n, -\infty}} \int_{G} \int_{R_e^\prime \backslash G} 
 \left( \omega_{\psi, \chi_{\Lambda}^\Box}(hg, \widehat{u u_1}) f \right)(x_0)  
 \, \overline{f^\prime(h^{-1} \cdot x_0)}
 |\langle \pi(g)\varphi, \varphi^\prime \rangle| \,dh \,dg \, du,
\]
which follows from \cite[Corollary~3.13]{Liu2}.

Now, by telescoping the $G$-integration, the 
right-hand side of  \eqref{J-integral} is equal to
\begin{multline*}
 \int_{U_{n, -\infty}}  \int_{R_e^\prime \backslash G} \int_{R_e^\prime \backslash G}  \int_{R_e^\prime}
 \left( \omega_{\psi, \chi_{\Lambda}^\Box}(g,  \widehat{uu_1}) \phi \right)(x_0) 
 \,
 \overline{\phi^\prime(h^{-1} \cdot x_0)}
 \\
 \times
 \langle \pi(r^\prime g)\varphi, \pi(h)\varphi^\prime \rangle \, dr^\prime \,dg \,dh \, du.
\end{multline*}
Then by \eqref{hat 1 2} and \eqref{e: hat 2}, the above computation gives
\[
\mathcal{W}(\theta(\phi \otimes \varphi), \theta(\phi^\prime \otimes \varphi^\prime)) 
= C_{E \slash F}  \cdot \widehat{J_{\phi, \phi^\prime, \varphi, \varphi^\prime}}^1(\psi_{N_n, \lambda}).
\]
From the definition of $\widehat{J_{\phi, \phi^\prime, \varphi, \varphi^\prime}}^1$, for any $f \in \mathcal{S} (U_n  \slash U_{n, -\infty})$, we have
\begin{align*}
&(\widehat{J_{\phi, \phi^\prime, \varphi, \varphi^\prime}}^1, f) = (J_{\phi, \phi^\prime, \varphi, \varphi^\prime}, \widehat{f}) = \int_{U_n  \slash U_{n, -\infty}} 
J_{\phi, \phi^\prime, \varphi, \varphi^\prime}(\widehat{n}) \widehat{f}(n) \, dn
\\
= &\int_{U_n  \slash U_{n, -\infty}}  
\int_{U_{n, -\infty}}  \int_{R_e^\prime \backslash G} \int_{R_e^\prime \backslash G}  \int_{R_e^\prime}
 \left( \omega_{\psi, \chi_{\Lambda}^\Box}(g,  \widehat{nu}) \phi \right)(x_0) 
 \\
&\qquad\qquad \times \overline{\phi^\prime(h^{-1} \cdot x_0)}
 \langle \pi(r^\prime g)\varphi, \pi(h)\varphi^\prime \rangle \widehat{f}(n) \, dr^\prime \,dg \,dh \, du \, dn
\end{align*}
which converges absolutely from the definition. 
Since this integral converges absolutely and we may change the order of integration,  in order to prove Lemma~\ref{l: main lemma}, it suffices to show that 
for any $f \in \mathcal{S}(S \slash S_{-\infty})$, 
\[
 \int_{U_n \slash U_{n, -\infty}}  
\int_{U_{n, -\infty}}  \int_{R_e^\prime}
 \left( \omega_{\psi, \chi_{\Lambda}^\Box}(g,  \widehat{nu}) \phi \right)(x_0) 
 \langle \pi(r^\prime g)\varphi, \pi(h)\varphi^\prime \rangle \widehat{f}(n) \, dr^\prime \, du \, dn
\]
is equal to 
\[
\phi(g^{-1} \cdot x_0) \cdot \langle \alpha_{\pi(g)\varphi, \pi(h)\varphi^\prime}, \widehat{f} \,\rangle.
\]
In the  integral above, we regard $f \in \mathcal{S}(S \slash S_{-\infty})$ as an element of 
$\mathcal{S} \left(U_n \slash U_{n, -\infty} \right)$ by 
\[
U_n \ni u u_0^\prime \mapsto f(\check{u}r_{n-1}(a_{n-1}) \dots r_{1}(a_1))
\]
for $u \in U_{n-1}$ and $u_0^\prime \in U_0^\prime$ where we write $u_0^\prime$ 
as  in \eqref{def u_0^prime} with $a_{n-1}=0$
and $r_i(a_i)$ are defined as in the proof of Proposition~\ref{pullback whittaker prp}.
See Section~\ref{local integral sec} for the definition of  $\alpha_{\varphi, \varphi^\prime}$.
Indeed, by a computation  similar to the proof of Proposition~\ref{pullback whittaker prp}, i.e. by
a computation analogous to that in \cite{Fu} (see also \cite[Section~3.4.3]{FM2}),
we may prove that this is equal to
\begin{align*}
 &\int_{S \slash S_{-\infty}}  
\int_{S_{-\infty}}  \int_{D_e}
 \left( \omega_{\psi, \chi_{\Lambda}^\Box}(g,  1) \phi \right)(x_0) 
 \langle \pi(r^\prime s s^\prime g)\varphi, \pi(h)\varphi^\prime \rangle \widehat{f}(s) \, dr^\prime \, ds^\prime \, ds
 \\
 =
&\,
\phi(g^{-1} \cdot x_0) \cdot  \langle \alpha_{\pi(g)\varphi, \pi(h)\varphi^\prime}, \widehat{f} \rangle
\end{align*}
and this completes our proof of Lemma~\ref{l: main lemma} and Theorem~\ref{refined ggp thm}.
Here, we note that this computation is valid because our integral converges since $f, \hat{f} \in \mathcal{S}(S \slash S_{-\infty})$.
\qed
\begin{Remark}
For the convenience of the reader, we recall the relation between $\psi_{N_n, \lambda}$ and $\chi_{e, \Lambda}$ 
explored in the proof of Proposition~\ref{pullback whittaker prp}, which is also crucial
in the computation above $\colon$
\[
\psi_{N_n, \lambda}(\widehat{u} \widehat{u_0^\prime}) = \chi_{e, \Lambda}\left(\check{u} r_1(a_1) \cdots r_{n-1}(a_{n-1}) \right)
\]
where $u \in U_{n-1}$ and  $u_0^\prime \in U_0^\prime$.
\end{Remark}
%
%
%
%
%
%
%
%
%
%
%
%
%
%
%
%
%
%
%
%
%
%
%
%
%
%
%
%
%
%
%
%
%
%
%
%
%
%
%
%
%
%
%
%
%
%
%
%
%
%
%
%
%
%
%
\section{Refined Gan--Gross--Prasad conjecture of Bessel periods for $(\mathrm{GL}_{2n}, \mathrm{GL}_1)$}
\label{s:RGGP GL}
In this section we shall prove a split analogue of Theorem~\ref{refined ggp thm},  namely
the
 Ichino--Ikeda-type formula for Bessel periods on
  $(\mathrm{GL}_{2n}, \mathrm{GL}_1)$, in a similar manner.
In order for that we shall prove the
Rallis inner product formula for the theta lift from $\mathrm{GL}_k$ to $\mathrm{GL}_k$.
Further we shall compute the
pull-back of Whittaker periods for the theta lift from $\mathrm{GL}_{2n}$ to $\mathrm{GL}_{2n}$.
%
%
%
%
\subsection{Weil representation and theta lift for $(\mathrm{GL}_\ell, \mathrm{GL}_m)$}
Let us quickly recall some generalities concerning 
the theta lift for the dual pair $\left(\mathrm{GL}_\ell, \mathrm{GL}_m\right)$.

We denote $\mathcal{S}(\mathrm{Mat}_{\ell, m}(\mA))$,
the space of Schwartz functions on $\mathrm{Mat}_{\ell, m}(\mA)$,
by $\mathcal{S}_{\ell, m}(\mA)$.
We also write $\mathcal{S}(\mathrm{Mat}_{\ell, m}(F_v))$ by
$\mathcal{S}_{\ell, m}(F_v)$ for any place $v$ of $F$.
Then the Weil representation $\omega_{\ell,m}$
of $\mathrm{GL}_\ell(\mA) \times \mathrm{GL}_m(\mA)$ on $\mathcal{S}_{\ell, m}(\mA)$
is given by
\[
(\omega_{\ell, m}(g, h)\phi)(x) =\alpha(g)^{-\frac{m}{2}} \alpha(h)^{\frac{\ell}{2}}   \phi(g^{-1}xh)
\]
for $\left(g,h\right)\in \mathrm{GL}_\ell(\mA) \times \mathrm{GL}_m(\mA)$ and $\phi \in \mathcal{S}_{\ell, m}(\mA)$,
where $\alpha\left(g\right)=\left|\det g\right|$.

Let us recall another realization of the Weil representation (see \cite[Section~3]{Min}).
Let $j$ be an integer such that $0<j\le \min \{\ell, m\}$
and fix such $j$.
For each $j$, we have an isomorphism 
$\mathcal{S}_{\ell, m}(\mA)\ni \phi
\mapsto\mathcal{F}_{j, \psi}(\phi)\in \mathcal{S}_{\ell, m}(\mA)$
given by the partial Fourier transform
\[
\mathcal{F}_{j, \psi}(\phi) \begin{pmatrix} a\\ b \end{pmatrix}
= \int_{\mathrm{Mat}_{j, m}(\mA)} \phi\begin{pmatrix} a^\ast \\ b \end{pmatrix}
\psi \left(\mathrm{tr}({}^{t} a a^\ast) \right) \, da^\ast
\]
where $a\in\mathrm{Mat}_{j,m}\left(\mA\right)$ and
$b\in\mathrm{Mat}_{\ell-j,m}\left(\mA\right)$. 
Then we note that $\mathcal{F}_{j, \psi}^{-1}=\mathcal{F}_{j, \psi^{-1}}$.
Often we simply write $\mathcal{F}_{j, \psi} = \mathcal{F}_{j}$.

Now we have another action $\omega_{\ell, m, j}^\prime$ of $\mathrm{GL}_\ell(\mA) \times \mathrm{GL}_m(\mA)$ 
on $\mathcal{S}_{\ell, m}(\mA)$ defined by
\begin{multline}\label{another realization}
(\omega_{\ell, m, j}^\prime(g, h) \mathcal{F}_j(\phi)) \begin{pmatrix} x\\ y \end{pmatrix}
=\alpha(g)^{-\frac{m}{2}}\alpha(h)^{\frac{\ell}{2}}  \int_{\mathrm{Mat}_{j,m}(\mA)} \phi \left(g^{-1} \begin{pmatrix} a^\ast \\ y \end{pmatrix} h\right)
\\
\times
\psi (\mathrm{tr}({}^t x a^\ast)) \, da^\ast
\end{multline}
where $x \in \mathrm{Mat}_{j, m}(\mA)$ and $y \in \mathrm{Mat}_{\ell-j, m}(\mA)$.
Then the isomorphism $\mathcal{F}_j$ gives an isomorphism 
from $\omega_{\ell, m}$ to $\omega_{\ell, m, j}^\prime$,
i.e.
\[
\mathcal{F}_j(\left(\omega_{\ell,m}\left(g,h\right)\phi\right))
=\omega^\prime_{\ell,m,j}
\left(g,h\right)\mathcal{F}_j(\phi)
\,\,\,
\text{for}\,\,\,
\left(g,h\right)\in \mathrm{GL}_\ell(\mA) \times \mathrm{GL}_m(\mA).
\]

Let $\xi$ be a unitary character of $\mA^\times \slash F^\times$.
For a positive integer $k$, let $Z_k$ denote the center of $\mathrm{GL}_k$
and let $z_k\left(a\right)=a\cdot 1_k\in Z_k\left(\mA\right)$ for $a \in \mA^\times$.
For $s \in \mC$ and $\phi \in \mathcal{S}_{\ell, m}(\mA)$, we define  a
theta function
$\theta(s, \xi, \phi)$
by
\begin{multline*}
\theta(s, \xi, \phi) \coloneqq \int_{F^\times \backslash \mA^\times} \xi(a)\, \alpha(z_m(a))^{s+\frac{\ell}{2}}
 \sum_{x \in \mathrm{Mat}_{\ell, m}\left(F\right),\, x \ne 0} \phi(ax) \, da
 \\
 =
 \int_{F^\times \backslash \mA^\times} \xi(a)\, \alpha(z_m(a))^{s}
 \sum_{x \in \mathrm{Mat}_{\ell, m}\left(F\right),\, x \ne 0} (\omega_{\ell, m}(1, z_m(a))\phi)(x) \, da,
\end{multline*}
which converges absolutely for $\mathrm{Re}(s) \gg 0$ by \cite[Lemma~11.5, 11.6]{GJ}. 
From the definition, we have
\[
\theta(s, \xi, \omega_{\ell, m}(1_\ell, z_m(a) )\phi) 
=  \xi(a)^{-1} \alpha(z_m(a))^{-s}\theta(s, \xi, \phi)
\]
for $a \in \mA^\times$ when $\mathrm{Re}(s) \gg0$.
%

Since
\[
\omega^\prime_{\ell, m, j}(1, z_m(a))\mathcal{F}_j(\phi)
= \mathcal{F}_j(\omega_{\ell, m}(1, z_m(a))\phi),
\]
the following identity holds  by the Poisson summation formula :
\begin{equation}
\label{PSF}
\sum_{x \in \mathrm{Mat}_{\ell, m}\left(F\right)} 
(\omega^\prime_{\ell, m, j}(1, z_m(a)) \mathcal{F}_j(\phi))(x)
=
 \sum_{x \in \mathrm{Mat}_{\ell, m}\left(F\right)} 
\omega_{\ell, m}(1, z_m(a))\phi(x)
\end{equation}
for $a \in \mA^\times$.
In particular, when $\phi$ satisfies $\phi(0) = \mathcal{F}_j(\phi)(0)=0$, we have 
\begin{multline*}
\theta(s, \xi, \phi) 
\\ =
 \int_{F^\times \backslash \mA^\times} \xi(a)\, \alpha(z_m(a))^{s}
 \sum_{x \in \mathrm{Mat}_{\ell, m}\left(F\right),\, x \ne 0} (\omega^\prime_{\ell, m, j}(1, z_m(a))\mathcal{F}_j(\phi))(x) \, da.
\end{multline*}

From now on, we only consider the case when $\ell=m$.
Let $\left(\pi,V_\pi\right)$ be an irreducible cuspidal automorphic 
representation of $\mathrm{GL}_\ell\left(\mA\right)$
with $\mu_\pi$ its central character.
Then for $\phi\in\mathcal S_{\ell,\ell}\left(\mA\right)$ and $s\in\mathbb C$,
we define the 
theta lift $\theta\left(\varphi,\phi,s\right)$ of
$\varphi\in V_\pi$ to $\mathrm{GL}_\ell\left(\mA\right)$
by
\[
\theta(\varphi, \phi, s)(g) \coloneqq \int_{Z_\ell(\mA) \mathrm{GL}_\ell(F) \backslash \mathrm{GL}_\ell(\mA)}
\varphi(h) \alpha(h)^{s} \theta(s, \mu_\pi, \omega_{\ell, \ell}(g, h)\phi) \, dh.
\]
When it converges absolutely, we have
\begin{multline}\label{ex theta}
\theta(\varphi, \phi, s)(g) = \int_{\mathrm{GL}_\ell(F) \backslash \mathrm{GL}_\ell(\mA)}
\varphi(h) \alpha(h)^{s}
\\
\times \sum_{x \in \mathrm{Mat}_{\ell, \ell}\left(F\right), \,x \ne 0}\omega_{\ell, \ell}(g,h)\phi(x)\, dh.
\end{multline}
Actually  the integral converges absolutely for $\mathrm{Re}(s) \gg 0$ and it has a holomorphic continuation to $\mC$ by \cite[Lemma~3]{TW2}.
Moreover, as remarked in \cite[p.707]{TW2},  
$\theta(\varphi, \phi, s)$ is a cusp form (possibly, zero) on $\mathrm{GL}_{\ell}\left(\mA\right)$
 for  $\mathrm{Re}(s) \gg 0$.
We note that
\[
\theta(\varphi, \phi, s)(z_\ell(a)g)
 =\alpha(z_m(a))^s \mu_\pi(a) \theta(\varphi, \phi, s)(g).
\]
We note that for a given $g, h \in \mathrm{GL}_{\ell}(\mA)$, if we have 
\[
(\omega_{\ell, \ell}(g, h)\phi)(0) =0
\qquad
\text{and}
\qquad
(\omega_{\ell, \ell, j}^\prime(g, h) \mathcal{F}_j(\phi))(0) =0,
\]
then we have the following another expression of the theta lift
\begin{multline}\label{another ex theta}
\theta(\varphi, \phi, s)(g) = \int_{\mathrm{GL}_\ell(F) \backslash \mathrm{GL}_\ell(\mA)}
\varphi(h) \alpha(h)^{s}
\\
\times \sum_{x \in \mathrm{Mat}_{\ell, \ell}\left(F\right), \,x \ne 0}\omega_{\ell, \ell, j}^\prime(g,h) \mathcal{F}_j(\phi)(x)\, dh.
\end{multline}

For an irreducible cuspidal automorphic representation $(\pi, V_\pi)$ of $\mathrm{GL}_\ell(\mA)$,
we define  its theta lift $\theta\left(\pi\right)$
to $\mathrm{GL}_\ell(\mA)$ by 
\begin{equation}\label{one theta}
\theta(\pi) \coloneqq \langle \theta(\varphi, \phi, 0) \colon \varphi \in V_\pi, \phi \in \mathcal{S}_{\ell, \ell}(\mA) \rangle
\end{equation}
where the right-hand side means the
automorphic representation of $\mathrm{GL}_\ell\left(\mA\right)$
generated by
$\left\{\theta(\varphi, \phi, 0) \colon \varphi \in V_\pi, \phi \in \mathcal{S}_{\ell, \ell}(\mA)\right\}$.

We recall  the following fact \cite[Theorem~1]{TW2}
concerning
 the non-vanishing of the 
 global theta lift from $\mathrm{GL}_\ell$
 to $\mathrm{GL}_\ell$.
 %
\begin{theorem}
\label{TW thm1}
Let $\pi$ be an irreducible cuspidal automorphic representation of $\mathrm{GL}_\ell(\mA)$.
Then its theta lift $\theta(\pi)$ to $\mathrm{GL}_\ell(\mA)$ is non-zero if and only if 
$L \left(\frac{1}{2}, \pi  \right) \ne 0$. 
Moreover when $\theta\left(\pi\right)$ is non-zero,
we have
$\theta(\pi)=\pi$. 
\end{theorem}
%
Note that, in particular,
 $\theta(\varphi, \phi, 0)$ is cuspidal (possibly zero) 
 for any $\varphi \in V_\pi$ and $\phi \in \mathcal{S}_{\ell, \ell}(\mA)$ 
since $\theta\left(\pi\right)$ is zero or $\pi$
by Theorem~\ref{TW thm1}.
Hereafter we shall simply write $ \theta(\varphi, \phi)$ for $\theta(\varphi, \phi, 0)$.

In the following subsections, 
we use the realization of theta lifts as \eqref{ex theta}
for our proof of the Rallis inner product formula for the
dual pair $\left(\mathrm{GL}_k,\mathrm{GL}_k\right)$
in \ref{rif for gl}
and use the realization \eqref{another ex theta}
for the pull-back computation of the Whittaker period
in \ref{pull-back gl}.
%
%
%
%
\subsection{Rallis inner product formula for the dual pair $(\mathrm{GL}_k, \mathrm{GL}_k)$}\label{rif for gl}
For an irreducible unitary cuspidal automorphic representation $(\pi, V_\pi)$ of $\mathrm{GL}_k(\mA)$,
let us prove the Rallis inner product formula for the theta lift $\theta(\pi)$ of $\pi$ to $\mathrm{GL}_{k}(\mA)$.

For cusp forms $\varphi_1$ and $\varphi_2$ on $\mathrm{GL}_k(\mA)$ with the same unitary central character,
we define the Petersson inner product   by
\[
\langle \varphi_1, \varphi_2 \rangle = \int_{Z_k(\mA) \mathrm{GL}_k(F) \backslash  \mathrm{GL}_k(\mA)}
\varphi_1(g) \,\overline{\varphi_2(g) } \,dg.
\]
Here we normalize the measure $dg$ so that 
$\mathrm{Vol}\left(Z(\mA) \mathrm{GL}_k(F) \backslash  \mathrm{GL}_k(\mA), dg \right)=1$.
For $\phi_1, \phi_2 \in \mathcal{S}_{k,k}(\mA)$, let us  define an inner product on $\mathcal{S}_{k,k}(\mA)$ by 
\begin{align}
\label{def IP SS}
(\phi_1, \phi_2) \coloneqq &
\int_{\mathrm{Mat}_{k \times k}(\mA)} \phi_1(x) \overline{\phi_2(x)} \, dx \\
=&\int_{\mathrm{GL}_k(\mA)} (\phi_1 \cdot \alpha^{k \slash 2})(g) \overline{(\phi_2 \cdot \alpha^{k \slash 2})(g)} \, dg
\notag
\end{align}
where $dx$ denotes the measure on $\mathrm{Mat}_{k \times k}(\mA)$ given by the relation $\frac{dx}{|\det x|^k_\mA} = dg$.
Then we note that 
\[
(\omega_{k,k}(g, h)\phi_1, \omega_{k,k}(g, h)\phi_2) =(\phi_1, \phi_2).
\]
We recall that  a local measure  $dg_v$ on $\mathrm{GL}_k(F_v)$ 
is chosen 
so that $\mathrm{Vol}(K_v, dg_v)=1$ at almost all places where $K_v$ is  a maximal compact subgroup of $\mathrm{GL}_k(F_v)$. 
Then we have the decomposition $dg= C_1 \, \prod_v dg_v$ with the Haar measure constant $C_1 > 0$.
For simplicity, we set 
\begin{equation}
\label{holo section}
\Phi_{\phi}^s(g) =(\omega_{k,k}(1,g)(\phi \cdot \alpha^s), \phi \cdot \alpha^s)
\end{equation}
for $\phi \in \mathcal{S}_{k,k}(\mA)$.
For $\phi_v \in \mathcal{S}_{k,k}(F_v)$, we define an inner product $(\phi_v,\phi_v)_v$
and $\Phi_{\phi_v}^s(g)$ in a way similar to  \eqref{def IP SS} and \eqref{holo section}, respectively.
\begin{proposition}
\label{RIP GL}
Let $\phi = \otimes \,\phi_v \in \mathcal{S}_{k,k}(\mA)$ and $\varphi = \otimes \,\varphi_v \in V_\pi$. 
Then we have
\[
\frac{\langle \theta(\varphi, \phi), \theta(\varphi, \phi) \rangle}{\langle \varphi, \varphi \rangle}
=C_1 \cdot L \left(\frac{1}{2}, \pi  \right)
\, L \left(\frac{1}{2}, \pi^\vee \right) \,\prod_{v} Z_v(\varphi_v, \phi_v, 0)
\]
where $\pi^\vee$ denotes the contragredient of $\pi$.
Here we decompose the Petersson inner product into
a product of local Hermitian pairing as 
$\langle \,, \, \rangle = \prod_v \langle \,, \,\rangle_v$
and
we define 
\begin{multline*}
Z_v(\varphi_v, \phi_v, s)\coloneqq 
\frac{1}{L \left(s+\frac{1}{2}, \pi_v  \right)L \left(s+\frac{1}{2}, \pi_v^\vee \right)}
\\
\times
\int_{\mathrm{GL}_k(F_v)} \frac{\langle \pi_v(g_v)\varphi_v, \varphi_v \rangle_v}{\langle \varphi_v, \varphi_v \rangle_v} 
\cdot
\Phi_{\phi_v}^s(g_v) \, dg_v.
\end{multline*}
\end{proposition}
\begin{proof}
Before investigating the global formula, we would like to remark on the
 local integral $Z_v(\varphi_v, \phi_v, s)$ at each place $v$ of $F$.
 By Yamana~\cite[Theorem~5.2]{Yam}, we know that $Z_v(\varphi_v, \phi_v, s)$
 converges absolutely for $\mathrm{Re}(s) \gg 0$ and it has a holomorphic continuation to $\mC$.
 Moreover, from the unramified computation in \cite{LR2}, 
 we have
 $Z_v(\varphi_v, \phi_v, s)=1$ at almost all places $v$.

Let us consider the global formula.
From the proof of \cite[Lemma~3]{TW2}, we have the following 
equality
\begin{equation}
\label{TW2 lemma3}
\theta(\varphi, \phi, s)(h)= \alpha(h)^{-k \slash 2} \int_{\mathrm{GL}_k(\mA)} \varphi(g) \alpha(g)^{s+k \slash 2}
\phi(h^{-1} g) \, dg
\end{equation}
where the integral converges absolutely for $\mathrm{Re}(s) \gg 0 $.
Moreover, as we remarked in the previous section, for $\mathrm{Re}(s) \gg 0$, $\theta(\varphi, \phi, s)$ is a cusp form.
Now for $s\in\mathbb C$,  we define a pairing 
\[
\langle f_1, f_2 \rangle_s \coloneq \int_{Z_k(\mA) \mathrm{GL}_k(F) \backslash \mathrm{GL}_k(\mA)}
(f_1(g) \cdot \alpha(g)^{-s}) \overline{f_2(g) \cdot \alpha(g)^{-s}} \, dg
\]
for  smooth functions $f_1$, $f_2$ on $\mathrm{GL}_k(F) \backslash \mathrm{GL}_k(\mA)$ which have the central character $\eta \cdot \alpha^s$
with a unitary character $\eta$ of $Z_k(\mA)$, provided that the integral converges.
Then applying \eqref{TW2 lemma3}, for $\mathrm{Re}(s) \gg 0$, we have
\begin{multline*}
\langle \theta(\varphi, \phi, s), \theta(\varphi, \phi, s) \rangle_s
= \int_{Z_k(\mA) \mathrm{GL}_k(F) \backslash \mathrm{GL}_k(\mA)} \int_{\mathrm{GL}_k(\mA)}
 \int_{\mathrm{GL}_k(\mA)}
 \\
 \times
 \alpha(h)^{-\frac{k}{2}-s} \overline{\alpha(h)^{-\frac{k}{2}-s}}
 \,
 \alpha(g_1)^{s+k \slash 2} \overline{\alpha(g_2)^{s+k \slash 2}}
 \\
 \times
 \varphi(g_1) \, \overline{\varphi(g_2)}  \,
\phi(h^{-1} g_1)\,
\overline{\phi(h^{-1} g_2)}   \, dg_2 \, dg_1 \, dh.
\end{multline*}
Then by changes of variables $g_i \mapsto hg_i$
for $i=1,2$, this integral is equal to
\begin{multline*}
\int_{Z_k(\mA) \mathrm{GL}_k(F) \backslash \mathrm{GL}_k(\mA)} \int_{\mathrm{GL}_k(\mA)}
 \int_{\mathrm{GL}_k(\mA)}
 \alpha(g_1)^{s+k \slash 2}\, \overline{\alpha(g_2)^{s+k \slash 2}}
 \\
 \times
 \varphi(h g_1) \, \overline{\varphi(h g_2)} \, \phi(g_1)\,
\overline{\phi(g_2)}   \, dg_2 \, dg_1 \, dh.
\end{multline*}
Furthermore, by changes of variable
 $g_1 \mapsto g_2g_1$, we obtain
\begin{multline*}
\int_{Z_k(\mA) \mathrm{GL}_k(F) \backslash \mathrm{GL}_k(\mA)} \int_{\mathrm{GL}_k(\mA)}
 \int_{\mathrm{GL}_k(\mA)}
 \alpha(g_2g_1)^{s+k \slash 2}\, \overline{\alpha(g_2)^{s+k \slash 2}}
 \\
 \times
 \varphi(h g_2g_1) \, \overline{\varphi(h g_2)} \, \phi(g_2 g_1)\,
\overline{\phi(g_2)}   \, dg_2 \, dg_1 \, dh.
\end{multline*}
Then the above integral becomes
\begin{align*}
 &\int_{Z_k(\mA) \mathrm{GL}_k(F) \backslash \mathrm{GL}_k(\mA)} \int_{\mathrm{GL}_k(\mA)} \varphi(h g)  \,\overline{\varphi(h )} \, \Phi_\phi^s(g) \, dg \, dh
 \\
 =
& \int_{\mathrm{GL}_k(\mA)}
\langle \pi(g)\varphi, \varphi \rangle
\,
\Phi_\phi^s(g) \, dg.
\end{align*}
As we noted in the beginning of the proof, from the unramified computation in \cite{LR2}, we obtain
\begin{multline*}
\langle \theta(\varphi, \phi, s),  \theta(\varphi, \phi, s) \rangle_s
= C_1 \cdot 
\langle \varphi, \varphi \rangle \cdot
L \left(s+\frac{1}{2}, \pi\right)L \left(s+\frac{1}{2}, \pi^\vee  \right)
\\
\times
\prod_{v} Z_v(\varphi_v, \phi_v, s).
\end{multline*}
Since the both sides have holomorphic continuations, we obtain our desired formula by taking $s=0$.
\end{proof}
%
%
%
%
%
\subsection{Pull-back computation of the Whittaker period}
\label{pull-back gl}
In this section, we shall give a split analogue of Proposition~\ref{pullback whittaker prp} 
for the dual pair $(\mathrm{GL}_{2n}, \mathrm{GL}_{2n})$.

Since this subsection is essentially self contained, 
we shall use  notation which is valid only in
\ref{pull-back gl} and
might not be 
consistent with the rest of the paper.
We prioritize simplicity and hope that it would not 
cause any confusion.
%
%
\subsubsection{Bessel periods on general linear groups}
First we recall the definition of
Bessel periods in the general case 
following \cite[Section~13]{GGP} and 
also \cite[Section~2.1]{Liu1}.

Let $V$ be a  vector space over $F$
and 
$V=X\oplus W\oplus E\oplus X^\ast$
be a decomposition of $V$ where
\[
\dim X=\dim X^\ast =r,\quad
\dim W=m,\quad \dim E=1.
\]
Let $P^\prime$ be the parabolic subgroup of 
$\mathrm{GL}\left(V\right)$ stabilizing the
flag $0\subset X\subset X\oplus W\oplus E \subset
V$ and
$P^\prime=M^\prime S^\prime$ its Levi decomposition
where $M^\prime$ is a Levi part and $S^\prime$ 
is the unipotent radical of $P^\prime$.
Thus
\[
M^\prime \simeq \mathrm{GL}\left(X\right)\times
\mathrm{GL}\left(W\oplus E\right)\times
\mathrm{GL}\left(X^\ast\right)
\]
and $S^\prime$ fits in the exact sequence
\[
0\to
\mathrm{Hom}\left(X^\ast , X\right)
\to
S^\prime
\to
\mathrm{Hom}\left(W\oplus E, X\right)+
\mathrm{Hom}\left(X^\ast, W\oplus E\right)
\to 0.
\]
We may write the above exact sequence as
\[
0\to
\left(X^\ast\right)^\vee\otimes X
\to
S^\prime
\to
\left(W^\vee \oplus E^\vee\right)\otimes X
+
\left(X^\ast\right)^\vee \otimes \left(W\oplus E\right)
\to 0
\]
where we denote by $Y^\vee$  the dual space of an $F$-vector
space
$Y$.

Let $\ell_X \colon X\to F$ (resp. $\ell_{X^\ast} \colon F\to X^\ast$)
be a non-trivial $F$-linear map and 
$U_X$ (resp. $U_{X^\ast}$) a maximal unipotent subgroup
of $\mathrm{GL}\left(X\right)$ 
(resp. $\mathrm{GL}\left(X^\ast\right)$) stabilizing
$\ell_X$ (resp. $\ell_{X^\ast}$).
Moreover let
\[
\ell_W\colon \left(W^\vee \oplus E^\vee\right)
+
 \left(W\oplus E\right)\to F
 \]
 be an $F$-linear map which is trivial on
 $W+W^\vee$ but non-trivial on $E$ and $E^\vee$.
Then we define a homomorphism
$\ell\colon S^\prime\to F$ by composing
\begin{multline*}
S^\prime\to 
\left(W^\vee \oplus E^\vee\right)\otimes X
+
\left(X^\ast\right)^\vee \otimes \left(W\oplus E\right)
\\
\xrightarrow{\ell_X+\ell_{X^\ast}^\vee}
\left(W^\vee \oplus E^\vee\right)
+
 \left(W\oplus E\right)
 \xrightarrow{\ell_W}
 F.
 \end{multline*}
 Since $U_X\times\mathrm{GL}\left(W\right)\times U_{X^\ast}$
 fixes $\ell$ by conjugation, we may extend $\ell$ trivially to
 a homomorphism
 \[
 \ell \colon R \coloneqq S^\prime \rtimes
 \left(U_X\times\mathrm{GL}\left(W\right)\times U_{X^\ast}\right)
 \to F.
 \]
Let $\lambda$
be a generic character of $U_X\times U_{X^\ast}$.
Then a character $\chi$ of $R$
is defined by
$\chi \coloneqq \left(\psi\circ \ell\right)\otimes\lambda$
where $\psi$ is a non-trivial character of $F$.
%
\begin{Remark}\label{dependence Bessel model}
Note that the pair $\left(R,\chi\right)$ depends
only on the spaces $W\subset V$, 
up to conjugacy by $\mathrm{GL}\left(V\right)$.
\end{Remark}
%
\subsubsection{$\left(\mathrm{GL}_{2n},\mathrm{GL}_1\right)$
case}
Now we focus on the Bessel periods of our concern,
$\left(\mathrm{GL}_{2n},\mathrm{GL}_1\right)$-Bessel periods.
Since we need to perform explicit pull-back computation,
we realize the period concretely in terms of matrices.

Let $V$ be $F^{2n}$, the space of $2n$-dimensional column vectors
over $F$
and let 
\[
e_{-1},\, e_{-2}, \,\dots, \,e_{-n},\, e_n, \dots, e_1
\] 
be the standard basis of $F^{2n}$
indexed in this way, e.g.
$e_{-1}={}^t\left(1,0,\cdots , 0\right)$,
$e_{-2}={}^t\left(0,1,0,\cdots ,0\right)$ and $e_1={}^t\left(0,\cdots , 0, 1\right)$.
We use this basis for the matrix representation
of elements of $\mathrm{GL}\left(V\right)$.
Let $X=Fe_{-1}+Fe_{-2}+\cdots +Fe_{-n+1}$,
$E=Fe_{-n}$, $W=Fe_n$
and $X^\ast=Fe_{n-1}+Fe_{n-2}+\cdots +Fe_1$.
Let $\left\{e_{\pm i}^\vee\colon 1\le i\le n\right\}$
be the basis of $V^\vee$ dual to 
the basis $\left\{e_{\pm i}\colon 1\le i\le n\right\}$ of $V$.

Then $P^\prime=M^\prime S^\prime$ where
\[
M^\prime=
\left\{\begin{pmatrix}g_1&&\\&h&\\&&g_2\end{pmatrix}\colon
g_1,g_2\in\mathrm{GL}_{n-1},\, h\in \mathrm{GL}_2
\right\}\simeq
\mathrm{GL}_{n-1}\times\mathrm{GL}_2\times\mathrm{GL}_{n-1}.
\]
Let us define $\ell_X\colon X\to F$ and $\ell_{X^\ast} \colon F\to X^\ast$ by
\[
\ell_X\left(\sum_{i=1}^{n-1}a_{-i}e_{-i}\right)=a_{-n+1}
\quad
\text{and}\quad
\ell_{X^\ast}\left(a\right)=ae_{n-1},
\]
respectively.
Then $\ell_{X^\ast}^\vee\left(x^\vee\right)=
x^\vee\left(e_{n-1}\right)$ for $x^\vee\in \left(X^\ast
\right)^\vee$.
We define 
\[
\ell_W\colon \left(W^\vee \oplus E^\vee\right)
+
 \left(W\oplus E\right)\to F
 \] by
 $
 \ell_W\left(a\,e_n^\vee+b\,e_{-n}^\vee+c\,e_n+d\,e_{-n}\right)
 \coloneqq b+d$.
For
\[
s^\prime=\begin{pmatrix}1_{n-1}&A&B\\&1_2&C\\&&1_{n-1}\end{pmatrix}
\in S^\prime
\quad\text{where}\quad
\text{$A=\left(a_{i,j}\right)$
and $C=\left(c_{i,j}\right)$},
\]
we may regard
\[
A=e_{-n}^\vee\otimes
\left(\sum_{i=1}^{n-1} a_{i,1}\,e_{-i}\right)
+
e_n^\vee\otimes
\left(\sum_{i=1}^{n-1}a_{i,2}\,e_{-i}\right)
\in 
\left(W^\vee \oplus E^\vee\right)\otimes
X
\]
and
\[
C=
\left(\sum_{j=1}^{n-1}
c_{1,j}\, e_{n-j}^\vee\right)\otimes
e_{-n}
+
\left(\sum_{j=1}^{n-1}
c_{2,j}\, e_{n-j}^\vee\right)\otimes
e_{n}
\in \left(X^\ast\right)^\vee\otimes\left(W\oplus E\right),
\]
respectively.
Hence
\[
\ell\left(s^\prime\right)=
\ell_W\left(
a_{n-1,1}\,e_{-n}^\vee+a_{n-1,2}\,e_n^\vee
+c_{1,1}\,e_{-n}+c_{2,1}\,e_n
\right)=a_{n-1,1}+c_{1,1}.
\]
Let $\psi$ be a non-trivial character of $\mathbb A\slash F$
and we define a character $\chi$ of $S^\prime\left(\mA\right)$ by
\[
\chi\left(s^\prime\right)=\psi\left(\ell\left(s^\prime\right)\right)
\quad\text{for $s^\prime\in S^\prime\left(\mA\right)$}.
\]

For a positive integer $r$, let $N_r$ denote the group of upper triangular unipotent matrices in $\mathrm{GL}_r$,
which is the unipotent radical of a Borel subgroup of $\mathrm{GL}_r$.
Then for $u,v\in N_{n-1}$, we define
$\left(u,v\right)^\wedge \in \mathrm{GL}_{2n}$ by
\[
(u, v)^\wedge \coloneqq \begin{pmatrix}u&&\\ &1_2&\\ &&v \end{pmatrix}
\]
and let $S^{\prime\prime}\coloneqq\left\{\left(u,v\right)^\wedge\colon
u,v\in N_{n-1}\right\}$.
Let us take a generic character $\lambda$ of $S^{\prime\prime}\left(\mA\right)$
given by
\[
\lambda\left(\left(u,v\right)^\wedge\right)=
\psi\left(u_{1,2}+\cdots +u_{n-2,n-1}-v_{1,2}-\cdots -v_{n-2,n-1}\right)
\]
where $u=\left(u_{i,j}\right),v=\left(v_{i,j}\right)\in N_{n-1}\left(\mA\right)$.
Then we may extend $\chi$ to a character of $S\left(\mA\right)$ where
$S\coloneqq S^\prime S^{\prime\prime}$ by 
\[
\chi\left(s^\prime\,s^{\prime\prime}\right)=
\chi\left(s^\prime\right)\lambda\left(s^{\prime\prime}\right)
\quad
\text{for $s^\prime\in S^\prime\left(\mA\right)$ and
$s^{\prime\prime}\in S^{\prime\prime}\left(\mA\right)$}.
\]
Let us define a subgroup $D$ of $\mathrm{GL}_{2n}$ by
\[
D\coloneqq\left\{d\left(a\right)\colon
a\in\mathrm{GL}_1\right\}
\quad
\text{where
$d\left(a\right)\coloneqq\begin{pmatrix}1_{n-1}&&\\
&\left(\begin{smallmatrix}1&\\&a\end{smallmatrix}\right)&\\
&&1_{n-1}\end{pmatrix}$
for 
$a\in\mathrm{GL}_1$}
\]
and let $R\coloneqq D S$.
Then for a  character $\eta$ of $\mathbb A^\times\slash F^\times$,
a character $\chi_\eta$ of $R\left(\mA\right)$ is defined by
\[
\chi_\eta\left(d\left(a\right)s\right)=
\eta\left(a\right)^{-1}\chi\left(s\right)
\quad
\text{where $a\in\mathrm{GL}_1\left(\mA\right)$
and $s\in S\left(\mA\right)$}.
\]

For a cusp form $\varphi$ on $\mathrm{GL}_{2n}\left(\mA\right)$,
its $\left(\psi,\eta\right)$-Bessel period $B_{\psi,\eta}\left(\varphi\right)$
is defined by
\begin{equation}\label{e: def of bessel for GL}
B_{\psi,\eta}\left(\varphi\right) \coloneqq
\int_{D\left(F\right)\backslash D\left(\mA\right)}
\int_{S\left(F\right)\backslash S\left(\mA\right)}
\varphi\left(st\right)\,\chi_\eta\left(st\right)^{-1}
\, ds\, dt,
\end{equation}
which converges absolutely (cf. \cite[Section~3]{JS}). 
We say that an irreducible cuspidal automorphic representation
$\left(\pi, V_\pi\right)$ has the $\left(\psi,\eta\right)$-Bessel period
when $B_{\psi,\eta}\not\equiv 0$ on $V_\pi$.
%
\begin{Remark}\label{Bessel twist}
By definition, we have
\begin{equation}\label{bessel gl change}
B_{\psi,\eta}\left(\varphi\right)=B_{\psi, 1}\left(\varphi\otimes \eta\right)
\end{equation}
where $\left(\varphi\otimes\eta\right)\left(g\right)=
\varphi\left(g\right)\eta\left(\det g\right)$
for $g\in \mathrm{GL}_{2n}\left(\mA\right)$.
\end{Remark}
%
%
%
%
\subsubsection{Pull-back of Whittaker periods}
Let $N_{i}$ be the group of upper unipotent matrices in 
$\mathrm{GL}_{i}$ and we write $N= N_{2n}$ for simplicity.
We define a generic character $\psi_0$
of $N(\mA)$ by 
\[
\psi_0 \left( u \right) \coloneqq \psi\left(
u_{1, 2}+ \cdots + u_{n-1, n}+u_{n,n+1}- u_{n+1, n+2} - \cdots - u_{2n-1, 2n}
\right)
\]
for $u=\left(u_{i,j}\right)\in N\left(\mA\right)$.

For a cusp form $\varphi$ on $\mathrm{GL}_{2n}(\mA)$, we define its Whittaker period $W\left(\varphi\right)$  by 
\[
W\left(\varphi\right) \coloneqq \int_{N(F) \backslash N(\mA)} \varphi(n) \psi_{0}(n)^{-1} \, dn.
\]

For the pull-back computation of the Whittaker period,
as a realization of the Weil representation,
we employ $\omega^\prime_{2n,2n,j}$ with $j=n$
in \eqref{another realization}.
Hereafter we simply write $\omega^\prime$ for $\omega^\prime_{2n,2n, n}$.
We recall that for $\phi \in \mathcal{S}_{2n,2n}(\mA)$
and $g,h\in\mathrm{GL}_{2n}\left(\mA\right)$, 
we have
\[
\left(\omega^\prime(g, h) \mathcal{F}_n(\phi)\,\right) \begin{pmatrix} x\\ y \end{pmatrix}
=\alpha(g)^{-n} \,\alpha(h)^{n}  \int_{\mathrm{Mat}_{n, 2n}(\mA)} \phi \left(g^{-1} \begin{pmatrix} x^\ast \\ y \end{pmatrix} h\right)
\psi (\mathrm{tr}({}^t x x^\ast)) \, dx^\ast
\]
where $x,y\in \mathrm{Mat}_{n,2n}\left(\mA\right)$.
Thus, in particular, we have
\begin{align}
\label{weil rep GL}
\begin{split}
\omega^\prime(1, h)\phi\begin{pmatrix} x\\ y \end{pmatrix} &= 
 \phi \begin{pmatrix} x\, {}^{t}h^{-1}\\ yh \end{pmatrix}
\,\, \text{for $h \in \mathrm{GL}_{2n}(\mA)$};
\\
\omega^\prime \left(\begin{pmatrix} a&\\ &b\end{pmatrix} , 1\right) \phi \begin{pmatrix} x\\ y \end{pmatrix}
&=\alpha(a b^{-1})^n \phi \begin{pmatrix} {}^t a x\\ b^{-1}y \end{pmatrix}
\,\, \text{for $a, b \in \mathrm{GL}_n(\mA)$};
\\
 \omega^\prime\left(\begin{pmatrix} 1_n&S\\ &1_n\end{pmatrix} ,1 \right) \phi\begin{pmatrix} x\\ y \end{pmatrix}
&=\psi\left[\mathrm{tr}\left({}^{t}xSy\right)\right]\phi\begin{pmatrix} x\\ y \end{pmatrix}
\\
&=\psi\left[\mathrm{tr}\left(y\,{}^{t}xS\right)\right]\phi\begin{pmatrix} x\\ y \end{pmatrix}
\,\,\text{for $S \in \mathrm{Mat}_{n, n}(\mA)$},
\end{split}
\end{align}
where
 $x, y \in \mathrm{Mat}_{n, 2n}(\mA)$.
This is a split analogue of \eqref{weil rep}.
From the above explicit action of $\omega^\prime$, the following lemma readily follows.
\begin{lemma}
Let $\mathcal{S}(\mathrm{GL}_{2n}(\mA))$ denote the space of Schwartz functions on $\mathrm{GL}_{2n}(\mA)$
which we may naturally regard as a subspace of $\mathcal{S}_{2n,2n}(\mA)$.

Then for $\phi \in \mathcal{S}(\mathrm{GL}_{2n}(\mA))$, we have $(\omega^\prime(u,h) \phi)(0)=0$
with $u \in N(\mA)$ and $h \in \mathrm{GL}_{2n}(\mA)$.
\end{lemma}
Then the following proposition holds.
%
%
%
%
%
\begin{proposition}
\label{pullback GL}
Let $(\pi, V_\pi)$ be an irreducible cuspidal automorphic representation of $\mathrm{GL}_{2n}(\mA)$.
For a cusp form $\varphi \in V_\pi$ and $\phi = \mathcal{F}_{n, \psi^{-1}}(\phi_0)$ with $\phi_0 \in \mathcal{S}(\mathrm{GL}_{2n}(\mA))$,
we have 
\begin{equation}
\label{e:pullback GL}
W\left(\theta(\varphi, \phi, s)\right) 
=  \int_{R^\prime(\mA) \backslash \mathrm{GL}_{2n}(\mA)}
\omega^\prime(1, g)
 \phi_0(z_{0})
\alpha(g)^{s}  B_{\psi, \alpha^s}(\pi(g) \varphi) \, dg
\end{equation}
for any $s \in \mC$. Here
$z_0\in \mathrm{Mat}_{2n,2n}\left(F\right)$ is given by
\begin{equation}\label{e: special point}
z_0=\begin{pmatrix}x_0\\y_0\end{pmatrix}
\quad\text{where}\quad
x_0=\begin{pmatrix}{}^{t}e_{-1}\\{}^{t}e_{-2}\\ \vdots \\{}^{t}e_{-n+1}\\{}^{t}e_{-n}\end{pmatrix}
\quad
\text{and}\quad
y_0=\begin{pmatrix}{}^{t}e_{-n}\\ {}^te_{n-1}\\ \vdots\\{}^te_{2}
 \\{}^te_{1}\end{pmatrix}
\end{equation}
and
\begin{equation}\label{e: fixed group}
R^\prime=
\left\{g\in\mathrm{GL}_{2n} \colon
x_0\, {}^tg=x_0,\, y_0\,g^{-1}=y_0\right\}.
\end{equation}

More generally, for any character $\eta$ of $\mA^\times \slash F^\times$, we have
\begin{multline}
\label{e:pullback GL gen}
W(\theta(\varphi \otimes \eta, \phi, s)) 
\\
=
\int_{R^\prime(\mA) \backslash \mathrm{GL}_{2n}(\mA)}
\omega^\prime(1, g)
 \phi_0(z_0)
\alpha(g)^{s} \eta(\det g) B_{\psi, \eta \alpha^s}(\pi(g) \varphi) \, dg.
\end{multline}
In particular, $\pi$ has the 
$(\psi, \eta)$-Bessel period for a character $\eta$ if and only if 
$\theta(\pi\otimes\eta)$ has the Whittaker period.
\end{proposition}
\begin{Remark}
Please be aware that the first row of $y_0$ is not ${}^t e_n$ but ${}^te_{-n}$.
\end{Remark}
%
%
%
%
%
%
%
%
%
%
%
%
%
%
%
%
%
%
%
%
\begin{proof}
By \eqref{bessel gl change}, the equality \eqref{e:pullback GL gen}
readily follows from \eqref{e:pullback GL} by replacing $\pi$ by $\pi\otimes\eta$.
Hence it is enough for us to prove \eqref{e:pullback GL}, i.e.
the case where $\eta$ is trivial.

The following argument is similar to the proof of Proposition~\ref{pullback whittaker prp}.

Suppose  $\mathrm{Re}(s) \gg 0$ so that $\theta(\varphi, \phi, s)$ converges absolutely and 
it is cuspidal. 
Since $N=N_M U_n$ where
\[
U_{n} =\left\{ \begin{pmatrix} 1_n&X\\ &1_n \end{pmatrix} \right\}, 
\qquad N_{M} = \left\{ \begin{pmatrix} n_1&\\ &n_2 \end{pmatrix} \colon n_i \in N_{n} \right\},
\]
we have
\[
W(\theta(\varphi, \phi, s))
= \int_{N_{M}(F) \backslash N_{M}(\mA)} \int_{U_{n}(F) \backslash U_{n}(\mA)} \theta(\varphi, \phi, s)(vu)
\psi_{0}(vu)^{-1} \, du \, dv.
\]
Further, by \eqref{another ex theta}, this integral is equal to
\begin{multline*}
 \int_{N_{M}(F) \backslash N_{M}(\mA)} \int_{U_{n}(F) \backslash U_{n}(\mA)} 
 \int_{\mathrm{GL}_{2n}(F) \backslash \mathrm{GL}_{2n}(\mA)} 
\alpha(g)^{s}
\\
\times
 \sum_{x, y \in  \mathrm{Mat}_{n, 2n}(F), \, (x,y) \ne (0, 0)} \omega^\prime(vu, g) \phi_0 \begin{pmatrix}x\\y \end{pmatrix}
 \varphi(g) \psi_{0}(vu)^{-1} \,dg \, du \, dv.
\end{multline*}
By  a computation similar to that in \cite[p.95]{Fu}, this integral is equal to
\begin{multline}
\label{e:p95}
 \int_{N_{M}(F) \backslash N_{M}(\mA)}  \int_{\mathrm{GL}_{2n}(F) \backslash \mathrm{GL}_{2n}(\mA)}
\alpha(g)^{s}
\\
\times  \sum_{\begin{pmatrix}x\\y\end{pmatrix} \in \mathcal{X}} \omega^\prime(v, g) \phi_0 \begin{pmatrix}x\\y \end{pmatrix}
\varphi(g) \psi_{0}(v)^{-1} \, dg  \, dv
\end{multline}
where
\[
\mathcal X=
\left\{\begin{pmatrix}x\\y\end{pmatrix} \colon
x,y\in \mathrm{Mat}_{n,2n}\left(F\right),
\,
y\, {}^tx=E_0
\right\}
\quad\text{with}\quad
E_0=\begin{pmatrix}0&\cdots&0&1\\ 0&\cdots&0&0\\
\vdots&&\vdots&\vdots\\  0&\cdots&0&0 \end{pmatrix}.
\]
%
%
%
\begin{lemma}
In the integral \eqref{e:p95}, only $ \begin{pmatrix}x\\y \end{pmatrix}$ such that 
$\mathrm{rank}\left(x\right)=\mathrm{rank}\left(y\right)=n$ contributes to $W(\theta(\varphi, \phi, s))$.
\end{lemma}
\begin{proof}
First we note that $N_M(F)$ acts on $\mathcal X$ via 
\[
\begin{pmatrix}n_1&\\ &n_2  \end{pmatrix} \cdot \begin{pmatrix}x\\y \end{pmatrix}
\coloneqq\begin{pmatrix}{}^{t} n_1x\\ n_2^{-1}y \end{pmatrix},
\]
since $n_2^{-1}y\, {}^t({}^{t} n_1x)= n_2^{-1} y\,{}^{t}x n_1= y\,{}^{t}x$ for $\begin{pmatrix}x\\y \end{pmatrix} \in \mathcal X$ and $n_i \in N_{n}(F)\,\left(i=1,2\right)$.
When we write $x ={}^t ({}^t x_1, {}^tx_2, \dots, {}^tx_n)$ and  $y ={}^t ({}^t y_1, {}^ty_2, \dots, {}^ty_n)$ with $x_i , y_i \in \mathrm{Mat}_{1, 2n}(F)$
for $\begin{pmatrix}x\\y \end{pmatrix} \in \mathcal X$,
we have $y_1 {}^{t}x_n=1$
and hence $x_n \ne 0$, $y_1 \ne 0$
in particular.

Suppose that $\mathrm{rank}(x) \leq n-1$. If 
there exists a linear relation $\sum_{i=1}^{n-1} a_i x_i +x_n = 0$ with $a_i \in F\,\left(1\le i\le n-1\right)$, then 
there exists $n_0 \in N_{n}(F)$ such that ${}^t n_0\, x = {}^t ({}^t x_1, \dots, {}^t x_{n-1}, 0)$. As we noted above, this case does not occur. 
Hence, $x_{1}, \dots, x_{n-1}$ are linearly dependent.
Then we may find $n_1 \in N_{n}(F)$ such that 
${}^t n_1 x = {}^{t}({}^t x_1, \dots, {}^t  x_{\ell-1}, 0, {}^t  x_{\ell+1}, \dots, {}^t  x_n)$ with some $1 \leq \ell \leq n-1$.
Hence we may suppose that $x$ is of this form. In this case, let us define 
\[
N_M^\prime \coloneqq \left\{ u \in N_{M} \colon u \cdot \begin{pmatrix}x\\y \end{pmatrix}
= \begin{pmatrix}x\\ y \end{pmatrix} \right\}
\]
and the subgroup $N_{M, \ell}$ of $N_M^\prime$ by
\[
N_{M, \ell} \coloneqq \left\{ \begin{pmatrix} u&\\ &1_n \end{pmatrix} \colon u \in N_{n} \text{ such that $u_{i,j}=0$ when $i \ne \ell$ and $j>i$}\right\}.
\]
We consider the integral
\begin{align}
& \int_{N_{M}(F) \backslash N_{M}(\mA)} \psi_{0}(v)^{-1} \sum_{u_0 \in N_M^\prime(F) \backslash N_M(F)}
 \omega^\prime(v, g) \phi_0 \left( u_0 \begin{pmatrix}x\\y \end{pmatrix} \right)  \, dv
\\
=& \notag
 \int_{N_{M}^\prime(F) \backslash N_{M}(\mA)} 
 \omega^\prime(v, g) \phi_0 \begin{pmatrix}x\\y \end{pmatrix} \psi_{0}(v)^{-1} \, dv
 \\
 =& \notag
  \int_{N_{M}^\prime(\mA) \backslash N_{M}(\mA)} \int_{N_{M}^\prime(F) \backslash N_{M}^\prime(\mA)} 
 \omega^\prime(v, g) \phi_0  \begin{pmatrix}x\\y \end{pmatrix} \psi_{0}(vv_0)^{-1} \, dv_0 \, dv.
 \end{align}
 Since $\psi_0$ is not trivial on $N_{M, \ell} (\mA)$ and hence on $N_{M}^\prime(\mA)$, the inner integral
 vanishes.
 
 The case when $\mathrm{rank}(y) \leq n-1$ is proved in a similar way.
\end{proof}
Let us define 
\[
\mathcal X^\prime\coloneqq
\left\{\begin{pmatrix}x\\y\end{pmatrix}
\in \mathcal X \colon
\mathrm{rank}(x) = \mathrm{rank}(y) = n
\right\}.
\]
\begin{lemma}
$\mathrm{GL}_{2n}(F)$ acts transitively on $\mathcal X^\prime$ 
by the action
\[
g\cdot \begin{pmatrix}x\\y\end{pmatrix}
\coloneqq
\begin{pmatrix}x\,\,{}^tg\\ y\,g^{-1}\end{pmatrix}
\quad\text{for $g\in\mathrm{GL}_{2n}\left(\mA\right)$ and
$x,y\in\mathrm{Mat}_{n,2n}\left(\mA\right)$}.
\]
\end{lemma}
\begin{proof}
Since $\mathrm{rank}(x)=n$, we may take $g_1 \in \mathrm{GL}_{2n}(F)$
such that 
\[
g_1\cdot \begin{pmatrix}x\\y\end{pmatrix}=\begin{pmatrix} x_0\\ y^\prime \end{pmatrix}.
\]
By the condition $y^\prime\, {}^{t}x_0= E_0$,
we see that $y^\prime$ is of the form $\left( E_0 , B \right)$ with some $B \in \mathrm{Mat}_{n, n}(F)$.
Further, $\mathrm{rank}(y^\prime) =n$ implies that there exists $g \in \mathrm{GL}_n(F)$ so that 
\[
B g^{-1} = {}^t({}^t b, {}^{t}f_{2}, \dots, {}^t f_{n})
\]
with some $b \in \mathrm{Mat}_{1, n}(F)$ and the standard basis $f_1=(1,0, \dots, 0), \dots ,f_n=(0, \dots, 0,1)$ of $\mathrm{Mat}_{1, n}(F)$.
Then 
\[
\begin{pmatrix} 1_n&\\ &g \end{pmatrix} \cdot \begin{pmatrix} x_0\\ y^\prime \end{pmatrix} = \begin{pmatrix} x_0\\ (E_0, ({}^t b, {}^{t}f_2, \dots, {}^t f_{n})) \end{pmatrix}.
\]
Finally, when we put 
\[
u_b = \begin{pmatrix}1_n &\begin{pmatrix} 0_{n-1, n}\\ b\end{pmatrix} \\ &1_n \end{pmatrix},
\]
we obtain 
\[
u_b \cdot \begin{pmatrix} x_0\\ (E_0, ({}^t b, {}^{t}f_2, \dots, {}^t f_{n})) \end{pmatrix}  =  \begin{pmatrix} x_0\\ y_0\end{pmatrix}.
\]
\end{proof}
By these two lemmas, we may write the integral \eqref{e:p95} as 
\begin{align*}
&\int_{N_{M}(F) \backslash N_{M}(\mA)}  \int_{\mathrm{GL}_{2n}(F) \backslash \mathrm{GL}_{2n}(\mA)} 
 \\
 &\qquad\qquad
 \times
 \sum_{\gamma \in R^\prime(F) \backslash \mathrm{GL}_{2n}(F)} 
\alpha(g)^{s}\, \omega^\prime(v, g) \phi_0 \left( \gamma^{-1} \cdot z_0\right)
\,\varphi(g)\, \psi_0(v)^{-1}\, dg  \, dv
\\
=
 &\int_{N_{M}(F) \backslash N_{M}(\mA)}  \int_{R^\prime(F) \backslash \mathrm{GL}_{2n}(\mA)} 
\alpha(g)^{s}\,
 \omega^\prime(v, g)
\,\phi_0 \left( z_0\right)\,\varphi(g)
\, \psi_{0}(v)^{-1}\, dg  \, dv
\end{align*}
where $z_0$ is given by \eqref{e: special point}
and $R^\prime$ is defined by 
\eqref{e: fixed group}.
We note that 
\[
R^\prime = D \,S^{\prime}_{0}
\]
with
\[
 S^{\prime}_{0}= \left\{ \begin{pmatrix}1_{n-1}&A&B\\ &1_2&C\\ &&1_{n-1} \end{pmatrix}\colon
 A\begin{pmatrix}1\\0\end{pmatrix}=0, \, 
 \begin{pmatrix}1&0\end{pmatrix}C=0 \right\}.
\]
Then by \eqref{weil rep GL}, we obtain
\begin{align*}
& \int_{R^\prime(\mA) \backslash \mathrm{GL}_{2n}(\mA)} \int_{D(F) \backslash D(\mA)}
  \int_{ S^{\prime}_{0}(F) \backslash  S^{\prime}_{0}(\mA)} 
  \int_{N_{M}(F) \backslash N_{M}(\mA)} 
  \\
&\qquad\qquad
\times  \alpha(h g)^{s}\,  \omega^\prime(v, h s^{\prime} g)
 \phi_0 \left( z_0\right)
\,\varphi(h s^{\prime} g)\, \psi_0(v)^{-1} \, dv \, ds^{\prime} \, dh \, dg
\\
=
& \int_{R^\prime(\mA) \backslash \mathrm{GL}_{2n}(\mA)} \int_{D(F) \backslash D(\mA)}
  \int_{ S^{\prime}_{0}(F) \backslash  S^{\prime}_{0}(\mA)} 
  \int_{N_{M}(F) \backslash N_{M}(\mA)}  
  \\
 &\qquad\qquad \times
\alpha(h g)^{s} \,\omega^\prime(v, g)  \phi_0 \left( z_0\right)
\,\varphi(h s^{\prime} g) \,\psi_0(v)^{-1}  \, dv \, ds^{\prime} \, dh \, dg.
\end{align*}
Then by an argument similar to that
 in \cite[p.96--98]{Fu}, we have
\begin{multline}\label{before hc}
W(\theta(\varphi, \phi, s))
=
 \int_{R^\prime(\mA) \backslash \mathrm{GL}_{2n}(\mA)}
\alpha(g)^{s}\,  \omega^\prime(1, g)
 \phi_0 \left( z_0\right)
 \\
 \times
  \int_{R(F) \backslash R(\mA)}
  \chi_{\alpha^s}\left(r\right)^{-1} \varphi(rg)\, dr \, dg.
\end{multline}
 We note that in the computation above, the element $s_j(a_j)$ in \cite[p.97]{Fu} should be replaced by 
\[
s_j(a_j, b_j) = \begin{pmatrix}1&a_j&0\\ &1&-b_j\\ &&1 \end{pmatrix}
\]
as in the non-split case.
Since \eqref{before hc} holds for $\mathrm{Re}(s) \gg 0$, by taking the holomorphic continuations
of both sides,
we obtain \eqref{e:pullback GL}. Hence \eqref{e:pullback GL gen} holds for any $\eta$
as we remarked in the beginning of the proof.

From \eqref{e:pullback GL gen},
$\pi$ has the $\left(\psi,\eta\right)$-Bessel period for a character $\eta$
if and only if $\theta\left(\pi\otimes\eta\right)$ has the Whittaker period
by an argument similar to that for \cite[Proposition~2]{FM1}.
\end{proof}
%
%
%
\begin{corollary}\label{non-vanishing corollary}
Keep the  notation above. Then 
an irreducible cuspidal automorphic representation
$\pi$ of $\mathrm{GL}_{2n}\left(\mA\right)$
has the $(\psi, \eta)$-Bessel period 
if and only if $L \left( \frac{1}{2}, \pi \times \eta \right) \ne 0$.
\end{corollary}
\begin{proof}
Suppose that $\pi$ has the $(\psi, \eta)$-Bessel period.
Then by Proposition~\ref{pullback GL}, $\theta(\pi\otimes\eta)$ has the
Whittaker period.
In particular, $\theta(\pi\otimes\eta) \ne 0$, and we have $L \left( \frac{1}{2}, \pi \times \eta \right) \ne 0$ 
by Theorem~\ref{TW thm1}.

Conversely, suppose that $L \left( \frac{1}{2}, \pi \times \eta \right) \ne 0$.
Then by Theorem~\ref{TW thm1}, $\theta(\pi\otimes\eta) \ne \left\{0\right\}$.
Since $\theta(\pi\otimes\eta)$ is cuspidal, 
the Whittaker period does not vanish on $\theta(\pi\otimes\eta)$.
Thus $\pi$ has the $(\psi, \eta)$-Bessel period
by Proposition~\ref{pullback GL}.
\end{proof}
\begin{Remark}
Let $\pi$ and $\sigma$ be irreducible cuspidal automorphic representations
of $\mathrm{GL}_n(\mA)$ and $\mathrm{GL}_m(\mA)$, respectively,
with $n>m$ and $n \not \equiv m$ (mod 2).
Then Liu~\cite{Liu1} showed that $\pi$ has the $\sigma$-Bessel period
if and only if $L\left(\frac{1}{2}, \pi \times \sigma \right) \ne 0$
by considering certain Rankin--Selberg integrals.
The proof of Corollary~\ref{non-vanishing corollary} 
gives another proof in the special case $\left(\mathrm{GL}_{2n},\mathrm{GL}_1\right)$.
\end{Remark}
%
%
%
%
%
%
%
%
\subsection{Proof of Theorem~\ref{refined GGP GL}}
First we recall the following explicit formula of the
Whittaker periods for $\mathrm{GL}_m$.
\begin{theorem}[Theorem~4.1 in \cite{LM}]
\label{LM GL thm}
Let $(\pi_0, V_{\pi_0})$ be an irreducible cuspidal tempered automorphic representation of $\mathrm{GL}_{m}(\mA)$.
Then for any non-zero decomposable vector $\varphi = \otimes \varphi_v \in V_{\pi_0}$, we have 
\[
\frac{|W_{\psi_\lambda}(\varphi)|^2}{\langle \varphi, \varphi \rangle}
= \frac{\prod_{j=2}^m \zeta_F(j)}{L(1, \pi_0, \mathrm{Ad})} \prod_v \frac{\beta_v(\varphi_v)}{\langle \varphi_v, \varphi_v \rangle}
\]
where
$W_{\psi_\lambda}$ denotes the Whittaker period
with respect to a generic character $\psi_\lambda$ and
\[
\beta_v(\varphi_v) \coloneqq \frac{L(1, \pi_{0,v}, \mathrm{Ad})}{\prod_{j=2}^m \zeta_{F_v}(j)}
\int_{N_m(F_v)}^{\rm st} \langle \pi_{0, v}(n)\varphi_v, \varphi_v \rangle \psi_{\lambda,v}(n)^{-1} \, dn.
\]
\end{theorem}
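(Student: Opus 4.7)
The plan is to prove this explicit Whittaker-period formula by combining the Jacquet-Shalika Rankin-Selberg integral representation of $L(s, \pi_0 \times \pi_0^\vee)$ with Lapid-Mao's theory of stable local Whittaker integrals. The central arithmetic input is the factorization $L(s, \pi_0 \times \pi_0^\vee) = \zeta_F(s)\, L(s, \pi_0, \mathrm{Ad})$, whose residue at $s=1$ produces $\zeta_F^{\ast}(1)\, L(1, \pi_0, \mathrm{Ad})$ and thereby introduces the adjoint $L$-value on the analytic side.

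First, I would unfold the Jacquet-Shalika Rankin-Selberg integral
\[
I(s, \phi, \phi, \Phi) := \int_{Z(\mA)\, \mathrm{GL}_m(F) \backslash \mathrm{GL}_m(\mA)} \phi(g)\, \overline{\phi(g)}\, E(g, s, \Phi)\, dg
\]
against a standard Eisenstein series $E(g, s, \Phi)$ attached to $\Phi \in \mathcal{S}(\mA^m)$. On the spectral side the residue at $s=1$ is $\langle \phi, \phi \rangle \cdot \Phi(0) \cdot \zeta_F^{\ast}(1)$. On the geometric side, inserting the Whittaker-Fourier expansion of $\phi$ and $\overline{\phi}$ factorizes the integral into an Euler product of local zeta integrals in the Whittaker model, whose residue computes $L(1, \pi_0, \mathrm{Ad})$ up to local contributions. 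Comparing the two expressions gives a factored identity for $\langle \phi, \phi \rangle$ in terms of the adjoint $L$-value and local Whittaker-Rankin-Selberg integrals.

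Next I would express $|W_{\psi, \lambda}(\phi)|^2$ as a double Whittaker period and, via the Whittaker-Fourier expansion, match it with the Rankin-Selberg local factors appearing above in the limit $s \to 1$. After isolating the ratio $\prod_{j=2}^m \zeta_F(j) / L(1, \pi_0, \mathrm{Ad})$ globally, each local factor is forced to take the form $\beta_v(\phi_v) / \langle \phi_v, \phi_v \rangle_v$, with the unramified case verified by the spherical computation showing $\beta_v(\phi_v) = \langle \phi_v, \phi_v \rangle_v$ when $\phi_v$ is a normalized $K_v$-fixed vector.

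The main obstacle is the definition and analysis of the stable integral $\beta_v(\phi_v)$ at archimedean and ramified non-archimedean places, since the naive integral $\int_{N_s(F_v)} \langle \pi_{0, v}(n) \phi_v, \phi_v \rangle\, \psi_{\lambda, s, v}^{-1}(n)\, dn$ diverges. I would adopt Lapid-Mao's framework: truncate the integral to $\{n \in N_s(F_v) : |n_{i, i+1}| \leq e^\gamma\}$ and prove that the truncated integral eventually becomes independent of $\gamma$ at non-archimedean places, and extends to a smooth tempered distribution on the appropriate quotient at archimedean places. Establishing this stability --- which ultimately rests on the Bernstein-Zelevinsky derivative structure of tempered representations of $\mathrm{GL}_m(F_v)$ and on the rapid decay of matrix coefficients --- is the delicate local heart of the argument and closes the proof once married to the global Rankin-Selberg residue identity.
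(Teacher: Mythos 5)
This statement is not proved in the paper at all: it is imported verbatim as Theorem~4.1 of Lapid--Mao \cite{LM}, so the only meaningful comparison is with the proof in that reference. Your overall architecture is indeed the one Lapid--Mao use: unfold the Jacquet--Shalika integral of $\phi\,\overline{\phi}$ against the mirabolic Eisenstein series, take the residue at $s=1$ using $L(s,\pi_0\times\pi_0^\vee)=\zeta_F(s)L(s,\pi_0,\mathrm{Ad})$, and compare with a factorizable expression built from local Whittaker-model integrals, with the unramified computation handling almost all places. So the strategy is sound and consistent with the source.

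The genuine gap is in the step where you say each local factor is ``forced'' to be $\beta_v(\phi_v)/\langle\phi_v,\phi_v\rangle_v$. A global identity between two Eulerian expressions, valid for all decomposable vectors, only determines the local factors up to constants $c_v$ with $\prod_v c_v=1$ at the finitely many bad places; this is exactly the shape of the statement quoted in the paper as \eqref{conj pre} (Lapid--Mao's Theorem~5.5), where the whole content of the conjecture is that the unspecified constants equal $1$ (equivalently $\omega_{\sigma_v}(-1)$ in the unitary case). What makes the $\mathrm{GL}_m$ case an unconditional theorem is precisely the \emph{local} identity equating the value at $s=1$ of the local Rankin--Selberg functional in the Whittaker model (equivalently, the invariant inner product expressed through Whittaker functions) with the stable matrix-coefficient integral defining $\beta_v$, at every ramified non-archimedean and archimedean place. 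That identity is not a consequence of the stability/temperedness analysis you describe (which only shows $\beta_v$ is well defined), nor of the spherical computation; it is the delicate part of Lapid--Mao's argument, resting on Bessel-distribution/local functional equation techniques for $\mathrm{GL}_m$. Without supplying (or explicitly invoking) that local theorem, your global residue comparison proves the formula only up to an undetermined product of local constants, which falls short of the stated result.
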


Let us turn to the proof of Theorem~\ref{refined GGP GL}.
The equality \eqref{refined ggp gl}
is proved in the following way.

Suppose that $\pi$ has the $(\psi, \eta)$-Bessel period.
Then as in the proof of Theorem~\ref{refined ggp thm},
our desired equality \eqref{refined ggp gl} is reduced to 
a certain local identity by combining the
Rallis inner product formula (Proposition~\ref{RIP GL}), the pull-back formula
of the Whittaker periods 
(Proposition~\ref{pullback GL}) and Lapid--Mao's formula (Theorem~\ref{LM GL thm}).
Indeed, the local identity in question is nothing but \eqref{e: local equality},
which we already proved as Proposition~\ref{prp: local equality}.
Thus \eqref{refined ggp gl} holds.
Note that the appearance of $\mathrm{Res}_{s=1}\zeta_F(s)$ 
in  \eqref{refined ggp gl} is due to our normalization
of the measure \eqref{u measure}.

Suppose that $\pi$ does not have the $(\psi, \eta)$-Bessel period. Then by Proposition~\ref{pullback GL},
$L \left(\frac{1}{2}, \pi \times \eta \right) =0$ and our formula follows
since the both sides of \eqref{refined ggp gl} vanish.
%
%
%
%
%
%
%
%
%
%
%
%
%
%
%
%
%
%
%
%
\appendix
\section{Similitude unitary groups case}
\label{s:Similitude unitary groups case}
In this appendix, 
we shall prove the analogues of Theorem~\ref{opp GGP} and Theorem~\ref{refined ggp thm} in the case of similitude unitary groups.
They shall be of essential use in 
our forthcoming paper~\cite{FM3} on the
Gan--Gross--Prasad conjecture and its refinement
for $\left(\mathrm{SO}\left(5\right), \mathrm{SO}\left(2\right)\right)$.
%
\subsection{Set up}
As in \ref{subsection unitary group}, 
let $(V, ( \, , \,)_V )$ be
 a $2n$-dimensional Hermitian space over $E$ with a 
non-degenerate Hermitian pairing $(\, , \,)_V$
where the Witt index of $V$ is at least $n-1$.
Then we denote by $\widetilde{\mathcal{G}}_n$  the set of 
$F$-isomorphism classes of the similitude unitary groups $\mathrm{GU}(V)$ for such $V$.
By abuse of notation, we shall often identify $\mathrm{GU}(V)$ with 
its isomorphism class in $\widetilde{\mathcal{G}}_n$.
Let $\mathbb V_n$ be a $2n$-dimensional Hermitian space
with the Witt index $n$.
Let $\widetilde{\mathbb{G}}_n$ denote
the similitude unitary group $\mathrm{GU}\left(\mathbb V_n\right)$.
As in \ref{subsection unitary group}, 
we may realize $\widetilde{\mathbb{G}}_n$
as
\[
 \widetilde{\mathbb{G}}_n =\left\{ h \in \mathrm{GL}_{2n}(E) \colon 
 {}^{t}\bar{h}\, w_{2n}\, h = \nu(h) w_{2n}, \, \nu(h) \in F^\times \right\}
\]
in matrices.
We often abbreviate $\mathbb V_n$ to $\mathbb V$
and $\widetilde{\mathbb{G}}_n$ to 
$\widetilde{\mathbb{G}}$.

As in \ref{subsection skew hermtian},
let $\left(\mathbb W_n, \left(\, , \, \right)_{\mathbb W}\right)$
be a $2n$-dimensional skew-Hermitian space over $E$
with non-degenerate skew Hermitian pairing $\left(\, , \,\right)_{\mathbb W}$
whose Witt index is $n$.
Let $\widetilde{\mathbb{G}}^-_n$ denote the similitude
unitary group $\mathrm{GU}\left(\mathbb W_n\right)$.
As in \ref{subsection skew hermtian},
we may realize $\widetilde{\mathbb{G}}^-_n$
as 
\[
\widetilde{\mathbb{G}}^-_n=
\left\{h\in \mathrm{GL}_{2n}\left(E\right)\colon
{}^t \bar{h}\, J_n\, h=\nu\left(h\right)  J_n, \, \nu(h) \in F^\times
\right\}.
\]
We often abbreviate $\mathbb W_n$ to $\mathbb W$
and $\widetilde{\mathbb{G}}^-_n$ to 
$\widetilde{\mathbb{G}}^-$.
We note that as \eqref{pm iso} we have
\begin{equation}\label{unitary isomorphism}
\widetilde{\mathbb{G}}^-_n\simeq
\widetilde{\mathbb G}_n.
\end{equation}
%
%
%
\subsection{Similitude theta correspondence}
\label{similitude theta}
Let $\widetilde{G} = \mathrm{GU}(V) \in \widetilde{\mathcal{G}}_n$.
We define a subgroup $R$ of $\widetilde{G} \times \widetilde{\mathbb{G}}^-_n$
by
\[
R \coloneqq \left\{ (g, h) \in \widetilde{G} \times \widetilde{\mathbb{G}}^-_n  \colon \nu(g) = \nu(h) \right\}
\]
and we regard this as a subgroup of $\mathrm{Sp}( \mathrm{Res}_{E \slash F}(V \otimes_E \mathbb{W}_n))$.
As in Section~\ref{ss:weil rep}, let us denote by $\omega_{\psi, \chi}$ the Weil representation of $R\left(\mA\right)$
associated to $\psi$ and $\chi = (\chi_V, \chi_{\mathbb{W}_n})$,
a pair of characters of 
$\mathbb A_E^\times\slash E^\times\,\mA_F^\times$. 


For $\phi \in \mathcal{S}(V(\mA)^n)$, let $\theta_{\psi, \chi}^\phi(g, h)$ be the theta function on $R\left(\mA\right)$
defined similarly to \eqref{theta fct def}. Then for a cusp form $\varphi$ on $\widetilde{G}(\mA)$, we define its
 theta lift  by 
\[
\theta_{\psi, \chi}^\phi(\varphi)(h) = \int_{\mathrm{U}(V)(F) \backslash \mathrm{U}(V)(\mA)} \theta_{\psi, \chi}^\phi(g_1g,  h) \varphi(g_1 g) \, dg_1
\]
where
 \[
 h\in\widetilde{\mathbb{G}}^-_n(\mA)^+\coloneqq \{ h \in 
 \widetilde{\mathbb{G}}^-_n(\mA) \colon \nu(h) \in \nu(\widetilde{G}(\mA)) \}
 \]
 and
$g \in \widetilde{G}(\mA)$ is chosen so that 
$\nu(g) = \nu(h)$.
It converges absolutely and defines an automorphic form on 
$\widetilde{\mathbb{G}}^-_n(\mA)^+$.
Let us put
\[
\widetilde{\mathbb{G}}^-_n(F)^+ =\widetilde{\mathbb{G}}^-_n(\mA)^+ \cap  
\widetilde{\mathbb{G}}^-_n(F).
\]
Then we note that
\[
\widetilde{\mathbb{G}}^-_n(F)^+= \left\{ g \in \widetilde{\mathbb{G}}^-_n(F) ; \nu(g) = \nu(h)  \, \text{ for some 
$h \in \widetilde{G}(F)$}
\right\}.
\]
Indeed, it suffices to check the identity when $n=1$ since  the Witt index of $V$ is at least $n-1$.
When $n=1$, we know that $G(F) \simeq D_0^\times(F) \times E^\times \slash \{(a, a^{-1}) : a \in F^\times \}$ 
with some quaternion algebra $D_0$ over $F$. 
Then $\nu(G(\mA)) = \mathrm{N}_0(D_0^\times(\mA))$ 
where $\mathrm{N}_0$ denotes the reduced norm  of $D_0$.
By Eichler's norm theorem (see Vigneras~\cite[Th\'{e}o. III.4.1]{Vig80}), 
we obtain 
$\nu(\widetilde{G}(\mA)) \cap F^\times = \nu(\widetilde{G}(F))$
and the above identity holds.
By this identity, we see that $\theta_{\psi, \chi}^\phi(\varphi)$ is left $\widetilde{\mathbb{G}}^-_n(F)^+$-invariant.
Further, we extend $\theta_{\psi, \chi}^\phi(\varphi)$ to an automorphic form on $\widetilde{\mathbb{G}}^-_n(\mA)$ by 
the natural embedding
\[
\widetilde{\mathbb{G}}^-_n(F)^+ \backslash \widetilde{\mathbb{G}}^-_n(\mA)^+ \rightarrow 
\widetilde{\mathbb{G}}^-_n(F) \backslash 
\widetilde{\mathbb{G}}^-_n(\mA)
\]
and extension by zero.
Similarly, for a cusp form $f$ on $\widetilde{\mathbb{G}}^-_n(\mA)$,
the theta lift $\theta_{\psi, \chi}^\phi(f)$, which is an automorphic form on $\widetilde{G}(\mA)$, is defined.
Then as in the case of isometry groups, for an irreducible cuspidal automorphic representation $(\pi, V_\pi)$
of $\widetilde{G}(\mA)$, its  theta lift to
$\widetilde{\mathbb G}^-_n\left(\mA\right)$ 
is defined by
\[
\Theta_{V, \mathbb{W}_n}(\pi, \psi, \chi) \coloneqq \langle \theta_{\psi, \chi}^\phi(\varphi) ; \phi \in \mathcal{S}(V(\mA)^n), \varphi \in V_\pi \rangle
\]
where the right-hand side means the automorphic
representation of $\widetilde{\mathbb G}^-_n\left(\mA\right)$
generated.
Conversely, for an irreducible cuspidal automorphic representation $(\sigma, V_\sigma)$ of $\widetilde{\mathbb{G}}^-_n(\mA)$, its
 theta lift to $\widetilde{G}(\mA)$, denoted by
$\Theta_{\mathbb{W}_n, V}(\sigma, \psi, \chi)$, 
is similarly defined.

By taking into account the completion of the proof of the
Howe duality
conjecture by Gan and Takeda~\cite{GT},
we note that  the Howe duality  in the similitude unitary groups case
is deduced from the one in the isometry groups case
by Zhang~\cite[Theorem~4.3]{CZha}.
%
\subsection{Bessel periods}
Let $(\pi, V_\pi)$ be an irreducible cuspidal tempered automorphic representation of $\widetilde{G}(\mA)$ for $\widetilde{G} = \mathrm{GU}(V)\in \widetilde{\mathcal{G}}_n$.
Then the $\left(e,\psi,\Lambda\right)$-Bessel period
of $\varphi\in V_\pi$ is defined by exactly the
same integral as in Definition~\ref{def of bessel}
for the isometry group case, i.e.
\[
B_{e, \psi, \Lambda}(\varphi) = \int_{D_e(F) \backslash D_e(\mA)} \int_{S(F) \backslash S(\mA)}
\chi_{e, \Lambda}(ts)^{-1} \varphi(ts) \, ds \, dt.
\]
Thus we need to give a thought to
 the restriction $\pi\mid_{
\mathrm{U}\left(V\right)\left(\mA\right)}$
of $\pi$ to $\mathrm{U}\left(V\right)\left(\mA\right)$.

By Adler and Prasad~\cite[Theorem~1.4]{AP}, we have 
\[
\mathrm{dim}_\mC \, \mathrm{Hom}_{\mathrm{U}(V)(F_v)}\,(\tau, \pi_v) \leq 1
\]
for any irreducible admissible representation $\tau$ of $\mathrm{U}(V)(F_v)$.
Let $S$ be a finite set of places of $F$ such that 
at $v \not \in S$, $\pi_v$ is unramified and
$E_v$ is either an unramified quadratic extension field
of $F_v$ or 
$E_v = F_v \oplus F_v$.  
Let $\pi_1$ and $\pi_2$ be irreducible constituents of $\pi\mid_{
\mathrm{U}\left(V\right)\left(\mA\right)}$. 
Since $\pi$ is tempered, $\pi_1$ and $\pi_2$ are also tempered. 
As remarked in \cite[Remark~4.1.1]{GL}, we know that $\pi_{1,v}$ and $\pi_{2,v}$ are isomorphic at $v \not \in S$.
In particular, $\pi_1$ and $\pi_2$ are nearly equivalent.
Therefore, they have the same local $L$-parameter at each place.
Since the size of local $L$-packet is finite,
there exist only a finite number of possible
$\pi_{i,v}$ for $v \in S$.
Combining all these, we see that 
$\pi\mid_{
\mathrm{U}\left(V\right)\left(\mA\right)}$
 is a finite sum of irreducible constituents and we denote
\begin{equation}\label{restriction to isometry}
\pi\mid_{
\mathrm{U}\left(V\right)\left(\mA\right)}
=
\bigoplus_{i=1}^\ell \pi_{0, i}
\end{equation}
where $\pi_{0, i}$ ($1\le i\le \ell$)
 is an irreducible 
cuspidal tempered automorphic representations of $\mathrm{U}\left(V\right)\left(\mA\right)$
and each $\pi_{0, i}$ has the same $L$-parameter.
Thus
 we define
\[
L(s, \pi_v \times \Lambda_v) \coloneqq L(s, \pi_{0, i, v} \times \Lambda_v)
\quad 
\text{and}
\quad
L(s, \pi \times \Lambda) \coloneqq L(s, \pi_{0, i} \times \Lambda).
\]
%

We note the following multiplicity one theorem.
%
\begin{proposition}\label{pa: multiplicity one}
Keep the above notation. Then we have
\begin{equation}\label{e: local multiplicity one}
\mathrm{dim}_\mC\, \mathrm{Hom}_{R_{e}(F_v)}(\pi_v, \chi_{e, \Lambda, v}) \leq 1
\end{equation}
for any place $v$ of $F$.
\end{proposition}
%
\begin{proof}
Let $v$ be a place of $F$.
When $\pi_{v} |_{\mathrm{U}(V)(F_v)}$ is irreducible, 
\eqref{e: local multiplicity one}
 follows from the multiplicity
one theorem \eqref{uniqueness} in the  isometry group case.
Since $\pi_{v} |_{\mathrm{U}(V)(F_v)}$
is always irreducible when  $v$ is split in $E$,
we may suppose that $v$ is non-split in $E$ and that $\pi_{v} |_{\mathrm{U}(V)(F_v)} = \pi_{1, v} \oplus \pi_{2,v}$
with irreducible representations $\pi_{i, v}$ of $\mathrm{U}(V)(F_v)$.
We note that $\pi_{i, v}$ appears as a local component of some $\pi_{0,j}$ in \eqref{restriction to isometry}.
Hence 
$\pi_{1, v}$ and $\pi_{2,v}$ have the same $L$-parameter.
%
By Beuzart-Plessis~\cite{BP1,BP2},  there exists
at most  one element in a given Vogan $L$-packet
which has the $(e, \psi_v,  \Lambda_v)$-Bessel model. Thus, at most one of $\pi_{1, v}$ 
and $\pi_{2,v}$ has the
$(e, \psi_v, \Lambda_v)$-Bessel model. Hence we may suppose that 
 $\pi_{1, v}$  does not have the
 $(e, \psi_v, \Lambda_v)$-Bessel model.
 Then we have
 \[
\mathrm{Hom}_{R_{e}(F_v)}(\pi_v, \chi_{e, \Lambda, e})
=\mathrm{Hom}_{R_{e}(F_v)}(\pi_{2, v}, \chi_{e, \Lambda, v}),
\]
and our assertion follows from the multiplicity
one theorem \eqref{uniqueness} in the isometry group case.
\end{proof}
%
From the proof of Proposition~\ref{pa: multiplicity one}
 and the decomposition \eqref{restriction to isometry}, the following corollary readily follows.
%
\begin{corollary}
\label{app cor}
Suppose that $B_{e, \psi, \Lambda} \not \equiv 0$ on $V_\pi$.
Then there exists a  unique constituent $\pi_{0,i}$
in \eqref{restriction to isometry}
such that $B_{e, \psi, \Lambda} \not \equiv 0$ on $V_{\pi_{0, i}}$.
\end{corollary}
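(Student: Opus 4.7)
The plan is to read off Corollary~\ref{app cor} from the proof of the preceding proposition, with only a small additional observation about restriction of the Bessel period to each irreducible summand of $\pi\mid_{\mathrm{U}(V)(\mathbb A)}$.

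First, I would establish existence. Decompose $V_\pi = \bigoplus_{i=1}^\ell V_{\pi_{0,i}}$. Since the Bessel period
\[
B_{e,\psi,\Lambda}(\varphi) = \int_{D_e(F)\backslash D_e(\mathbb A)}\int_{S(F)\backslash S(\mathbb A)} \chi_{e,\Lambda}(ts)^{-1}\,\varphi(ts)\,ds\,dt
\]
is a $\chi_{e,\Lambda}$-equivariant linear functional on $V_\pi$ regarded as a representation of $R_e(\mathbb A)\subset \mathrm{U}(V)(\mathbb A)$, and since this direct sum decomposition is one of $\mathrm{U}(V)(\mathbb A)$-representations, the restriction of $B_{e,\psi,\Lambda}$ to each $V_{\pi_{0,i}}$ is precisely the Bessel period of the subrepresentation $\pi_{0,i}$. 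If $B_{e,\psi,\Lambda}\not\equiv 0$ on $V_\pi$, then at least one of these restrictions is non-zero, which gives at least one index $i_0$ such that $B_{e,\psi,\Lambda}\not\equiv 0$ on $V_{\pi_{0,i_0}}$.

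Next, I would argue uniqueness by invoking exactly the local argument already used in the proof of the preceding proposition. Suppose for contradiction that $B_{e,\psi,\Lambda}\not\equiv 0$ on both $V_{\pi_{0,i}}$ and $V_{\pi_{0,j}}$ for some $i\neq j$. Since $\pi_{0,i}\not\simeq \pi_{0,j}$, there exists a place $v$ of $F$ at which $\pi_{0,i,v}\not\simeq \pi_{0,j,v}$; such a place is necessarily inert in $E$ because at split places the local components coincide. The non-vanishing of the global Bessel period on each of $V_{\pi_{0,i}}$ and $V_{\pi_{0,j}}$ forces
\[
\operatorname{Hom}_{R_e(F_v)}(\pi_{0,i,v},\chi_{e,\Lambda_v})\neq 0 \quad\text{and}\quad \operatorname{Hom}_{R_e(F_v)}(\pi_{0,j,v},\chi_{e,\Lambda_v})\neq 0.
\]
But Beuzart-Plessis~\cite{BP1,BP2} asserts that at an inert place exactly one representation in a given Vogan packet admits the $(e,\psi_v,\Lambda_v)$-Bessel model, contradicting that both $\pi_{0,i,v}$ and $\pi_{0,j,v}$ (which lie in the same Vogan packet since $\pi_{0,i}$ and $\pi_{0,j}$ share their $L$-parameter) carry such a model. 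Hence the index $i_0$ is unique.

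No step here is an obstacle, since all of the heavy lifting—the multiplicity-one input of Beuzart-Plessis and the local-global compatibility of $L$-parameters among the $\pi_{0,i}$—has already been set up in the previous proposition. The only subtle point is ensuring that the restriction of the global Bessel period to $V_{\pi_{0,i}}$ really is the Bessel period of $\pi_{0,i}$, but this is immediate from the fact that the defining integral is intrinsic to $R_e(\mathbb A)$-equivariant functionals and the decomposition $V_\pi=\bigoplus V_{\pi_{0,i}}$ is $\mathrm{U}(V)(\mathbb A)$-equivariant.
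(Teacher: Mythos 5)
Your proof is correct and follows essentially the same route as the paper. The paper derives the corollary from the preceding proposition, whose proof is an induction on the number $\ell$ of constituents: at each step one picks a place $v$ where $\pi_{0,1,v}\not\simeq\pi_{0,2,v}$ (necessarily inert), applies the Beuzart-Plessis uniqueness of Bessel models within a Vogan packet to discard one summand, and repeats. You replace the induction with a direct pairwise contradiction, but the underlying mechanism — locate a distinguishing inert place and invoke local uniqueness of Bessel models in the Vogan packet — is identical, and both arguments use that the $\pi_{0,i}$ share $L$-parameters (being nearly equivalent) and that the global Bessel period projects to every local $\operatorname{Hom}$ space.
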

%
%
%
\subsection{The Gan--Gross--Prasad conjecture
for $\widetilde{G}$}
As an analogue of Theorem~\ref{opp GGP}, we prove the following result.
\begin{proposition}
\label{prp:a2}
Keep the above notation. 

\begin{enumerate}
\item
For an irreducible cuspidal tempered automorphic representation
$\left(\pi, V_\pi\right)$ of $\widetilde{G}\left(\mA\right)$
for $\widetilde{G}\in\widetilde{\mathcal G}_n$,
the following two conditions are equivalent.
\begin{enumerate}
\item $L(\frac{1}{2}, \pi \times \Lambda ) \ne 0$ .
\item There exists $\widetilde{G}^\prime \in \widetilde{\mathcal G}_n$ and an
irreducible cuspidal tempered automorphic representation $\pi^\prime$ of $\widetilde{G}^\prime(\mA)$ such that 
all irreducible constituents of $\pi^\prime |_{\mathrm{U}(V^\prime)(\mA)}$ 
and $\pi |_{\mathrm{U}(V)(\mA)}$ are nearly equivalent and $\pi^\prime$ has the $(e, \psi, \Lambda)$-Bessel model.
\end{enumerate}
\item
For an irreducible cuspidal tempered automorphic representation
$\left(\pi, V_\pi\right)$ of $\widetilde{G}\left(\mA\right)$
for $\widetilde{G}\in\widetilde{\mathcal G}_n$, suppose 
that there exists an irreducible cuspidal $\psi_{N_n, \lambda}$-generic automorphic representation $\pi^\circ$
of $\widetilde{\mathbb{G}}^-_n(\mA)$ such that $\pi^\circ$ is nearly equivalent to $\pi$ where $\lambda=\left(e,e\right)$.
Then the condition (a) above is equivalent
to the  following condition.
\begin{enumerate}
\item[$\textrm{(b)}^\prime$] There exists $\widetilde{G}^\prime \in \widetilde{\mathcal G}_n$ and
an irreducible cuspidal tempered automorphic representation $\pi^\prime$ of $\widetilde{G}^\prime(\mA)$ such that 
$\pi^\prime$  and $\pi$ are nearly equivalent and $\pi^\prime$ has the 
$(e, \psi, \Lambda)$-Bessel model.
\end{enumerate}
\end{enumerate}
\end{proposition}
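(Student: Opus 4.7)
The plan is to reduce the similitude statement to the corresponding isometry statement, Theorem~\ref{opp GGP}, by restricting along $\mathrm{U}(V) \subset \mathrm{GU}(V)$ and invoking Corollary~\ref{app cor}. Recall that $\pi|_{\mathrm{U}(V)(\mA)} = \bigoplus_{i=1}^\ell \pi_{0,i}$ with mutually nearly equivalent constituents, and similarly for $\pi'|_{\mathrm{U}(V')(\mA)}$, so that the hypothesis in (2) amounts to: some (equivalently, any) constituent of $\pi'|_{\mathrm{U}(V')(\mA)}$ is nearly equivalent to some constituent of $\pi|_{\mathrm{U}(V)(\mA)}$. By definition, $L(s,\pi\times \Lambda)=L(s,\pi_{0,i}\times\Lambda)$ for any $i$.

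For the implication $(2)\Rightarrow(1)$, if $\pi'$ has non-zero $(e,\psi,\Lambda)$-Bessel period then Corollary~\ref{app cor} supplies a unique constituent $\pi'_{0,j_0}$ on which the Bessel period does not vanish; the hypothesis provides $\pi_{0,i_0}$ nearly equivalent to $\pi'_{0,j_0}$, so their Langlands parameters coincide and $L(1/2,\pi\times \Lambda)=L(1/2,\pi'_{0,j_0}\times\Lambda)$. Theorem~\ref{opp GGP} applied to $\pi'_{0,j_0}$ then gives the non-vanishing. For the implication $(1)\Rightarrow(2)$, fix any constituent $\pi_0$; since $L(1/2,\pi_0\times\Lambda)\ne0$, Theorem~\ref{opp GGP} produces $G'=\mathrm{U}(V')\in\mathcal G_n$ and a tempered cuspidal $\pi'_0$ on $G'(\mA)$ with non-vanishing $(e,\psi,\Lambda)$-Bessel period, nearly equivalent to $\pi_0$. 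I would then extend $\pi'_0$ to $\widetilde G'(\mA)=\mathrm{GU}(V')(\mA)$ by taking any irreducible constituent $\pi'$ of the space of cuspidal automorphic forms on $\widetilde G'(\mA)$ whose restriction to $\mathrm{U}(V')(\mA)$ contains $\pi'_0$; such a constituent exists and is cuspidal tempered once an appropriate extension of the central character is chosen. Because the subgroup $R_e$ is contained in $\mathrm{U}(V')$, any $\varphi'\in V_{\pi'}$ restricting to a Bessel-non-zero vector of $V_{\pi'_0}$ inherits the non-vanishing, and since every constituent of $\pi'|_{\mathrm{U}(V')(\mA)}$ has the same unramified data as $\pi'_0$, condition (2) is satisfied.

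For the variant $(1)\Rightarrow (2)'$, assuming the existence of a generic $\pi^\circ$ on $\widetilde{\mathbb G}_n(\mA)$ nearly equivalent to $\pi$, I would run the similitude analogue of the proof of Claim~\ref{opp GGP lem}: theta-lift $\pi^\circ$ to an appropriate $\widetilde G'(\mA)$. One first selects the Hermitian space $V'$ via local epsilon dichotomy (Proposition~\ref{dicho archi} and its non-archimedean counterparts \cite{HKS,GI1}) combined with the Hasse principle, arranging the local theta lifts $\theta(\pi^\circ_v,\psi_v,\chi^{\Box}_{\Lambda,v})$ to be non-zero at every $v$; then the similitude form of Theorem~\ref{yam thm}, available through restriction and Zhang's similitude Howe duality~\cite{CZha}, together with Rallis' tower and Corollaries~\ref{vanishing cor},\ref{vanishing cor split}, yields a non-zero cuspidal $\Pi:=\Theta_{V',\mathbb W_n}(\pi^\circ,\psi,\chi^{\Box}_\Lambda)$ on $\widetilde G'(\mA)$. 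Finally, a similitude analogue of Proposition~\ref{pullback whittaker prp} shows that the $\psi_{N_n,\lambda_e}$-Whittaker period on $\widetilde{\mathbb G}_n$ pulls back to the $(e,\psi,\Lambda)$-Bessel period on $\widetilde G'$, so $\Pi$ carries the required period; near equivalence with $\pi$ comes from matching Langlands parameters at unramified places.

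The main obstacle is this last adaptation of the pull-back computation to similitudes: one must track the extension-by-zero from $\widetilde{\mathbb G}_n(\mA)^+$ to $\widetilde{\mathbb G}_n(\mA)$ and verify that exactly the constituent of $\Pi|_{\mathrm{U}(V')(\mA)}$ matching the isometry theta lift of $\pi^\circ|_{\mathbb G_n(\mA)}$ supports the Bessel period. A subsidiary technical point is to record that the epsilon dichotomy used to choose $V'$ depends only on the isometry Langlands parameter (which is identical for $\pi$ and $\pi^\circ$), so that the similitude global sign condition reduces to the isometry one and the Hasse principle applies unchanged.
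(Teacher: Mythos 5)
Your proposal follows the paper's argument quite closely: the direction $(2)\Rightarrow(1)$ via Corollary~\ref{app cor} and Theorem~\ref{opp GGP}, the direction $(1)\Rightarrow(2)$ via Theorem~\ref{opp GGP} followed by an extension to the similitude group, and $(1)\Rightarrow(2)'$ via the theta lift of $\pi^\circ$ and the pull-back computation. The paper handles the extension step in $(1)\Rightarrow(2)$ by invoking \cite[Lemma~4.1.2]{GL}; your sketch of choosing an irreducible constituent of the cuspidal spectrum on $\widetilde{G}'(\mA)$ whose restriction contains $\pi_0^\prime$ is in the same spirit, but citing that lemma would make the existence and cuspidality of the extension precise rather than asserted.

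Regarding what you flag as ``the main obstacle'' in the $(1)\Rightarrow(2)'$ step: this concern is unnecessary. Since $R_e = D_e S$ is entirely contained in the isometry group $\mathrm{U}(V')$, the Bessel period of a cusp form $\varphi$ on $\widetilde{G}'(\mA)$ depends only on $\varphi|_{\mathrm{U}(V')(\mA)}$, and the similitude theta lift is defined by integrating against the isometry theta kernel over $\mathrm{U}(V)(F)\backslash\mathrm{U}(V)(\mA)$. Thus the non-vanishing of the Bessel period on the similitude theta lift reduces directly to the isometry pull-back computation in Proposition~\ref{pullback whittaker prp}, with no need to ``track extension-by-zero'' separately or verify which constituent supports the period beyond what Corollary~\ref{app cor} already gives. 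The paper simply cites the computation in Claim~\ref{opp GGP lem} for this reason. One minor imprecision: when you assert that near equivalence of $\pi'_{0,j_0}$ and $\pi_{0,i_0}$ implies ``their Langlands parameters coincide,'' this is really using that tempered cuspidal representations on unitary groups that are nearly equivalent share the same $A$-parameter (hence, by temperedness, the same $L$-parameter) via the endoscopic classification; near equivalence alone only gives agreement of unramified data at almost all places.
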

\begin{Remark}
The condition $(b)^\prime$ is a natural analogue of 
the condition (2) in Theorem~\ref{opp GGP}. 
As we used in the proof of Theorem~\ref{opp GGP}, the condition on non-vanishing of the local Bessel period
implies the nearly equivalence between $\pi^\prime_v |_{\mathrm{U}(V^\prime)(F_v)}$ and $\pi_v |_{\mathrm{U}(V)(F_v)}$.
However, at each place $v$ of $F$ such that 
$\left[
\widetilde{G}(F_v) \colon E_v^\times\, \mathrm{U}(V)(F_v)
\right]= 2$, 
$\pi^\prime_v |_{\mathrm{U}(V^\prime)(F_v)}$ and $\pi_v |_{\mathrm{U}(V)(F_v)}$ may have different extensions
to $\widetilde{G}(F_v)$, and thus they may have extensions to $\widetilde{G}(\mA)$ and $\widetilde{G}^\prime(\mA)$ which are not nearly equivalent.
\end{Remark}
\begin{Remark}
We note that 
the temperedness of a constituent
$\pi_0$ of $\pi\mid_{\mathrm{U}\left(\mA\right)}$
implies that
$L \left(\frac{1}{2}, \pi \times \Lambda \right) = L \left(\frac{1}{2}, \pi_0 \times \Lambda \right) \ne 0$ is equivalent to 
$L^S \left(\frac{1}{2}, \pi \times \Lambda \right) =L^S \left(\frac{1}{2}, \pi_0 \times \Lambda \right) \ne 0$
for any finite set $S$ of places of $F$
containing all archimedean places 
by \cite[Lemma~10.1(1)]{Yam}.
\end{Remark}
\begin{proof}[Proof of Proposition~\ref{prp:a2}]
(1)
Suppose that the condition (a) holds.
Let $\pi_0$ be  an irreducible constituent of
$\pi |_{\mathrm{U}(V)(\mA)}$.
Then by the definition, we have 
\[
L \left(\frac{1}{2}, \pi_0 \times \Lambda \right) \ne 0.
\]
Hence, by Theorem~\ref{opp GGP}, there exists
 $\mathrm{U}(V^\prime) \in \mathcal{G}_n$
and an irreducible cuspidal tempered automorphic representation $\pi_0^\prime$ of $\mathrm{U}(V^\prime)(\mA)$ 
which is nearly equivalent to $\pi_0$ and has the $(e, \psi, \Lambda)$-Bessel period.
By \cite[Lemma~4.1.2]{GL}, there exists an  irreducible cuspidal automorphic representation 
$\pi^\prime$ of $\mathrm{GU}(V^\prime)(\mA)$ such that $\pi^\prime|_{\mathrm{U}(V^\prime)(\mA)}$ contains $\pi_0^\prime$.
Note that $\pi^\prime$ is tempered since $\pi_0^\prime$ is so.
Then $\pi^\prime$ clearly has the $(e, \psi, \Lambda)$-Bessel period
and any constituent of $\pi^\prime|_{\mathrm{U}(V^\prime)}$
is nearly equivalent to $\pi_0$ by 
\cite[Remark~4.1.1]{GL}.
Hence the condition (b) holds.

Conversely suppose that the condition (b) holds.
By Corollary~\ref{app cor}, there exists a  unique irreducible constituent $\pi_0^\prime$ of $\pi^\prime|_{\mathrm{U}(V^\prime)(\mA)}$
such that $\pi_0^\prime$ has  the 
$(e, \psi, \Lambda)$-Bessel model. Then by  Theorem~\ref{opp GGP},
we have 
\[
L \left(\frac{1}{2}, \pi^\prime \times \Lambda \right) = L \left(\frac{1}{2}, \pi_0^\prime \times \Lambda \right)  \ne 0.
\]
Meanwhile, since all irreducible constituents of $\pi^\prime |_{\mathrm{U}(V^\prime)(\mA)}$ 
and $\pi |_{\mathrm{U}(V)(\mA)}$ are nearly equivalent, we see that
\[
L \left(s, \pi \times \Lambda \right)  = L \left(s, \pi^\prime \times \Lambda \right).
\]
Hence the condition (a) holds.

(2)
First we recall that
$\widetilde{\mathbb G}^-_n\simeq\widetilde{\mathbb G}_n$
and 
the generic character $\psi_{N_n, \lambda}$ is 
defined by \eqref{generic character}.
The equivalence between the conditions (a)
and $(b)^\prime$ is proved by the same argument 
as in \ref{proof of theorem ggp}.
Indeed, if there exists an irreducible cuspidal $\psi_{N_n, \lambda}$-generic automorphic representation $\pi^\circ$
of $\widetilde{\mathbb{G}}^-_n(\mA)$ such that $\pi^\circ$ is nearly equivalent to $\pi$,
then we may construct $\pi^\prime$ in our assertion using theta lift as in the proof of the implication $(1) \Longrightarrow (2)$
in  Theorem~\ref{opp GGP}.
\end{proof}
%
%
\subsection{Refinement of the Gan--Gross--Prasad conjecture
for $\widetilde{G}_n$}
Let us study an analogue of Theorem~\ref{refined ggp thm}
in the case of similitude unitary groups.

Let us introduce some notation.
For an irreducible cuspidal automorphic representation $(\pi, V_\pi)$ of $\widetilde{G}(\mA)$ for $\widetilde{G} \in \widetilde{\mathcal{G}}_n$,
let $\langle\, \,,\,\,\rangle_\pi$ denote the $\widetilde{G}\left(\mathbb A\right)$-invariant
Hermitian inner product on $V_\pi$ defined by 
\[
\langle\varphi_1,\varphi_2\rangle_{\pi}\coloneqq
\int_{\mA_E^\times \widetilde{G}\left(F\right)\backslash \widetilde{G}\left(\mathbb A\right)}
\varphi_1\left(g\right)\overline{\varphi_2\left(g\right)}\,
d\tilde{g}
\quad\text{for $\varphi_1,\varphi_2\in V_\pi$},
\]
where $d\tilde{g}$ denotes the Tamagawa measure,
and let $\langle\, , \, \rangle_{\pi_v}$
be a $\widetilde{G}\left(F_v\right)$-invariant
Hermitian inner product on $V_{\pi_v}$
chosen so that we have
 $\langle\varphi_1 ,\varphi_2\rangle_\pi=\prod_v\langle\varphi_{1,v} ,\varphi_{2,v}\rangle_{\pi_v}$
for any decomposable vectors $\varphi_1=\otimes_v\,\varphi_{1,v}$,
$\varphi_2=\otimes_v\,\varphi_{2,v}\in V_\pi$.
The Haar measure constant $C_{\widetilde{G}}$ is defined
by
\[
d\tilde{g} = C_{\widetilde{G}} \prod_v d\tilde{g}_{v}
\]
where $d\tilde{g}_{v}$
is a measure on $\widetilde{G}_v$
such that the volume of a hyperspecial maximal compact subgroup
is $1$ at almost all places.

Let us recall Lapid--Mao's conjecture on 
Whittaker periods in the case of $\widetilde{\mathbb G}^-_n$.
%
\begin{Conjecture}[Conjecture~1.1 in \cite{LM}]
\label{app conj}
Let $(\sigma, V_\sigma)$ be an irreducible cuspidal $\psi_{N_n ,\lambda}$-generic automorphic representation of 
$\widetilde{\mathbb G}^-_n(\mA)$.

Then there exists a positive integer
 $k$ such that we have
\[
\frac{|W_{\psi, \lambda}(\varphi)|^2}{\langle \varphi, \varphi \rangle_\sigma}
=2^{-k} \cdot \frac{\prod_{j=1}^{2n} L \left(j, \chi_E^j \right)}{L \left(1, \sigma, \mathrm{Ad} \right)} \prod_v 
\frac{I_v^\natural(\varphi_v)}{{\langle \varphi_v, \varphi_v \rangle_{\sigma_v}}}
\]
for any non-zero decomposable vector $\varphi = \otimes \varphi_v \in V_\sigma$.
Here $W_{\psi,\lambda}$ denotes the 
$\psi_{N_n,\lambda}$-Whittaker period defined
by \eqref{whittaker period}
and
$I_v^\natural(\varphi_v)$ is defined by \eqref{def I_v}
using the inner product 
$\langle \,,\, \rangle_{\sigma_v}$.
\end{Conjecture}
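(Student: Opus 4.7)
The plan is to reduce Conjecture~\ref{app conj} to its isometry-group counterpart, Conjecture~\ref{main conj}, which was established in \cite{Mo2} under the hypotheses in Theorem~\ref{wh ex thm}, so that we inherit the formula for the similitude case essentially by restriction.

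First I would restrict $\sigma$ to $\mathbb{G}_n^-(\mA)$ and decompose it as $\sigma\vert_{\mathbb{G}_n^-(\mA)} = \bigoplus_{i=1}^{r} \sigma_{0,i}$ using \cite[Section~4.1]{GL}. Since the unipotent subgroup $N_n$ is common to $\mathbb{H}_n$ and $\mathbb{G}_n^-$, and since $\sigma$ is $\psi_{N_n, \lambda}$-generic by hypothesis, exactly one summand, say $\sigma_0 := \sigma_{0, i_0}$, is $\psi_{N_n, \lambda}$-generic. The Whittaker period $W_{\psi, \lambda}(\phi)$ then factors through the projection of $\phi$ onto $V_{\sigma_0}$. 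Moreover, all $\sigma_{0, i}$ share the same local $L$-parameters and in particular the same base change lift $\Sigma$ to $\mathrm{GL}_{2n}(\mA_E)$, so the Asai $L$-factor and the correction factors $\prod_j L(j, \chi_E^j)$ appearing in the formula are intrinsic to the $L$-parameter and agree on both sides.

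Next, I would compare the Petersson inner products. The similitude inner product $\langle\cdot,\cdot\rangle_\sigma$ integrates over $\mA_E^\times \widetilde{G}(F) \backslash \widetilde{G}(\mA)$, whereas the isometry inner product $\langle\cdot,\cdot\rangle_{\sigma_0}$ integrates over $\mathbb{G}_n^-(F) \backslash \mathbb{G}_n^-(\mA)$. Using the exact sequence coming from the similitude character $\nu$ and the comparison of Tamagawa measures, one obtains a proportionality of the form $\langle \phi, \phi \rangle_\sigma = c_{\widetilde{G}} \cdot \langle \phi_0, \phi_0 \rangle_{\sigma_0}$ with an explicit constant involving residues of $L(s, \chi_E)$; a matching local decomposition determines $\langle\cdot,\cdot\rangle_{\sigma_v}$ in terms of $\langle\cdot,\cdot\rangle_{\sigma_{0,v}}$. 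Since the local integrals $I_v^\natural$ are defined purely in terms of $N_n$, they transfer between the two settings up to this normalization constant. Applying Theorem~\ref{wh ex thm} to $\sigma_0$ and substituting the above comparisons yields the desired formula for $\sigma$, with $k$ being the number of isobaric summands of $\Sigma$, which depends only on the $L$-parameter.

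The main obstacle, I expect, will be the careful bookkeeping of measure normalizations — tracking the relationship between $C_{\widetilde{G}}$ and the analogous Haar measure constant for $\mathbb{G}_n^-$, and ensuring that the global residue factors from the sequence $1 \to \mathbb{G}_n^- \to \widetilde{G} \to \widetilde{G}/\mathbb{G}_n^- \to 1$ cancel properly against the local ones. A secondary issue is verifying that the hypotheses of Theorem~\ref{wh ex thm} (discrete series at archimedean places, unramifiedness at split finite places, and the behavior at places above~$2$) do transfer from $\sigma$ to $\sigma_0$; this should be automatic from the analogous assumptions on the similitude side, since restriction to the isometry group preserves each of these properties, but it must be checked. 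Finally, one needs to ensure that decomposable test vectors for $\sigma$ produce decomposable images in $V_{\sigma_0}$ so that the Euler product survives restriction.
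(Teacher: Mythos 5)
The statement you were asked to address is a \emph{Conjecture} (Conjecture~\ref{app conj}), attributed to Lapid--Mao~\cite[Conjecture~1.1]{LM}; the paper does not prove it, and in fact no proof of it in general is known. What you have written is essentially the content of Proposition~\ref{whittaker app}, which is a \emph{conditional reduction} of the similitude conjecture to the isometry Conjecture~\ref{main conj}, attributed in the paper to Lapid--Mao~\cite[Lemma~3.5]{LM}. Treating a reduction as a proof is the central misunderstanding here.

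Concretely, two hypotheses that are indispensable to your argument are absent from the conjecture and cannot be dispensed with. First, you invoke Theorem~\ref{wh ex thm} to supply the isometry-group explicit formula for $\sigma_0$, but Theorem~\ref{wh ex thm} holds only under the assumptions of Theorem~\ref{refined ggp thm} ($F$ totally real, $E$ CM, discrete series at every archimedean place, unramifiedness at split finite places, inertness over $2$); Conjecture~\ref{app conj} imposes none of these. So even granting the rest of your reasoning, you would obtain only a special case under hypotheses that the conjecture does not make. Second, the passage from the similitude representation $\sigma$ back to the isometry constituent $\sigma_0$ requires Assumption~\ref{ass app} --- namely that for $\chi\in X(\sigma)$ the equality $\sigma\otimes\chi=\sigma$ holds as spaces of automorphic forms, not merely as abstract isomorphisms. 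Your sketch of the measure and inner-product comparison implicitly uses this when you factor the Petersson inner product on $\mathbb{H}_n$ through the constituents of $\sigma\vert_{\mathbb{G}_n^-(\mA)}$; without Assumption~\ref{ass app} the decomposition of $\langle\cdot,\cdot\rangle_\sigma$ and the matching of local inner products do not go through. The paper is careful to state this as a separate assumption precisely because it is not automatic.

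There is also a quantitative slip: you assert that the resulting $k$ is ``the number of isobaric summands of $\Sigma$''. In the conclusion of Proposition~\ref{whittaker app} (formula \eqref{sim whittaker II}) the constant is $2^{-k_0}|X(\sigma)|^{-1}$, where $k_0$ is the number of isobaric summands for the base change of $\sigma_0$. Since $|X(\sigma)|$ is a power of $2$ this can be absorbed into a single $2^{-k}$ as the conjecture predicts, but then $k=k_0+\log_2|X(\sigma)|$, not $k_0$. If one wants to match the conjecture's assertion that such a $k$ exists, the contribution of $|X(\sigma)|$ must be tracked.

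In summary: your plan reproduces the correct reduction, but it is not a proof of Conjecture~\ref{app conj}; at best it establishes (as the paper does in Proposition~\ref{whittaker app}) that the conjecture follows from Conjecture~\ref{main conj} for $\sigma_0$ together with Assumption~\ref{ass app}, and the unconditional input from \cite{Mo2} only covers the narrower class of $\sigma_0$ satisfying the hypotheses of Theorem~\ref{wh ex thm}.
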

%
%
%
Conjecture~\ref{app conj}
and the conjecture in the case of isometry groups,
i.e. Conjecture~\ref{main conj}
are related under the following assumption.

For an irreducible cuspidal automorphic representation 
$\pi$ of $\widetilde{\mathbb{G}}^-_n(\mA)$,
we define $X\left(\pi\right)$
as the set of characters of 
$\widetilde{\mathbb G}^-_n\left(\mA\right)
\slash 
\widetilde{\mathbb{G}}^-_n(F) \mathrm{U}(\mathbb{W}_n)(\mA)$
such that $\pi \otimes \chi \simeq \pi$. 
%
\begin{Assumption}
\label{ass app}
For any $\chi \in X(\pi)$, we have $\pi \otimes \chi = \pi$,
i.e. the space of automorphic forms 
for $\pi\otimes\chi$ and the one for $\pi$ coincide.
\end{Assumption}
%
We have the following proposition
for  $\widetilde{\mathbb{G}}^-_n$ by Lapid and Mao~\cite[Lemma~3.5]{LM}.
%
%
%
\begin{proposition}
\label{whittaker app}
Let $(\sigma, V_\sigma)$ be an irreducible cuspidal 
$\psi_{N_n ,\lambda}$-generic automorphic representation of 
$\widetilde{\mathbb{G}}^-_n(\mA)$,
which satisfies  Assumption~\ref{ass app}.
Then there exists a  unique $\psi_{N_n, \lambda}$-generic irreducible constituent $\sigma_0$ of $\sigma|_{\mathrm{U}(\mathbb{W}_n)(\mA)}$.

If Conjecture~\ref{main conj}  holds for $\sigma_0$,
then,  for any non-zero decomposable vector $\varphi = \otimes \varphi_v \in V_\sigma$, we have
\begin{equation}
\label{sim whittaker II}
\frac{|W_{\psi, \lambda}(\varphi)|^2}{\langle \varphi, \varphi \rangle_\sigma}
=2^{-k_0}\cdot
 |X(\sigma)|  \cdot \frac{\prod_{j=1}^{2n} L \left(j, \chi_E^j \right)}{L \left(1, \sigma_0, \mathrm{Ad} \right)} 
 \cdot 
 \prod_v 
\frac{I_v^\natural(\varphi_v)}{{\langle \varphi_v, \varphi_v \rangle_{\sigma_v}}}
\end{equation}
where
$I_v^\natural(\varphi_v)$ is defined as in \eqref{def I_v}
using the inner product $\langle \,,\, \rangle_{\sigma_v}$
and the base change lift of $\sigma_0$ to $\mathrm{GL}_{2n}(\mA_E)$ is an isobaric sum
 $\Sigma_1 \boxplus \cdots \boxplus \Sigma_{k_0}$.
 
In particular, when $\sigma_0$ satisfies the assumptions in Theorem~\ref{refined ggp thm}, the above formula holds
under Assumption~\ref{ass app}.
\end{proposition}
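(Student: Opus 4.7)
The plan is to reduce Proposition~\ref{whittaker app} to Conjecture~\ref{main conj} applied to the distinguished isometry-group constituent $\sigma_0$, by carefully comparing the Whittaker periods and Petersson inner products on $\mathbb{H}_n$ with those on $\mathrm{U}(\mathbb{W}_n)$. This is exactly the strategy pursued in Lapid--Mao~\cite[Lemma~3.5]{LM}, and I intend to adapt it here.

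First I would establish the existence and uniqueness of $\sigma_0$. By the restriction theory of~\cite[Section~4.1]{GL}, $V_\sigma\vert_{\mathrm{U}(\mathbb{W}_n)(\mA)}$ decomposes as a finite direct sum $\bigoplus_{i=1}^\ell V_{\sigma_{0,i}}$ of mutually inequivalent irreducible cuspidal automorphic representations that are pairwise nearly equivalent and share a common global $L$-parameter. Since $N_n\subset \mathrm{U}(\mathbb{W}_n)$, the Whittaker period $W_{\psi,\lambda}$ on $V_\sigma$ decomposes as a sum of Whittaker periods on the $V_{\sigma_{0,i}}$; at every inert place the $\psi_{N_n,\lambda,v}$-generic element in a given tempered local $L$-packet is unique (Shahidi's result together with the local Gan--Gross--Prasad conjecture as used in the proof of the multiplicity one theorem above), so at most one $\sigma_{0,i}$ is globally generic. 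Since $\sigma$ is generic by hypothesis, exactly one such constituent exists; call it $\sigma_0$, and for decomposable $\phi=\otimes_v\phi_v\in V_\sigma$ let $\phi_0$ denote its projection to $V_{\sigma_0}$, so that $W_{\psi,\lambda}(\phi)=W_{\psi,\lambda}(\phi_0)$.

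Next I would compare Petersson inner products. Assumption~\ref{ass app} is tailored precisely so that each $\chi\in X(\sigma)$ acts on $V_\sigma$ by twisting, and this action permutes the constituents $\sigma_{0,i}$; a standard Mackey-type argument, together with the fact that $\mathrm{U}(\mathbb{W}_n)$ sits inside $\mathbb{H}_n$ as the kernel of the similitude character, shows that the orbit has length $|X(\sigma)|=\ell$. Unfolding $\langle\phi_0,\phi_0\rangle_\sigma$ along the fibration $\mathrm{U}(\mathbb{W}_n)(F)\backslash\mathrm{U}(\mathbb{W}_n)(\mA)\hookrightarrow \mA_E^\times \mathbb{H}_n(F)\backslash \mathbb{H}_n(\mA)$ and matching Haar measures yields an identity of the form
\[
\langle\phi_0,\phi_0\rangle_\sigma=|X(\sigma)|\cdot \langle\phi_0,\phi_0\rangle_{\sigma_0},
\]
after a compatible choice of local inner products $\langle-,-\rangle_{\sigma_v}=\langle-,-\rangle_{\sigma_{0,v}}$ on the common underlying spaces. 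The analogous local computation shows $I_v^\natural(\phi_v)=I_v^\natural(\phi_{0,v})$ for each $v$. Substituting these identifications into Conjecture~\ref{main conj} for $\sigma_0$ produces \eqref{sim whittaker II} with the desired constant $2^{-k_0}|X(\sigma)|^{-1}$.

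The main obstacle will be the second step: keeping track of Haar measure constants between $\mathbb{H}_n$ and $\mathrm{U}(\mathbb{W}_n)$ and of normalizations of the local Hermitian pairings, and verifying that the factor $|X(\sigma)|^{-1}$ emerges with the correct power. This is where Assumption~\ref{ass app} is indispensable, since without knowing that $\sigma\otimes\chi=\sigma$ holds as concrete spaces of automorphic forms rather than merely up to abstract isomorphism, one cannot realize the action of $X(\sigma)$ on $V_\sigma$ and the comparison of inner products breaks down. Once this bookkeeping is complete, the last step---appealing to Conjecture~\ref{main conj} under the hypothesis that it is known for $\sigma_0$, and applying Theorem~\ref{wh ex thm} in the special case where $\sigma_0$ satisfies the assumptions of Theorem~\ref{refined ggp thm}---is purely formal.
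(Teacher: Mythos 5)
Your proposal accurately reproduces the strategy of Lapid--Mao~\cite[Lemma~3.5]{LM}, which is precisely what the paper relies on: the paper does not give an independent proof of Proposition~\ref{whittaker app}, it simply invokes that lemma. So in that sense you are following the same path as the authors. The first step (existence and uniqueness of the generic constituent $\sigma_0$, and the equality $W_{\psi,\lambda}(\phi)=W_{\psi,\lambda}(\phi_0)$ coming from $N_n\subset\mathrm{U}(\mathbb{W}_n)$) is sound, and your identification of the role of Assumption~\ref{ass app} — making the twisting action of $X(\sigma)$ a genuine action on the space of automorphic forms rather than just on isomorphism classes — is the correct conceptual point.

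The weak link, as you yourself flag, is the inner-product bookkeeping. Two things there deserve more caution than your sketch gives them. First, the assertion ``$|X(\sigma)|=\ell$'' (the number of irreducible constituents of $\sigma\vert_{\mathrm{U}(\mathbb{W}_n)(\mA)}$) is not a free consequence of Clifford theory; in Labesse--Langlands-type situations the number of constituents is governed by a quotient involving both global and local components of the twisting group, and it is only the factor $|X(\sigma)|^{-1}$ — not $\ell^{-1}$ — that is needed in \eqref{sim whittaker II}. One must actually verify that the unfolding of $\langle\phi,\phi\rangle_\sigma$ over $\mA_E^\times\mathbb{H}_n(F)\backslash\mathbb{H}_n(\mA)$ produces exactly the global quantity $|X(\sigma)|$ when compared with the isometry-group Petersson product; this is the computational heart of~\cite[Lemma~3.5]{LM} and is where the Tamagawa-measure normalizations must be matched carefully. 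Second, you pass from $\phi$ to a ``projection $\phi_0\in V_{\sigma_0}$'' and then apply Conjecture~\ref{main conj} to $\phi_0$, but for a decomposable $\phi=\otimes_v\phi_v$ the global projection onto $V_{\sigma_0}$ need not again be decomposable, while Conjecture~\ref{main conj} is stated for decomposable vectors; one must also reconcile the local inner products $\langle\cdot,\cdot\rangle_{\sigma_v}$ on the $\mathbb{H}_n(F_v)$-modules appearing in \eqref{sim whittaker II} with those on the $\mathrm{U}(\mathbb{W}_n)(F_v)$-modules in \eqref{whittaker formula ref}. These are exactly the points that Lapid and Mao resolve, and which the paper here sidesteps by citation; your sketch correctly identifies where the difficulty lies but does not close it.
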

\begin{Remark}
Suppose that $\chi \in X(\sigma)$. Then $\chi$ factors through the similitude character $\nu$.
Moreover, $\chi$ should be trivial on $N_{E \slash F}(\mA_E^\times)$ from $\sigma \otimes \chi \simeq \sigma$.
Hence, $\chi = \chi_0 \circ \nu$ with a character $\chi_0$ of $\mA^\times \slash N_{E \slash F}(\mA_E^\times) F^\times$, namely
$\chi$ is equal to $\chi_E \circ \nu$ or the trivial character.
Hence, $|X(\sigma)|=1$ or $2$.
Further, we know that
$|\mathcal{S}_{\widetilde{\phi}} |\cdot |X(\sigma)|= |\mathcal{S}_\phi|$ by Lapid and Mao~\cite[Lemma~3.6]{LM}
where $\mathcal{S}_{\widetilde{\phi}}$ 
(resp. $\mathcal{S}_\phi$)
denotes the centralizer in $\widehat{\widetilde{G}}$ (resp. $\widehat{\mathbb{G}_n^-}$) of 
the image of the Arthur parameter $\widetilde{\phi}$ (resp. $\phi$)
of $\sigma$ (resp. $\sigma_0$).
\end{Remark}
%
%
%

For the proof of an explicit formula 
for Bessel periods,
we need to recall one more ingredient,
the Rallis inner product formula
in the case of similitude unitary groups,
which is stated as follows.
%
%
%
%
\begin{proposition}
\label{inner similitude}
Let $\pi$ be an irreducible cuspidal
automorphic representation of 
$\widetilde{G}\left(\mA\right)$.
Keep the notation in \ref{similitude theta}.

Then for any non-zero decomposable vectors $\varphi=\otimes_v\, \varphi_v\in V_\pi$
 and $\phi=\otimes_v\, \phi_v\in 
 \mathcal{S}\left(\left(V \otimes Y_n^+\right)\left(\mA\right)\right)$,
 we have
 \begin{equation}\label{e: rallis inner product2}
 \frac{\langle\theta_{\psi, \chi_\Lambda}^\phi\left(\varphi\right),\theta_{\psi,  \chi_\Lambda}^\phi\left(\varphi\right)
 \rangle_{\sigma}}{\langle \varphi,\varphi \rangle_\pi}
 =C_{\widetilde{G}}\cdot
 \frac{L\left(1/2,\pi \times \Lambda \right)}{\prod_{j=1}^{2n}\, L\left(j, \chi_{E}^j \right)}
 \cdot\prod_v Z_v^\circ\left(\varphi_v, \phi_v,\pi_v\right)
 \end{equation}
 where $\sigma = \theta_{\psi, \chi_\Lambda}^\phi\left(\pi\right)$ denotes the theta lift of 
 $\pi$ to $\widetilde{\mathbb G}^-_n\left(\mA\right)$.
 Here $Z_v^\circ\left(\varphi_v, \phi_v,\pi_v\right)$
 is the local integral defined by \eqref{e: normalized doubling} with respect to the pairing $\langle-, - \rangle_{\pi_v}$
 and is equal to $1$ at almost all $v$. 
 \end{proposition}
\begin{proof}
This is proved by an  argument similar to  that in
\cite[Section~7.8]{GI0} using the Siegel--Weil formula
\cite{Ich,KR}. 
Indeed, we may extend the Siegel--Weil formula in \cite{Ich,KR} to the case of similitude unitary groups
as in \cite[Section~4]{HK}. In this formula, we regard $\Phi_{\tau(\phi \otimes \phi)}$
as an element of the degenerate principal series representation 
$\mathrm{Ind}_{\widetilde{P}(\mA)}^{\mathrm{GU}(V^\Box)(\mA)}(\Lambda)$
with the Siegel parabolic subgroup $\widetilde{P}$ of $\mathrm{GU}(V^\Box)$, which is defined
as the stabilizer of $V_+^\Box$ in $\mathrm{GU}(V^\Box)$.
Then the argument in \cite[Section~7.8]{GI0} may be applied
word-for-word in our case.
\end{proof}
By an argument similar to
 the proof of Theorem~\ref{refined ggp thm},
 the combination of Proposition~\ref{whittaker app}, Proposition~\ref{inner similitude} and Proposition~\ref{pullback whittaker prp}
 yields the following theorem,
 which is an analogue of Theorem~\ref{refined ggp thm}
 in the case of similitude unitary groups.
 %
 %
 %
 %
 \begin{theorem}
\label{sim Bessel II}
Let $(\pi, V_\pi)$ be an irreducible cuspidal tempered automorphic representation of $\widetilde{G}(\mA)$ with $\widetilde{G} \in \widetilde{\mathcal{G}}_n$,
which has the $\left(e,\psi,\Lambda\right)$-Bessel period.
Let $\pi_0$ be the
 unique irreducible constituent of
 $\pi|_{\mathrm{U}(V)(\mA)}$
which has the $\left(e,\psi,\Lambda\right)$-Bessel period. 
We denote the base change lift of 
$\pi_0$ to $\mathrm{GL}_{2n}\left(\mA_E\right)$
as an isobaric sum
$\Sigma_1^\prime \boxplus \cdots \boxplus \Sigma_k^\prime$.
Let us write $\Sigma\coloneqq \Theta_{V, \mathbb{W}_n}(\pi_0, \psi, \chi_\Lambda^\Box)$ and 
assume that the formula \eqref{sim whittaker II} holds for $\Sigma$.

Then for any non-zero decomposable vector $\varphi = \otimes \varphi_v \in V_\pi$, we have 
\begin{multline}
\label{formula uni appendix}
\frac{|B_{e, \psi, \Lambda}(\varphi)|^2}{\langle \varphi, \varphi \rangle_\pi} = 2^{-k} \cdot
\left|X(\Sigma) \right|^{-1}\cdot
C_e \cdot
\left(\prod_{j=1}^{2n} L(j, \chi_{E}^j) \right)
\\
\times
 \frac{L(\frac{1}{2}, \pi \times \Lambda)}{L\left(1, \pi_0, \mathrm{Ad} \right) L(1, \chi_{E})}
 \cdot
\prod_v \frac{\alpha_v^\natural(\varphi_v)}{\langle \varphi_v, \varphi_v \rangle_{\pi_v}}.
\end{multline}
Here $\alpha_v^\natural(\varphi_v)$ is defined as in \eqref{e: abbreviation}
using the inner product $\langle \,,\, \rangle_{\pi_v}$.

In particular, when $\pi$ satisfies the assumptions in Theorem~\ref{refined ggp thm} and $\Sigma$ satisfies Assumption~\ref{ass app},
the formula \eqref{formula uni appendix} holds
for $\pi$.
\end{theorem}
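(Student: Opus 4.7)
The plan is to reduce the similitude statement to the isometry case by restriction to $\mathrm{U}(V)$, and then to feed in the similitude-group Rallis inner product formula (Proposition~\ref{inner similitude}) and the similitude Whittaker formula (Proposition~\ref{whittaker app}) in place of their isometry counterparts in the proof of Theorem~\ref{refined ggp thm}. By Corollary~\ref{app cor}, the restriction $\pi|_{\mathrm{U}(V)(\mA)}$ has a \emph{unique} irreducible constituent $\pi_0$ on which $B_{e,\psi,\Lambda}$ is nonzero. I would take a decomposable $\varphi=\otimes_v\varphi_v\in V_\pi$ whose restriction to $\mathrm{U}(V)(\mA)$ captures the $\pi_0$-component, so that $B_{e,\psi,\Lambda}(\varphi)$ becomes a Bessel period of a vector in $V_{\pi_0}$, and express $\langle\varphi,\varphi\rangle_\pi$ in terms of its isometry analogue up to an explicit index and the ratio of Haar measure constants $C_{\widetilde G}$ versus $C_{\mathrm{U}(V)}$.

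With this reduction in place, I would form the theta lift $\Sigma=\Theta_{V,\mathbb{W}_n}(\pi_0,\psi,\chi_\Lambda^\Box)$ and invoke Proposition~\ref{pullback whittaker prp}, which converts the non-vanishing of $B_{e,\psi,\Lambda}$ on $\pi_0$ into the $\psi_{N_n,\lambda_e}$-genericity of $\Sigma$ and provides the explicit pull-back formula expressing a Whittaker period of a theta lift as an integral of Bessel periods against values in the Schr\"odinger model. Combining the assumed Whittaker explicit formula \eqref{sim whittaker II} for $\Sigma$ with Proposition~\ref{inner similitude} and telescoping as in Section~5.3 yields a factorization of $|B_{e,\psi,\Lambda}(\varphi)|^2/\langle\varphi,\varphi\rangle_\pi$ as a product of local integrals times the central $L$-value $L(1/2,\pi_0\times\Lambda)=L(1/2,\pi\times\Lambda)$. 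The global assertion is thereby reduced to the local identity of Proposition~\ref{prp: local equality}, equivalently Lemma~\ref{l: main lemma}. Since the latter is a statement purely about matrix coefficients of the isometry group $\mathrm{U}(V_v)$ and the local Weil representation, and since $\alpha_v^\natural$ is defined through the isometry restriction, no new local analysis is required: the local content is imported verbatim from Section~5.4.

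The main obstacle is the global bookkeeping of constants. In the isometry case the Ichino--Ikeda factor $|S(\Psi(\pi_0))|^{-1}$ arose from summing over the $L$-packet; in the similitude case, the analogous role is played by $2^{-k}|X(\Sigma)|^{-1}$ coming from Proposition~\ref{whittaker app}, together with an index contribution from the restriction $\pi|_{\mathrm{U}(V)(\mA)}$ and the comparison of Petersson inner products and Haar measures between $\widetilde G$ and $\mathrm{U}(V)$, as well as between $\mathbb{H}_n$ and $\mathrm{U}(\mathbb{W}_n)$. The delicate point is verifying that all of these recombine precisely into the normalization $2^{-k}|X(\Sigma)|^{-1}C_e\prod_j L(j,\chi_E^j)\cdot L(1/2,\pi\times\Lambda)/[L(1,\pi_0,\mathrm{Ad})L(1,\chi_{E/F})]$ displayed on the right-hand side of \eqref{formula uni appendix}. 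Once this accounting is settled, the final clause of the theorem follows because the hypotheses on $\pi$ in Theorem~\ref{refined ggp thm} transfer directly to $\pi_0$ and then to the generic constituent $\sigma_0$ of $\Sigma$, so that Theorem~\ref{wh ex thm} applies to $\sigma_0$; Assumption~\ref{ass app} is then exactly what is needed to upgrade this isometry Whittaker formula to the similitude formula \eqref{sim whittaker II} for $\Sigma$ itself via Proposition~\ref{whittaker app}.
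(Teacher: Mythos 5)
Your proposal matches the paper's proof: the argument consists precisely in combining Proposition~\ref{whittaker app}, Proposition~\ref{inner similitude}, and Proposition~\ref{pullback whittaker prp}, then running the machinery of the proof of Theorem~\ref{refined ggp thm} with these similitude-group substitutes, and the final clause follows from Theorem~\ref{wh ex thm} together with Assumption~\ref{ass app}. You have correctly identified both the structure and the main bookkeeping issue (the appearance of $2^{-k}|X(\Sigma)|^{-1}$ from the similitude Whittaker formula) without introducing any new ideas beyond those the paper itself uses.
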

%
%
%
%
\begin{Remark}
In our paper~\cite{FM3}, we prove \eqref{sim whittaker II} for an irreducible cuspidal automorphic representation $\Sigma^\prime$ of 
$\widetilde{\mathbb{G}}^-_2\simeq\mathrm{GU}\left(2,2\right)$ under some conditions.
Then we apply Theorem~\ref{sim Bessel II} to $\pi^\prime$,
the theta lift of  $\Sigma^\prime$ to 
$\widetilde{\mathbb{G}}_2$, 
and we obtain the formula \eqref{formula uni appendix}
for $\pi^\prime$.
\end{Remark}
\noindent
\textbf{Acknowledgement}
The authors would like to
thank Zhengyu Mao for kindly
pointing out an inaccuracy in the initial
proof of  Lemma~\ref{l: main lemma}.
The authors are tremendously grateful to the anonymous referee, 
who read the earlier version of the manuscript meticulously, 
provided numerous helpful comments and 
pointed out a number of  inaccuracies.
%
%
%
%
%
%
%
%
%
%
%
%
%
%
%
%
%
%
%
%
\section*{Declarations}

\noindent
\textbf{Competing interests}
On behalf of all authors, the corresponding author states that there is no conflict of interest.

\noindent
\textbf{Availability of data and material}
Not applicable.

\noindent
\textbf{Code availability} 
Not applicable.
%
%
%
%
%
%
%
%
%
%
%
%
%
%
%
%
%
%
%
%

%
%
%
%
%
%
%
%
%
%
%
%
%
%
%
%
%
%
%
%
\end{document}